\theoremstyle{thmstyleone}%
\newtheorem{theorem}{Theorem}%  meant for continuous numbers
\newtheorem{proposition}[theorem]{Proposition}%
\theoremstyle{thmstyletwo}%
\newtheorem{example}{Example}%
\newtheorem{remark}{Remark}%
\theoremstyle{thmstylethree}%
\newtheorem{definition}{Definition}%
\DeclareMathOperator*{\Limsup}{Lim\; sup}
\def\cl{\mbox{\rm cl}\,}
\def\ir{\mbox{\rm int}\,}
\def\bd{\mbox{\rm bd}\,}
\def\tbd{\mbox{\scriptsize {\rm bd}}}
\def\epi{\mbox{\rm epi}\,}
\def\tgph{\mbox{\scriptsize {\rm gph}}\,}
\def\gph{\mbox{\rm gph}\,}
\def\dom{\mbox{\rm dom}\,}
\def\cone{\mbox{\rm cone}\,}
\def \N{\mathbb{N}}
\def \R{\mathbb{R}}
\def \B{\mathbb{B}}
\begin{document}

\title[Formulas for calculating of generalized differentials wrt a set]{Formulas for calculating of generalized differentials with respect to a set and their applications}

%%=============================================================%%
%% Prefix	-> \pfx{Dr}
%% GivenName	-> \fnm{Joergen W.}
%% Particle	-> \spfx{van der} -> surname prefix
%% FamilyName	-> \sur{Ploeg}
%% Suffix	-> \sfx{IV}
%% NatureName	-> \tanm{Poet Laureate} -> Title after name
%% Degrees	-> \dgr{MSc, PhD}
%% \author*[1,2]{\pfx{Dr} \fnm{Joergen W.} \spfx{van der} \sur{Ploeg} \sfx{IV} \tanm{Poet Laureate}
%%                 \dgr{MSc, PhD}}\email{iauthor@gmail.com}
%%=============================================================%%

\author[1,2]{\fnm{Vo} \sur{Duc Thinh}}\email{vdthinh@dthu.edu.vn}

\author*[3,1]{\fnm{Xiaolong} \sur{Qin}}\email{qxlxajh@163.com}

\author[4]{\fnm{Jen-Chih} \sur{Yao}}\email{yaojc@math.nsysu.edu.tw}

\affil[1]{\orgdiv{School of Mathematical Sciences}, \orgname{Zhejiang Normal University}, \orgaddress{%\street{Street},
\city{Jinhua}, \postcode{321004},
\state{Zhejiang}, \country{China}}}

\affil[2]{\orgname{Dong Thap University}, \orgaddress{%\street{Street},
\city{Cao Lanh}, \postcode{81000},
\state{Dong Thap}, \country{Vietnam}}}

\affil[3]{\orgdiv{Center for Advanced Information Technology}, \orgname{Kyung Hee University}, \orgaddress{%\street{Street},
\city{Seoul},  \country{Korea}}}

\affil[4]{\orgdiv{Research Center for Interneural Computing}, \orgname{China Medical University Hospital}, \orgaddress{%\street{Street},
\city{Taichung}, \postcode{40402},
 \country{Taiwan}}}

%%==================================%%
%% sample for unstructured abstract %%
%%==================================%%

\abstract{This paper provides formulas for calculating of Fr\'{e}chet and limiting normal cones with respect to a set of sets and the limiting coderivative with respect to a set of set-valued mappings. These calculations are obtained under some qualification constraints and are expressed in the similar forms of these ones of Fr\'{e}chet and limiting normal cones and the limiting coderivative. By using these obtained formulas, we state explicit necessary optimality conditions with respect to a set for optimization problems with equilibrium constraints under certain qualification conditions. Some illustrated examples to obtained results are also established.}

\keywords{calculation rule; normal cone with respect to a set; coderivative with respect to a set; optimality condition.}

%%\pacs[JEL Classification]{D8, H51}

\pacs[Mathematics Subject Classification]{49J53, 90C30, 90C31}

\maketitle

\section{Introduction and Preliminaries}
The construction of fundamental calculation or estimation methods for generalized differentials is one of the main themes of variational analysis. These calculation formulas are crucial for achieving the existence, necessary and sufficient conditions, and stability criteria for the solutions to optimization problems and optimum control problems \cite{AF91,BM07, BGO2017,Bonnans2000,ChuongKim-MOR-2016,CLSW98,DR14,G13,GO16,GM11, GM13,Mor93, Mor94,Mor06,PR98,Roc98,TC18,TC19,Yen95}.  Ginchev and   Morudkhovich recently introduced, in \cite{GM11, GM13}, several types of generalized differentials, also known as generalized differentials with respect to a set. They are known as directionally generalized differentials when the referential set is singleton. The calculation rules, estimate formulas, and certain applications of these generalized differentials were described in \cite{GM11,GM13,TC19}.  Gfrererer introduced directionally generalized differentials in \cite{G13} whose  version is sharper than the directionally generalized differentials in \cite{GM11,GM13,TC19} and is still being investigated with numerous intriguing applications (see, e.g.,  \cite{BGO2017, GO16, TC18} and the references therein). In \cite{BGO2017}, the calculation formulas for these directionally generalized differentials were provided in great detail. Recently, the authors in \cite{MLYY23} introduced new generalized differentials, known as the projectional coderivative and projectional subdifferentials. Using projectional generalized differentials, the authors presented a generalized Mordukhovich criterion of the Lipschitz-like (resp. local Lipschitz continuity) with respect to a closed convex set for set-valued functions (resp. single-valued mappings) in \cite{MLYY23} while the authors in \cite{YY22} provided necessary and sufficient conditions for Lipschitz-like property with respect to a set of the solution mappings of parameterized generalized equations due to their projectional coderivatives. These results can not be obtained via directionally limiting coderivatives. Computation rules of the projectional coderivative of set-valued mappings were established in \cite{YMLY23}. However, defining projectional generalized differentials through the projection of limiting normal cones onto tangent cones of a set makes some calculation formulas of the projectional coderivative not represented in the same form as those of the limiting coderivatives (see \cite{YMLY23}). Moreover, projectional generalized differentials may be at a {\it disadvantage} when studying optimality conditions. Very recently, the authors in \cite{TQY01-ZJNU} were presented new types of generalized differentials with respect to a set due to normal cones with respect to a set. Generalized differentials with respect to a set in \cite{TQY01-ZJNU} includes the limiting coderivative with respect to a set, the proximal and limiting subdifferentials with respect to a set. In that work, the authors also provided a generalized Mordukhovich criterion of the Aubin property (Lipschitz-like property) with respect to a closed and convex set of set-valued mappings due to the limiting coderivative with respect to a set of those set-valued mappings. Besides, the local Lipschitz continuity of single-valued mappings and necessary optimality conditions for optimization problems with geometric constraints were also stated. In particular, also in \cite{TQY01-ZJNU}, examples, which  demonstrate  that the subdifferentials with respect to a set are sharper than the limiting subdifferential and the projectional subdifferential in the study of optimality conditions, were established.

One of main goals of this work is to provide necessary optimality conditions for the following optimization problems with equilibrium constraints and additional geometric constraints:
\begin{align}\label{mpecs}
    \min \quad & f(x)\tag{MPEC}\\
    \text{such that } & 0\in G(x) \text{ \rm and } x\in \mathcal C_1\cap\mathcal C_2,\nonumber
\end{align}
where $f(x):\R^n\to\bar \R:=\R\cup\{\infty\}$, $G:\R^n\rightrightarrows\R^m$, and $\mathcal C_i\subset \R^n$ for $i=1,2$ are nonempty, closed, and convex sets.

It is important to know that the optimality conditions for   \ref{mpecs} problem in Banach spaces have been sharply established in \cite{G13} due to the directionally generalized differentials in the case that  $\mathcal C_1=\mathcal C_2=\mathbb R^n$ while the set-valued optimization version of \eqref{mpecs} has been studied in detail in \cite{BM07}
via generalized differentials, and in \cite{TC18} due to directionally generalized differentials even without the convexity of $\Omega:=\mathcal C_1\cap\mathcal C_2$.
However, optimality conditions in those articles were stated in implicit forms. For example, in \cite{G13}, under some qualification conditions, the necessary optimality conditions of \eqref{mpecs} are expressed as
$$
0\in \partial f(\bar x;u)+D^*G((\bar x,0);(u,0))(z^*),
$$
where $\bar x$ is a local optimal solution to \eqref{mpecs}, $\partial f(\cdot; u)$ and $D^*(\cdot; (u,0))$ are generalized differentials in a critical direction $u$, and $z^*\in \mathbb R^m$ is not {\it explicitly known}.

In our work, thanks to generalized differentials with respect to a set in \cite{TQY01-ZJNU}, we  provide an {\it explicit form} of optimality conditions for \ref{mpecs} problem. In particular, in cases where $G$ satisfies the Aubin property with respect to $\mathcal C_2$, the necessary optimality condition for \ref{mpecs} problem will be represented in a {\it surprisingly simple form}. We must first establish computational rules for generalized differentials with respect to a set in order to achieve this purpose. In more detail, in this article, we present the following contents:
\begin{itemize}
    \item Recall and improve the concept of normal cones with respect to a set, the limiting coderivative and subdifferentials with respect to a set in \cite{TQY01-ZJNU}.

\item Provide computation rules of normal cones with respect to a set to sets.

\item Establish formulas for calculating of the limiting coderivative with respect to a set for set-valued mappings under weak qualification conditions.

\item Use above formulas to state explicitly necessary optimality conditions for \ref{mpecs} problem under certain qualification conditions.
\end{itemize}

Through out of this paper, we assume that all of considered spaces are finite dimension spaces with a norm $\Vert\cdot\Vert$ and the scalar product $\langle \cdot,\cdot\rangle.$ Let $\mathbb R^n, \mathbb R^m, \R^s$ be finite dimension spaces with norms $\Vert \cdot\Vert_{n}$, $\Vert\cdot\Vert_m$ and $\Vert\cdot\Vert_s$, respectively. The Cartesian product $\mathbb R^n\times \mathbb R^m$ is equipped a norm defined by $\Vert (x,y)\Vert:=\Vert \cdot\Vert_n +\Vert\cdot\Vert_m.$ We sometimes use $\Vert\cdot\Vert$ for some norm if there is no the confusion. The closed unit ball in $\R^s$ is signified by $\B$ while $\B(x,r)$ is used to denote the closed ball centered at $x$ with radius $r>0.$ We define $\R^s_+:=\left\{x=(x_1,\ldots, x_s)\in\R^s\mid x_i\ge 0\; \forall i=1,\ldots, s\right\}.$ Given a real numbers sequence $(t_k)$, we write $t_k\to 0^+$ if $t_k\to 0$ and $t_k\ge 0$ for all $k.$

Let $\Omega$ be a nonempty set in $\mathbb R^s$ and $\bar x\in\Omega$. We denote the {\it interior}, the {\it closure}, and the {\it boundary} of $\Omega$ respectively by ${\rm int}\, \Omega,$ $\cl \Omega$, and $\bd \Omega.$ We denote $\cone(\Omega):=\left\{\alpha x\mid \alpha\ge 0, x\in \Omega\right\}$ and $\mathcal R(\bar x,\Omega):=\left\{x^*\mid \exists p>0: \bar x+px^*\in \Omega\right\}.$
We use the notion $x_k\xrightarrow{\Omega}\bar x$ to say that $x_k\to \bar x$ and $x_k\in \Omega$ while $x_k\xrightarrow[x_k\ne \bar x]{\Omega}\bar x$ means that $x_k\xrightarrow{\Omega}\bar x$ and $x_k\ne \bar x$ for all $k\in \N$. Let $\R^s$ be equipped the Euclidean norm. The {\it distance function} to $\Omega$, $d_{\Omega}:\mathbb R^s\to \mathbb R,$ is defined by
$$
d_{\Omega}(x):=\inf_{u\in \Omega}\Vert u-x\Vert\; \text{ \rm for all } x\in \mathbb R^s.
$$
An element $u\in \Omega$ satisfying $d_{\Omega}(x)=\Vert u-x\Vert$ is called a (Euclidean) {\it projector} (or {\it closest point}) of $x$ onto $\Omega.$ The multifunction $\Pi(\cdot, \Omega): \mathbb R^s\rightrightarrows\mathbb R^s, x\mapsto \Pi(x,\Omega):=\left\{u\in \mathbb R^s\mid \Vert u-x \Vert=d_{\Omega}(x)\right\}$ is called the {\it set-valued mapping projection} onto $\Omega$, and the set $\Pi(x,\Omega)$ is called the {\it Euclidean projector set} of $x$ onto $\Omega.$ Note that   $\Pi(x,\Omega)$ can be empty, however, if $\Omega$ is closed, then $\Pi(x,\Omega)\ne \emptyset$ for any $x\in \mathbb R^s.$ Given $u\in \Omega,$ we define $\Pi^{-1}(u,\Omega):=\left\{x\in \mathbb R^s\mid u\in \Pi(x,\Omega) \right\}.$
The {\it proximal} and {\it Fréchet normal cones} to $\Omega$ at $\bar x\in\Omega$ are respectively  given (see \cite[Definition~1.1 and page 240]{Mor06}) by
$$
N^p(\bar x,\Omega):=\cone[\Pi^{-1}(\bar x,\Omega)-\bar x]
$$
and
$$
\hat N(\bar x,\Omega):=\left\{x^*\in \mathbb R^n\mid \limsup_{x\xrightarrow{\Omega}\bar x}\dfrac{\langle x^*, x-\bar x\rangle}{\Vert x-\bar x\Vert}\le 0\right\}.
$$
The {\it limiting normal cone} to $\Omega$ at $\bar x$ is defined by
$$
N(\bar x,\Omega):=\Limsup_{x\to\bar x}\Big(\cone[x-\Pi(x,\Omega)]\Big).
$$
\begin{remark}\label{rem1}
{\rm (i) It is known in \cite[page 240]{Mor06} that $N^p(\bar x,\Omega)\subset \hat N(\bar x,\Omega)$ and }
$$
N(\bar x,\Omega)=\Limsup_{x\to \bar x}\hat N(x,\Omega)=\Limsup_{x\to \bar x} N^p(x,\Omega).
$$

 {\rm (ii) We also have that $\hat N(\bar x,\Omega)$ is a closed convex cone while $N^{p}(\bar x ,\Omega)$ is a convex cone, but it is not closed (see \cite[page~240]{Mor06}) in general. Moreover, both $\hat N(\bar x,\Omega)$ and $N(\bar x,\Omega)$ are {\it invariant} with respect to any norm on $\R^n$ but $N^{p}(\bar x ,\Omega)$ heavily depends on the Euclidean norm, i.e., if we replace the Euclidean norm by another norm, then the proximal normal cones cannot be equal (see \cite[page~10]{Mor06}).}
\end{remark}

Let $f:\R^n\to\bar\R:=\R\cup\{\infty\}$ be an extended real-valued mapping. We respectively denote the domain and the epi-graph of $f$ by
$$
\dom f:=\left\{x\in\R^n\mid f(x)<\infty\right\} \text{ \rm and } \epi f:=\left\{(x,\alpha)\mid \alpha\ge f(x)\right\}.
$$
Given $\mathcal C\subset \R^n,$ we define $f_{\mathcal C}:\R^n\to\bar\R$ by $f_{\mathcal C}(x):=\begin{cases}f(x)&\text{ \rm if } x\in \mathcal C,\\
\infty &\text{ \rm otherwise.}\end{cases}$\\
The mapping $f$ is called {\it lower semi-continuous around} $\bar x$ if $\epi f$ is a locally closed set around $(\bar x,f(\bar x))$, that is, there exists $r_0>0$ such that, for any $r\in (0,r_0)$, $\epi f\cap \B((\bar x,f(\bar x)), r)$ is a closed set. In this case, we write $f\in \mathcal F(\bar x).$

Given a set-valued mapping $F:\mathbb R^n\rightrightarrows\mathbb R^m,$ we use
$$
\dom F:=\left\{x\in \R^n\mid F(x)\ne \emptyset\right\}\; \text{ \rm and } \gph F:=\left\{(x,y)\in\R^n\times\R^m\mid y\in F(x)\right\}
$$
to signify the {\it domain} and {\it graph} of $F.$ The {\it sequential Painlev\'{e}-Kuratowski upper/outer limit} of $F$ at $\bar x\in \dom F$ is given by
$$
\Limsup_{x\to\bar x}F(x):=\left\{y\in \R^m\mid \exists x_k\to \bar x, y_k\to y \text{ \rm with } y_k\in F(x_k) \; \text{ \rm for all } k\in \N\right\}.
$$

Let $\mathcal C\subset \R^n.$ We define the set-valued mapping $F_{\mathcal C}:\R^n\rightrightarrows\R^m$ as follows:
$$
F_{\mathcal C}(x):=\begin{cases}
F(x)& \text{ \rm if } x\in \mathcal C,\\
\emptyset & \text{ \rm otherwise.}
\end{cases}
$$
Then, we have
$$
\dom F_{\mathcal C}=\dom F\cap\mathcal C \; \text{ \rm and } \;  \gph F_{\mathcal C} = \gph F\cap(\mathcal C\times\R^m).
$$
The {\it epigraphical mapping} of $F$, denoted by $\mathcal E^F:\R^n\rightrightarrows\R^m,$ is defined by
$$
\mathcal E^F(x):=F(x)+\R^m_+\; \text{ \rm for all }x\in \R^n.
$$
If $F:\R^n\rightrightarrows\R$ is a single-valued mapping, i.e., $F(x)=\{f(x)\}$ for all $x\in \R^n$, then $\gph\mathcal E^F=\epi f:=\left\{(x,\alpha)\mid \alpha\ge f(x)\right\}.$ The set-valued mapping $F$ is called to be {\it inner semicompact} {\it with respect to} $\mathcal C$ at $\bar x$ if, for any $x_k\xrightarrow{\mathcal C}\bar x$, there exists a sequence $y_k\in F(x_k)$ that contains a convergent subsequence as $k\to \infty$ while we say that $F$ is {\it inner semicontinuous} {\it with respect to} $\mathcal C$ at $(\bar x,\bar y)$ if, for any sequence $x_{k}\xrightarrow{{\rm \tiny dom} F\cap\mathcal C}\bar x,$ there exists $y_k\in F(x_k)$ for each $k\in \N$ with $y_k\to \bar y.$ $F$ is called to be {\it closed-graph} {\it with respect to} $\mathcal C$ at $\bar x$ if $\bar y\in F(\bar x)$ whenever $x_k\xrightarrow{\mathcal C}\bar x, y_k\to \bar y$ with $y_k\in F(x_k)$.

The limiting corderivative and subdifferential of multifunctions were introduced in \cite{Roc98,Mor06,BM07} based on the use of the limiting normal cone for sets as follows.

\begin{definition} {\rm Let $F:\R^n \rightrightarrows\R^m$ and $(\bar x,\bar y)\in \gph F.$

{\rm (i)} \cite[Definition~3.32]{Mor06} The {\it limiting coderivative} (also known as the {\it normal coderivative}) of $F$ at $(\bar x,\bar y)$ is the multifunction $D^*F(\bar x,\bar y): \R^m\rightrightarrows\R^n$, which is defined by
$$
D^*F(\bar x,\bar y)(y^*):=\left\{x^*\in\R^n\mid (x^*,-y^*)\in N((\bar x,\bar y),\gph F)\right\} \; \text{ \rm for all } y^*\in \R^m.
$$

{\rm (ii)} \cite[Definition~2.1]{BM07} The {\it limiting subdifferential}  of $F$ at $(\bar x,\bar y)$ is given by
$$
\partial F(\bar x,\bar y):=\left\{x^*\in\R^n\mid (x^*,-y^*)\in N((\bar x,\bar y),\gph \mathcal E^F), y^*\in \R^m_+, \Vert y^*\Vert=1\right\}.
$$

If $F$ is the single-valued mapping $f$, then we use $D^*f(\bar x)$ and $\partial f(\bar x)$ instead of $D^*f(\bar x,f(\bar x))$ and $\partial f(\bar x,f(\bar x))$, respectively.
}
\end{definition}

\section{Normal Cones and Generalized Differentials with respect to a Set}

In this section, we present the relation to a set version of the limiting normal cone, the limiting coderivative, and the limiting subdifferential. It should be noted that the concept of the limiting normal cone with respect to a set has been introduced in \cite{TQY01-ZJNU} through the proximal normal cone with respect to a set which is presented as follows: let $\Omega, \mathcal C$ be nonempty sets of the Euclidean space $\R^s$, $\bar x\in \Omega\cap \mathcal C.$ The {\it proximal normal cone} to $\Omega$ at $\bar x$ {\it with respect to} $\mathcal C$ is defined by
\begin{align}
N^p_{\mathcal C}(\bar x,\Omega):=\big\{x^*\in \mathbb R^s\mid  \exists t>0 \text{ \rm such that }& \bar x+tx^*\in \Pi^{-1}(\bar x,\Omega\cap\mathcal C)\cap\mathcal C\big\}.
\end{align}
Observe that, in \cite{TQY01-ZJNU}, $N^p_{\mathcal C}(\bar x,\Omega)$ is defined through the Euclidean projection $\Pi$, which is heavily dependent on the Euclidean norm on $\R^s$. In the following definition, we   define the limiting normal cone with respect to a set through Fr\'{e}chet normal cone with respect to that set, which does not depend on equivalent norms.

\begin{definition}\label{normal-cone-wrt}
 {\rm Let $\Omega, \mathcal C\subset \mathbb R^s$ be nonempty sets, and $\bar x\in \Omega\cap \mathcal C.$ Suppose that $\mathcal C$ is convex and locally closed around $\bar x.$ Then

{\rm (i)} The {\it Fr\'{e}chet normal cone} to $\Omega$ at $\bar x$ {\it with respect to} $\mathcal C$ is defined by
\begin{align}\label{def-fre-normal-cone}
\hat N_{\mathcal C}(\bar x,\Omega):=\left\{x^*\in \R^s\mid \exists p>0: \bar x+px^*\in \mathcal C, \limsup\limits_{x\xrightarrow{\Omega\cap\mathcal C}\bar x}\dfrac{\langle x^*, x-\bar x\rangle}{\Vert x-\bar x\Vert}\le 0\right\}.
\end{align}

{\rm (ii)} The {\it limiting normal cone} to $\Omega$ at $\bar x$ {\it with respect to} $\mathcal C$ is defined by
 \begin{equation}\label{limiting-normal-cone}
 N_{\mathcal C}(\bar x,\Omega):=\Limsup_{x\xrightarrow{\Omega\cap\mathcal C}\bar x}\hat N_{\mathcal C}(x,\Omega).\end{equation}
 }
\end{definition}

\begin{remark}
    {\rm (i) It is easy to see from \eqref{def-fre-normal-cone} that $\hat N_{\mathcal C}(\bar x,\Omega)$ (and then $N_{\mathcal C}(\bar x,\Omega)$ also) does not depend on norms on $\R^s.$}

    {\rm (ii) We can see in Proposition~\ref{pro1} that, in Euclidean spaces, the limiting normal cone with respect to a set to a locally closed set is defined as in \eqref{limiting-normal-cone} and this one in \cite{TQY01-ZJNU} are coincident. We first recall some basic results on the proximal normal cone with respect to a set which were stated in \cite{TQY01-ZJNU}.}
\end{remark}

\begin{proposition}[\cite{TQY01-ZJNU}, Proposition~1]\label{pro0}
Let $\Omega$ be a nonempty set, and  let $\mathcal C$ be a nonempty, closed, and  convex set in the Euclidean space $\mathbb R^s.$ Suppose that $\Omega$ is locally closed around $\bar x$ for the given $\bar x\in \Omega\cap \mathcal C.$ Then the following assertions hold:

    {\rm (i)} $N^p_{\mathcal C}(\bar x,\Omega)=\left\{x^*\mid \exists p>0: \bar x+px^*\in \mathcal C, \langle x^*, x-\bar x\rangle\le \dfrac{1}{2p}\Vert x-\bar x\Vert^2 \; \forall x\in \Omega\cap\mathcal C\right\}.$

    {\rm (ii)} The proximal normal cone with respect to a set can be computed by
    \begin{equation}\label{prox-nor-cone-eq2}
    N^p_{\mathcal C}(\bar x,\Omega)=N^{p}(\bar x,\Omega\cap\mathcal C)\cap\mathcal R(\bar x,\mathcal C).\end{equation}
    Consequently,
    \begin{align}
    N^p_{\mathcal C}(\bar x,\Omega)=\Big\{x^*\mid &\exists \theta,\delta,p>0 \text{ \rm such that } \bar x+px^*\in \mathcal C\nonumber\\
    &\text{ \rm and } \langle x^*, x-\bar x\rangle\le \delta\Vert x-\bar x\Vert^2 \; \forall x\in \Omega\cap\mathcal C\cap\B(\bar x,\theta)\Big\}.
    \end{align}

    {\rm (iii)} $N^p_{\mathcal C}(\bar x,\Omega)$ is a convex cone.

    {\rm (iv)} $\Limsup\limits_{x\xrightarrow{\mathcal C\cap\Omega}\bar x}N^p_{\mathcal C}(\bar x, \Omega)= \Limsup\limits_{x\xrightarrow{\mathcal C}\bar x}\left(\cone[x-\Pi(x,\Omega\cap\mathcal C)]\right).$
    \end{proposition}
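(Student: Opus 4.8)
The plan is to make part~(ii) the structural hub, derive (i), (iii) and (iv) from it, and isolate at the outset one elementary fact that I will reuse throughout: \emph{if $u\in\Pi(z,S)$ for a set $S$, then $u\in\Pi(u+\lambda(z-u),S)$ for every $\lambda\in[0,1]$}, i.e. every point of the segment $[u,z]$ still has $u$ among its nearest points of $S$ (a one-line estimate from $\|z-v\|\ge\|z-u\|$ and the triangle inequality). For (i) I would just unfold the definition: $x^*\in N^p_{\mathcal C}(\bar x,\Omega)$ means there is $t>0$ with $\bar x+tx^*\in\mathcal C$ and $\|tx^*\|\le\|tx^*-(x-\bar x)\|$ for all $x\in\Omega\cap\mathcal C$; squaring this inequality and dividing by $2t>0$ turns it into $\langle x^*,x-\bar x\rangle\le\tfrac1{2t}\|x-\bar x\|^2$, so $p=t$ gives ``$\subseteq$'' and the same computation run backwards gives ``$\supseteq$''.

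For (ii), in the direction ``$\supseteq$'' I would take $x^*\in N^p(\bar x,\Omega\cap\mathcal C)\cap\mathcal R(\bar x,\mathcal C)$, write $x^*=\alpha(z-\bar x)$ with $\alpha\ge0$ and $\bar x\in\Pi(z,\Omega\cap\mathcal C)$, and pick $q>0$ with $\bar x+qx^*\in\mathcal C$; since $\bar x\in\mathcal C$ and $\mathcal C$ is convex, $\bar x+tx^*\in\mathcal C$ for all $t\in[0,q]$, and choosing $t\in(0,q]$ with $t\alpha\le1$ puts $\bar x+tx^*$ on $[\bar x,z]$, so the projection fact gives $\bar x\in\Pi(\bar x+tx^*,\Omega\cap\mathcal C)$ and hence $x^*\in N^p_{\mathcal C}(\bar x,\Omega)$ (the case $x^*=0$ is trivial). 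For ``$\subseteq$'', if $\bar x+tx^*\in\mathcal C$ and $\bar x\in\Pi(\bar x+tx^*,\Omega\cap\mathcal C)$, then $\bar x+tx^*\in\Pi^{-1}(\bar x,\Omega\cap\mathcal C)\cap\mathcal C$, so $x^*=\tfrac1t[(\bar x+tx^*)-\bar x]$ lies in $\cone[\Pi^{-1}(\bar x,\Omega\cap\mathcal C)-\bar x]=N^p(\bar x,\Omega\cap\mathcal C)$ and in $\mathcal R(\bar x,\mathcal C)$. The ``consequently'' form then follows by intersecting this identity with the standard local quadratic characterization of $N^p(\bar x,\Omega\cap\mathcal C)$ recalled from \cite{Mor06}, convexity of $\mathcal C$ again being what lets one merge the separate parameters $\theta,\delta,p$ into a single one where needed.

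Part (iii) is then immediate: by (ii) $N^p_{\mathcal C}(\bar x,\Omega)$ is the intersection of the convex cone $N^p(\bar x,\Omega\cap\mathcal C)$ (Remark~\ref{rem1}(ii)) with $\mathcal R(\bar x,\mathcal C)=\cone(\mathcal C-\bar x)$, which is a convex cone because $\mathcal C-\bar x$ is convex. For (iv) I would substitute $N^p_{\mathcal C}(x,\Omega)=N^p(x,\Omega\cap\mathcal C)\cap\mathcal R(x,\mathcal C)$ and prove the two inclusions between the outer limits. For ``$\subseteq$'', given $x^*_k\to x^*$ with $x^*_k\in N^p_{\mathcal C}(x_k,\Omega)$ and $x_k\xrightarrow{\Omega\cap\mathcal C}\bar x$, write $x^*_k=\alpha_k(z_k-x_k)$ with $x_k\in\Pi(z_k,\Omega\cap\mathcal C)$, pick $p_k>0$ with $x_k+p_kx^*_k\in\mathcal C$, and replace $z_k$ by $w_k:=x_k+s_kx^*_k$ with $s_k\downarrow0$ chosen so small that $s_k\alpha_k\le1$ and $s_k\le p_k$; then $w_k\in\mathcal C$ (convexity), $x_k\in\Pi(w_k,\Omega\cap\mathcal C)$ (projection fact), $w_k\to\bar x$, and $x^*_k\in\cone[w_k-\Pi(w_k,\Omega\cap\mathcal C)]$. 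For ``$\supseteq$'', given $x^*_k\to x^*$ with $x^*_k\in\cone[x_k-\Pi(x_k,\Omega\cap\mathcal C)]$ and $x_k\xrightarrow{\mathcal C}\bar x$, write $x^*_k=\beta_k(x_k-u_k)$ with $u_k\in\Pi(x_k,\Omega\cap\mathcal C)$; then $\|x_k-u_k\|=d_{\Omega\cap\mathcal C}(x_k)\le\|x_k-\bar x\|\to0$ forces $u_k\xrightarrow{\Omega\cap\mathcal C}\bar x$, the whole segment $[u_k,x_k]$ lies in the convex set $\mathcal C$ (both endpoints do), so $x^*_k\in N^p(u_k,\Omega\cap\mathcal C)\cap\mathcal R(u_k,\mathcal C)=N^p_{\mathcal C}(u_k,\Omega)$ by (ii).

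The algebra in (i) and the scalar bookkeeping are routine; the genuinely delicate point is the ``$\subseteq$'' half of (iv), where the auxiliary point $w_k$ must at once stay in $\mathcal C$, keep $x_k$ as a nearest point of $\Omega\cap\mathcal C$, and converge to $\bar x$ — it is precisely the convexity of $\mathcal C$ (first requirement), the local closedness of $\Omega$ and $\mathcal C$ near $\bar x$ (so the projector sets that appear are nonempty), and the segment property of Euclidean projections (second requirement) that make a common small choice of $s_k$ possible. I would therefore record that segment property as a standalone lemma at the start and invoke it uniformly in (i), (ii) and (iv).
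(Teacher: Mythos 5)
There is nothing to compare against inside the paper itself: Proposition~\ref{pro0} is stated as a result recalled from \cite{TQY01-ZJNU}, and no proof of it is reproduced here. Judged on its own merits, your argument is correct and complete in outline. The segment lemma (a nearest point $u$ of $S$ to $z$ remains a nearest point of $S$ to every point of the segment $[u,z]$) is exactly the right standalone tool: it yields the ``$\supseteq$'' half of (ii) after rescaling $x^*$ into the segment and using convexity of $\mathcal C$ to keep $\bar x+tx^*$ inside $\mathcal C$, and in the ``$\subseteq$'' half of (iv) it makes your auxiliary points $w_k=x_k+s_kx^*_k$ simultaneously lie in $\mathcal C$, retain $x_k$ as a projector of $w_k$ onto $\Omega\cap\mathcal C$, and converge to $\bar x$ once $s_k\le\min\{p_k,1/\alpha_k\}$ and $s_k\to0^+$; parts (i) and (iii) are indeed routine given (ii), Remark~\ref{rem1} and the identity $\mathcal R(\bar x,\mathcal C)=\cone(\mathcal C-\bar x)$. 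Two details are worth writing out rather than leaving implicit: in the ``consequently'' formula of (ii), the return from the local quadratic bound on $\Omega\cap\mathcal C\cap\B(\bar x,\theta)$ to the global bound of (i) uses the estimate $\langle x^*,x-\bar x\rangle\le(\Vert x^*\Vert/\theta)\Vert x-\bar x\Vert^2$ for $x$ outside the ball together with a decrease of $p$ (legitimate precisely because $\bar x\in\mathcal C$ and $\mathcal C$ is convex), as you indicate only in passing; and in (iv) you silently correct a typo in the statement by reading the left-hand side as the outer limit of $N^p_{\mathcal C}(x,\Omega)$ as $x\xrightarrow{\Omega\cap\mathcal C}\bar x$, which is the reading the paper itself uses when it invokes (iv) in the proof of Proposition~\ref{pro1}(iv). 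Your technique of sliding along segments into $\mathcal C$ and projecting is the same kind of argument the paper does display in that later proof, so the proposal is fully in the spirit of the source it replaces.
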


We next provide several foundational properties of the Fr\'{e}chet and limiting normal cones with respect to a set.

\begin{proposition}\label{pro1}
Let $\Omega$ be a nonempty set, and let $\mathcal C$ be a nonempty, closed, and  convex set in $\mathbb R^s.$ Then, for each $\bar x\in \Omega\cap \mathcal C,$ the following assertions hold:

    {\rm (i)} $\hat N_{\mathcal C}(\bar x,\Omega)=\hat N(\bar x,\Omega\cap\mathcal C)\cap \mathcal R(\bar x,\mathcal C).$

    In addition, if $\Omega$ is convex, then
    \begin{align}
\hat N_{\mathcal C}(\bar x,\Omega)=\Big\{x^*\in \R^s\mid  \exists p>0: \bar x+px^*\in \mathcal C, \langle x^*, x-\bar x\rangle\le 0 \; \forall x\in \Omega\cap\mathcal C\Big\}.\label{formula-prox-cone-2}
\end{align}

    {\rm (ii)} $\hat N_{\mathcal C}(\bar x,\Omega)$ is a convex cone.

    {\rm (iii)} $N_{\mathcal C}(\bar x,\Omega)$ is a closed cone.

    {\rm (iv)} If $\R^s$ is an Euclidean space and $\Omega$ is locally closed around $\bar x$ then $$N_{\mathcal C}(\bar x, \Omega)= \Limsup\limits_{x\xrightarrow{\mathcal C\cap\Omega}\bar x}N^p_{\mathcal C}(\bar x, \Omega)= \Limsup\limits_{x\xrightarrow{\mathcal C}\bar x}\left(\cone[x-\Pi(x,\Omega\cap\mathcal C)]\right).$$
    \end{proposition}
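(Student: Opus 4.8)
The plan is to obtain (i) and its convex refinement straight from Definition~\ref{normal-cone-wrt}, to deduce (ii)--(iii) from (i) together with the generic behaviour of Painlev\'e--Kuratowski outer limits, and to spend the real effort on the first equality in (iv).

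\emph{Items (i)--(iii).} For (i) I would just read off \eqref{def-fre-normal-cone}: the clause ``$\exists p>0:\bar x+px^*\in\mathcal C$'' says precisely $x^*\in\mathcal R(\bar x,\mathcal C)$, and the $\limsup$ clause says precisely $x^*\in\hat N(\bar x,\Omega\cap\mathcal C)$, so $\hat N_{\mathcal C}(\bar x,\Omega)=\hat N(\bar x,\Omega\cap\mathcal C)\cap\mathcal R(\bar x,\mathcal C)$. When $\Omega$ is convex, $\Omega\cap\mathcal C$ is convex and $\hat N(\bar x,\Omega\cap\mathcal C)=\{x^*\mid\langle x^*,x-\bar x\rangle\le 0\ \forall x\in\Omega\cap\mathcal C\}$: ``$\supseteq$'' is clear, and for ``$\subseteq$'' one evaluates the defining $\limsup$ along the segment $\bar x+\tau(x-\bar x)\in\Omega\cap\mathcal C$, $\tau\to 0^+$, to get $\langle x^*,x-\bar x\rangle\le 0$; substituting yields \eqref{formula-prox-cone-2}. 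For (ii), for convex $\mathcal C$ one has $\mathcal R(\bar x,\mathcal C)=\cone(\mathcal C-\bar x)$, which is a convex cone, and $\hat N(\bar x,\Omega\cap\mathcal C)$ is a convex cone by Remark~\ref{rem1}(ii); hence by (i) $\hat N_{\mathcal C}(\bar x,\Omega)$ is an intersection of convex cones. For (iii), $N_{\mathcal C}(\bar x,\Omega)$ is closed because every outer limit is closed (a routine diagonal extraction), and it is a cone because each $\hat N_{\mathcal C}(x,\Omega)$ is a cone by (ii): scaling the generating sequences gives $\lambda N_{\mathcal C}(\bar x,\Omega)\subseteq N_{\mathcal C}(\bar x,\Omega)$ for $\lambda>0$, and the constant sequence $x\equiv\bar x$ with $0\in\hat N_{\mathcal C}(\bar x,\Omega)$ gives $0\in N_{\mathcal C}(\bar x,\Omega)$.

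\emph{Item (iv).} The second equality is exactly Proposition~\ref{pro0}(iv), so it suffices to prove
$$
\Limsup\limits_{x\xrightarrow{\mathcal C\cap\Omega}\bar x}N^p_{\mathcal C}(x,\Omega)\ \subseteq\ N_{\mathcal C}(\bar x,\Omega)\ \subseteq\ \Limsup\limits_{x\xrightarrow{\mathcal C}\bar x}\big(\cone[x-\Pi(x,\Omega\cap\mathcal C)]\big),
$$
since Proposition~\ref{pro0}(iv) then collapses all three sets. The left inclusion is immediate: by Proposition~\ref{pro0}(ii), the inclusion $N^p\subseteq\hat N$ from Remark~\ref{rem1}(i), and (i), $N^p_{\mathcal C}(x,\Omega)=N^p(x,\Omega\cap\mathcal C)\cap\mathcal R(x,\mathcal C)\subseteq\hat N(x,\Omega\cap\mathcal C)\cap\mathcal R(x,\mathcal C)=\hat N_{\mathcal C}(x,\Omega)$, so passing to outer limits and using \eqref{limiting-normal-cone} gives the inclusion. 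The right inclusion is the substantive part. Take $x^*\in N_{\mathcal C}(\bar x,\Omega)$ and pick $x_k\xrightarrow{\Omega\cap\mathcal C}\bar x$, $x_k^*\to x^*$ with $x_k^*\in\hat N_{\mathcal C}(x_k,\Omega)$; by (i) each $x_k^*$ lies in $\hat N(x_k,\Omega\cap\mathcal C)$ and there is $p_k>0$ with $x_k+p_kx_k^*\in\mathcal C$. For $t\in(0,p_k]$ put $x_{k,t}:=x_k+tx_k^*$, which lies in $\mathcal C$ by convexity, and (for $k$ large $\Omega\cap\mathcal C$ is locally closed near $x_{k,t}$, so) choose $u_{k,t}\in\Pi(x_{k,t},\Omega\cap\mathcal C)$. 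Since $x_k\in\Omega\cap\mathcal C$, the projection inequality gives $\Vert x_{k,t}-u_{k,t}\Vert\le t\Vert x_k^*\Vert$, hence $u_{k,t}\to x_k$ as $t\to 0^+$, and expanding $\Vert x_{k,t}-u_{k,t}\Vert^2\le\Vert x_{k,t}-x_k\Vert^2$ in the Euclidean inner product gives $\Vert x_k-u_{k,t}\Vert^2\le 2t\langle x_k^*,u_{k,t}-x_k\rangle$. In particular $\eta_{k,t}:=\langle x_k^*,u_{k,t}-x_k\rangle/\Vert u_{k,t}-x_k\Vert$ (read as $0$ when $u_{k,t}=x_k$) is $\ge 0$; since $x_k^*\in\hat N(x_k,\Omega\cap\mathcal C)$ and $u_{k,t}\xrightarrow{\Omega\cap\mathcal C}x_k$, its $\limsup$ as $t\to 0^+$ is $\le 0$, so $\eta_{k,t}\to 0$, and the same inequality gives $t^{-1}\Vert x_k-u_{k,t}\Vert\le 2\eta_{k,t}\to 0$. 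Therefore $v_{k,t}:=t^{-1}(x_{k,t}-u_{k,t})=x_k^*+t^{-1}(x_k-u_{k,t})$ lies in $\cone[x_{k,t}-\Pi(x_{k,t},\Omega\cap\mathcal C)]$ and tends to $x_k^*$ as $t\to 0^+$. Choosing, for each large $k$, a number $t_k\in(0,p_k]$ with $t_k\Vert x_k^*\Vert<1/k$ and $\eta_{k,t_k}<1/k$, we get $x_{k,t_k}\in\mathcal C$, $x_{k,t_k}\to\bar x$, and $v_{k,t_k}\to x^*$ with $v_{k,t_k}\in\cone[x_{k,t_k}-\Pi(x_{k,t_k},\Omega\cap\mathcal C)]$, i.e., $x^*\in\Limsup_{x\xrightarrow{\mathcal C}\bar x}(\cone[x-\Pi(x,\Omega\cap\mathcal C)])$.

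I expect the right inclusion in (iv) to be the only genuine obstacle. Its heart is the use of $x_k^*\in\mathcal R(x_k,\mathcal C)$: this membership is exactly what licenses perturbing \emph{along the normal direction $x_k^*$ itself while staying inside $\mathcal C$}, and only after this perturbation do the Euclidean projection estimate and the Fr\'echet-normal inequality combine to force the projection residual $x_{k,t}-u_{k,t}$ to line up with $x_k^*$ in the limit. The remaining pieces---the definitional steps in (i), the convex-cone bookkeeping in (ii)--(iii), and the $N^p\subseteq\hat N$ inclusion and diagonal extraction in (iv)---are routine.
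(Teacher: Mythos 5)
Your proposal is correct and follows essentially the paper's route: items (i)--(iii) are handled the same way, and in (iv) your key computation (perturb along $x_k^*$ into $\mathcal C$ via the $\mathcal R(x_k,\mathcal C)$ membership and convexity, project onto $\Omega\cap\mathcal C$, and combine the Euclidean projection estimate with the Fr\'echet normal inequality) is exactly the paper's argument. The only difference is cosmetic: the paper aims the hard inclusion at $\Limsup_{x\xrightarrow{\mathcal C\cap\Omega}\bar x}N^p_{\mathcal C}(x,\Omega)$, constructing proximal normals $w_k^*$ at the projected points and verifying the proximal inequality there, whereas you aim directly at $\Limsup_{x\xrightarrow{\mathcal C}\bar x}\bigl(\cone[x-\Pi(x,\Omega\cap\mathcal C)]\bigr)$ and then close the loop with Proposition~\ref{pro0}(iv), which both arguments invoke anyway.
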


\begin{proof} {\rm (i)} It directly implies from the definition and the property of Fr\'{e}chet normal cones to a convex set.

{\rm (ii)} It is easy to see that $\hat N_{\mathcal C}(\bar x,\Omega)$ is a cone. We need to prove the convexity of $\hat N_{\mathcal C}(\bar x,\Omega)$. Taking $x_1^*,x_2^*\in \hat N_{\mathcal C}(\bar x,\Omega)$ and $\lambda\in (0,1),$ we need to obtain that $x^*:=\lambda x_1^*+(1-\lambda)x_2^*\in \hat N_{\mathcal C}(\bar x,\Omega).$ Indeed, there exist, by (i), $p_1,p_2>0$ such that the following inequalities hold, for all $x\in \mathcal C\cap \Omega,$
\begin{equation*}
 \dfrac{\langle x_i^*,x-\bar x\rangle}{\Vert x-\bar x\Vert}\le 0
\end{equation*}
for $i=1,2.$ These respectively imply that
\begin{equation}\label{pro1-eq1}
\dfrac{\langle \lambda x_1^*,x-\bar x\rangle}{\Vert x-\bar x\Vert}\le 0
\end{equation}
and
\begin{equation}\label{pro1-eq2}
\dfrac{\langle (1-\lambda) x_2^*,x-\bar x\rangle}{\Vert x-\bar x\Vert}\le 0.
\end{equation}
Combining \eqref{pro1-eq1} and \eqref{pro1-eq2}, we obtain
\begin{equation*}
\dfrac{\langle x^*,x-\bar x\rangle}{\Vert x-\bar x\Vert}\le 0.
\end{equation*}
Moreover, since $\mathcal R(\bar x,\mathcal C)$ is convex, and $x^*\in \mathcal R(\bar x, \mathcal C)$, then $x^*\in\hat N_{\mathcal C}(\bar x,\Omega).$ Hence, $\hat N_{\mathcal C}(\bar x,\Omega)$ is convex.

{\rm (iii)} It is clear to see that $N_{\mathcal C}(\bar x,\Omega)$ is a cone because of the conical property of $\hat N_{\mathcal C}(\bar x,\Omega).$ Now, we prove that $N_{\mathcal C}(\bar x,\Omega)$ is closed. Taking a sequence $(x_k^*)\subset N_{\mathcal C}(\bar x,\Omega)$ satisfying $x_k^*\to x^*,$ we next justify that $x^*\in N_{\mathcal C}(\bar x,\Omega)$. Indeed, for each $k\in \N,$ there exist sequences $x_{k_m}\xrightarrow{\Omega\cap\mathcal C} x_k, x_{k_m}^*\to x_k^*$ such that $x_{k_m}^*\in \hat N_{\mathcal C}(x_{k_m},\Omega).$ For each $k\in \N$, we pick $\tilde x_k:=x_{k_k}$ and $\tilde x_k^*:=x_{k_k}^*.$ Then $\tilde x_{k}\xrightarrow{\Omega\cap\mathcal C} \bar x$, $\tilde x_k^*\to x^*$ and $\tilde x_k^*\in \hat N_{\mathcal C}(\tilde x_{k},\Omega).$ This gives us that $x^*\in N_{\mathcal C}(\bar x,\Omega)$ and hence $N_{\mathcal C}(\bar x,\Omega)$ is closed.

{\rm (iv)} By Proposition~\ref{pro0}~(iv), it is sufficiently to prove
$$
N_{\mathcal C}(\bar x,\Omega)=\Limsup_{x\xrightarrow{\mathcal C\cap\Omega}\bar x}N^p_{\mathcal C}(x,\Omega).
$$
From (i) and Proposition~\ref{pro0}~(ii), we obtain $N^p_{\mathcal C}(x,\Omega)\subset \hat N_{\mathcal C}(x,\Omega)$ for all $x\in\mathcal C\cap\Omega$. Therefore,
$$
\Limsup_{x\xrightarrow{\mathcal C\cap\Omega}\bar x}N^p_{\mathcal C}(x,\Omega)\subset \Limsup_{x\xrightarrow{\mathcal C\cap\Omega}\bar x}\hat N_{\mathcal C}(x,\Omega)=N_{\mathcal C}(\bar x,\Omega).
$$
It remains to demonstrate the opposite inclusion. Let $x^*\in N_{\mathcal C}(\bar x, \Omega).$ There are, by the definition, sequences $x_k\xrightarrow{\mathcal C\cap \Omega} \bar x$ and $x_k^*\to x^*$ such that $x_k^*\in \hat N_{\mathcal C}(x_k,\Omega)$ for all $k\in \mathbb N.$ For each $k\in \N,$ there exists $p_k>0$ such that $x_k+p_kx_k^*\in\mathcal C$. Picking $0<\alpha_k<p_k$ with $\alpha_k\to 0$, we obtain
$$
x_k+\alpha_k x_k^*=(1-\frac{\alpha_k}{p_k})x_k+\frac{\alpha_k}{p_k}(x_k+p_k x_k^*)\in \mathcal C
$$
due to the convexity of $\mathcal C$. We take $w_k\in \Pi(x_k+\alpha_kx_k^*,\Omega\cap\mathcal C).$ Then one has the following assertion
$$
\Vert w_k-x_k\Vert^2-2\alpha_k\langle x_k^*, w_k-x_k\rangle+\alpha_k^2\Vert x_k^*\Vert^2=\Vert w_k-x_k-\alpha_k x_k^*\Vert^2\le \alpha_k^2\Vert x_k^*\Vert^2
$$
which is equivalent to
\begin{equation}\label{pro1-eq00}
    \Vert w_k-x_k\Vert^2\le 2\alpha_k\langle x_k^*, w_k-x_k\rangle.
\end{equation}
Take an arbitrarily $\epsilon_k\to 0^+.$ Since $x_k^*\in \hat N_{\mathcal C}(x_k,\Omega\cap\mathcal C),$ one sees that there exists, by the definition, $\delta_k>0$ such that
\begin{equation}\label{pro1-eq01}
    \langle x_k^*, x-x_k\rangle\le \epsilon_k\Vert x-x_k\Vert\; \forall x\in \Omega\cap\mathcal C\cap\B(x_k,\delta_k).
\end{equation}
Without loss of generality, we can assume that $\alpha_k\Vert x_k^*\Vert<\frac{1}{2}\delta_k$. It implies from $w_k\in \Pi(x_k+\alpha_kx_k^*,\Omega\cap\mathcal C)$ that $w_k\in \Omega\cap\mathcal C$ and
$$
\Vert w_k-x_k-\alpha_kx_k^*\Vert\le\Vert x_k-x_k-\alpha_kx_k^*\Vert = \alpha_k\Vert x_k^*\Vert.$$ This follows that $$\Vert w_k-x_k\Vert\le 2\alpha_k\Vert x_k^*\Vert<\delta_k
$$
and so $w_k\in \B(x_k,\delta_k).$ Taking \eqref{pro1-eq01} into account, we arrive at
$$
\langle x_k^*,w_k-x_k\rangle\le \epsilon_k \Vert w_k-x_k\Vert.
$$
Combining this and \eqref{pro1-eq00}, we have $\Vert w_k-x_k\Vert\le 2\epsilon_k\alpha_k$ for all $k\in\N.$ For each $k\in \N,$ we set $w_k^*:=x_k^*+\frac{1}{\alpha_k}(x_k-w_k).$ Then $w_k+\alpha_kw_k^*=x_k+\alpha_k^*\in\mathcal C$ and
$$
\Vert w_k^*-x^*\Vert \le \Vert x_k^*-x^*\Vert+\Vert w_k^*-x_k^*\Vert = \Vert x_k^*-x^*\Vert+\frac{1}{\alpha_k}\Vert x_k-w_k\Vert\le \Vert x_k^*-x^*\Vert+2\epsilon_k.
$$
Hence $w_k^*\to x^*$ as $k\to \infty.$ On the other hand, for any $x\in \Omega\cap\mathcal C,$ we have
\begin{align}
    0&\le \Vert x_k+\alpha_k x_k^*-x\Vert^2-\Vert x_k+\alpha_k x_k^*-w_k\Vert^2\\
    &=2\langle x_k+\alpha_k x_k^*-w_k, w_k-x\rangle+\Vert w_k-x\Vert^2\\
    &=2\alpha_k\langle w_k^*, w_k-x \rangle+\Vert w_k-x\Vert^2,
\end{align}
which implies that
$$
\langle w_k^*, x-w_k \rangle\le \dfrac{1}{2\alpha_k}\Vert w_k-x\Vert^2\;\forall x\in\mathcal C\cap\Omega.$$ It results that $w_k^*\in N^p_{\mathcal C}(w_k,\Omega)$ due to $w_k+\alpha_k w_k^*\in\mathcal C.$ It  follows that $x^*\in \Limsup\limits_{x\xrightarrow{\Omega\cap\mathcal C}\bar x}N^p_{\mathcal C}(x,\Omega\cap\mathcal C).$ Therefore,
$$
N_{\mathcal C}(\bar x,\Omega)\subset \Limsup\limits_{x\xrightarrow{\Omega\cap\mathcal C}\bar x}N^p_{\mathcal C}(x,\Omega\cap\mathcal C).
$$
Hence, the proof is completed.
\end{proof}

We now recall the concept and some basic properties of variational analysis with respect to a set.

\begin{definition}[\cite{TQY01-ZJNU}]\label{coderivative-wrt}
 {\rm   Give a nonempty closed set $\mathcal C\subset \mathbb R^n$ and $\bar x\in \mathcal C$, suppose that  $F:\mathbb R^n\rightrightarrows \R^m$ has a locally closed graph and $(\bar x,\bar y)\in \gph F.$ The {\it limiting coderivative with respect to} $\mathcal C$ of $F$ at $(\bar x,\bar y)$ is a multifunction $D_{\mathcal C}^*F(\bar x,\bar y):\mathbb R^m\rightrightarrows \mathbb R^n$ defined by
$$
D^*_{\mathcal C}F(\bar x,\bar y)(y^*) = \left\{ x^*\in\mathbb R^n\mid (x^*,-y^*)\in N_{\mathcal C\times \mathbb R^m}((\bar x,\bar y),\gph F_{\mathcal C})\right\}\; \forall y^*\in \mathbb R^m.
$$
In the case of $\mathcal C=\dom F,$ we use $\bar D^*F(\bar x,\bar y)$   instead of $D^*_{\mathcal C}F(\bar x,\bar y)$ and call the {\it relative limiting coderivative} of $F$ at $(\bar x,\bar y).$  }
    \end{definition}

\begin{definition}[\cite{Roc98}, Definition~9.36]
{\rm Let $\mathcal C$ be a nonempty and  closed subset of $\mathbb R^n$, and let $F:\mathbb R^n\rightrightarrows \R^m$. Then $F$ is said  to have the {\it Aubin property with respect to} $\mathcal C$ around $(\bar x,\bar y)\in \gph F$ if $\gph F$ is locally closed around $(\bar x,\bar y)$ and there exist $\kappa\ge 0$ and neighborhoods $V$ of $\bar y$ and $U$ of $\bar x$ such that
\begin{equation}\label{Aubin-pro}
    F(u)\cap V\subset F(x) +\kappa\Vert u-x\Vert \mathbb B\quad  \forall x,u\in \mathcal C\cap U.
\end{equation}
In the case that \eqref{Aubin-pro} holds for $\mathcal C=\dom F$, we say that $F$ has the {\it relative Aubin property} around $(\bar x,\bar y).$
}
\end{definition}

%We now introduce the concept of local Lipschitz continuity with respect to set for single-valued functions as follows.

\begin{definition}[\cite{Roc98}, Definition~9.1~(b)] \label{lips-vec}
{\rm Let $\mathcal C$ be a nonempty and closed subset of $\mathbb R^n.$ Let $f:\mathbb R^n\to\bar\R$ and $\bar x\in \dom f.$ Then $f$ is said  to be {\it locally Lipschitz continuous with respect to} $\mathcal C$ around $\bar x$ if
\begin{equation}\label{lips-con}
{\rm lip}_{\mathcal C}f(\bar x):=\limsup_{x,u\xrightarrow[x\ne u]{\mathcal C}\bar x}\dfrac{\vert f(x)-f(u)\vert}{\Vert x-u\Vert}< \infty.
\end{equation}
Here ${\rm lip}_{\mathcal C}f(\bar x)$ is called the {\it exact Lipschitzian constant with respect to}  $\mathcal C$ of $f$ at $\bar x.$ If \eqref{lips-con} holds with $\mathcal C=\dom f$, then $f$ is said  to be {\it relatively locally Lipschitz continuous} around $\bar x.$
}
\end{definition}

\begin{remark}  It is known in \cite{TQY01-ZJNU} that the relatively local Lipschitz continuity of a scalar mapping is different to its local Lipschitz continuity. Similarly, the relatively Aubin property of a multifunction is also different to its Aubin property.
\end{remark}

It is easy to see that the Aubin property with respect to a set of a set-valued mapping does not depend on equivalent norms of a finite dimension space. So, from Proposition~\ref{pro1} and  \cite[Corollary~4]{TQY01-ZJNU}, we have the following theorem.

\begin{theorem}[A version with respect to a set of the Mordukhovich criterion] \label{thm1}
Let $\mathcal C$ be a closed and convex subset of $\mathbb R^n$. Let $F:\mathbb R^n\rightrightarrows\mathbb R^m$, $\bar x\in \mathcal C$, and $\bar y\in F(\bar x).$ Assume that $F$ has a locally closed graph around $(\bar x,\bar y)$. Then $F$ has the Aubin property with respect to $\mathcal C$ around $(\bar x,\bar y)$ if and only if
\begin{equation}\label{thm1-eqa}
D^*_{\mathcal C}F(\bar x,\bar y)(0)=\{0\}.
\end{equation}
\end{theorem}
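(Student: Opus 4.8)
The plan is to reduce this criterion to the one already established in \cite[Corollary~4]{TQY01-ZJNU}, exploiting the fact that both sides of the asserted equivalence are insensitive to the choice of equivalent norm, so that the reduction can be carried out in the Euclidean setting, where the limiting coderivative of Definition~\ref{coderivative-wrt} is known to coincide with the proximal-normal-cone based object of \cite{TQY01-ZJNU}. No new estimates are needed: the analytic content is already contained in Proposition~\ref{pro1}(iv) and in \cite[Corollary~4]{TQY01-ZJNU}.

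First I would record the two invariance facts. On one hand, whether $F$ has the Aubin property with respect to $\mathcal C$ around $(\bar x,\bar y)$ is a metric property of $F$ relative to $\mathcal C$ that does not depend on the chosen norm of the (finite-dimensional) spaces: replacing $\Vert\cdot\Vert$ by an equivalent norm only rescales the modulus $\kappa$ in \eqref{Aubin-pro}. On the other hand, the Fr\'{e}chet normal cone with respect to a set in \eqref{def-fre-normal-cone}, hence its outer limit \eqref{limiting-normal-cone} and the coderivative $D^*_{\mathcal C}F$ built from it, is manifestly norm-independent (see the remark following Definition~\ref{normal-cone-wrt}); thus the condition \eqref{thm1-eqa} is likewise norm-independent. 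Consequently it is enough to prove the equivalence under the assumption that $\R^n$ and $\R^m$ (and therefore $\R^n\times\R^m$) carry the Euclidean norm.

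Second, working in that Euclidean setting, I would identify the coderivative. Since $\gph F_{\mathcal C}=\gph F\cap(\mathcal C\times\R^m)$, we have $N_{\mathcal C\times\R^m}((\bar x,\bar y),\gph F_{\mathcal C})=N_{\mathcal C\times\R^m}((\bar x,\bar y),\gph F)$. The set $\mathcal C\times\R^m$ is nonempty, closed and convex, and $\gph F$ is locally closed around $(\bar x,\bar y)$ by hypothesis, so Proposition~\ref{pro1}(iv) applies to the pair $(\gph F,\ \mathcal C\times\R^m)$ and yields
$$
N_{\mathcal C\times\R^m}\big((\bar x,\bar y),\gph F\big)=\Limsup_{(x,y)\xrightarrow{(\mathcal C\times\R^m)\cap\gph F}(\bar x,\bar y)}N^p_{\mathcal C\times\R^m}\big((x,y),\gph F\big),
$$
the right-hand side being exactly the limiting normal cone with respect to a set that underlies the limiting coderivative with respect to $\mathcal C$ in \cite{TQY01-ZJNU}. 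Hence $D^*_{\mathcal C}F(\bar x,\bar y)$ of Definition~\ref{coderivative-wrt} coincides with the limiting coderivative with respect to $\mathcal C$ of \cite{TQY01-ZJNU}, and \eqref{thm1-eqa} is precisely the condition appearing in \cite[Corollary~4]{TQY01-ZJNU}.

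Finally, \cite[Corollary~4]{TQY01-ZJNU} states that, for a closed convex set $\mathcal C$ and a multifunction with locally closed graph near $(\bar x,\bar y)$, the Aubin property with respect to $\mathcal C$ around $(\bar x,\bar y)$ is equivalent to the condition $D^*_{\mathcal C}F(\bar x,\bar y)(0)=\{0\}$; combining this with the preceding two steps proves the theorem. The delicate part is not analytic but bookkeeping: one must be careful that all structural hypotheses survive the reduction — local closedness of $\gph F_{\mathcal C}=\gph F\cap(\mathcal C\times\R^m)$ around $(\bar x,\bar y)$, closedness and convexity of $\mathcal C\times\R^m$, and the fact that passing to the Euclidean norm alters neither the Aubin property with respect to $\mathcal C$ nor the value of $D^*_{\mathcal C}F(\bar x,\bar y)(0)$ — since these are exactly the conditions needed to invoke Proposition~\ref{pro1}(iv) and \cite[Corollary~4]{TQY01-ZJNU}.
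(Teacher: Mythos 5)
Your proposal is correct and follows essentially the same route as the paper: the authors also obtain Theorem~\ref{thm1} by noting the norm-invariance of the Aubin property with respect to $\mathcal C$ and of the Fr\'{e}chet-based cones, identifying $D^*_{\mathcal C}F$ with the proximal-normal-cone based coderivative of \cite{TQY01-ZJNU} via Proposition~\ref{pro1}(iv) applied to $(\gph F,\ \mathcal C\times\R^m)$, and then invoking \cite[Corollary~4]{TQY01-ZJNU}. No gaps; your bookkeeping of the hypotheses (local closedness of $\gph F$, closedness and convexity of $\mathcal C\times\R^m$) matches what the paper implicitly uses.
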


We now introduce the concept of subdifferentials with respect to a set of single-valued mappings.

\begin{definition}
{\rm Consider the extended real-valued function $f:\mathbb R^n\to \bar\R.$ Let $\mathcal C$ be a closed and convex subset of $\mathbb R^n$ and $\bar x\in\mathcal C.$

{\rm (i)} The set
$\hat\partial_{\mathcal C}f(\bar x):=\left\{x^*\in \mathbb R^n\mid (x^*,-1)\in \hat N_{\mathcal C\times \R}((\bar x,f(\bar x)),\epi f)\right\}$ is called the {\it Fr\'{e}chet subdifferential with resepct to} $\mathcal C$ of $f$ at $\bar x.$

{\rm (ii)} A vector $x^*\in\R^n$ satisfying $(x^*,-1)\in N_{\mathcal C\times \R}((\bar x,f(\bar x)),\epi f)$ is called a {\it limiting subgradient vector with respect to} $\mathcal C$ of $f$ at $\bar x.$  The set of all of limiting subgradient vectors with respect to $\mathcal C$ of $f$ at $\bar x$ is called the {\it limiting subdifferential with resepct to} $\mathcal C$ of $f$ at $\bar x.$ Thus we have
$$
\partial_{\mathcal C}f(\bar x):=\left\{x^*\in \mathbb R^n\mid (x^*,-1)\in N_{\mathcal C\times \R}((\bar x,f(\bar x)),\epi f_{\mathcal C})\right\}.
$$

{\rm (iii)} The {\it horizon subdifferential with resepct to} $\mathcal C$ of $f$ at $\bar x,$ denoted $\partial^{\infty}_{\mathcal C}f(\bar x)$, is defined by
$$
\partial^{\infty}_{\mathcal C}f(\bar x):=\left\{x^*\in \mathbb R^n\mid (x^*,0)\in N_{\mathcal C\times\R}((\bar x,f(\bar x)),\epi f_{\mathcal C})\right\}.
$$

If $\bar x\notin \dom f\cap\mathcal C,$ we put $\hat\partial_{\mathcal C}f(\bar x):=\partial_{\mathcal C}f(\bar x):=\partial^{\infty}_{\mathcal C}f(\bar x):=\emptyset.$ In the case of $\mathcal C=\dom f,$ we write $\tilde\partial f(\bar x),\bar\partial f(\bar x), \bar\partial^{\infty}f(\bar x)$ instead of $\hat\partial_{\mathcal C}f(\bar x), \partial_{\mathcal C}f(\bar x),\partial^{\infty}_{\mathcal C}f(\bar x)$ and say to be the {\it relative Fr\'{e}chet, limiting, horizon subdifferentials}, respectively.}
\end{definition}

The following proposition directly implies from the definitions.

\begin{proposition} \label{pro4} Let  $f:\mathbb R^n\to \bar\R$ and $\mathcal C$ be a closed and convex subset of $\mathbb R^n$. Let $\bar x\in \mathcal C.$ Then  $\partial_{\mathcal C}f(\bar x) = D^*_{\mathcal C}\mathcal E^{f}(\bar x)(1)$ and $\partial^{\infty}_{\mathcal C}f(\bar x) = D^*_{\mathcal C}\mathcal E^{f}(\bar x)(0)$.
\end{proposition}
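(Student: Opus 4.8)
The plan is to show that computing $D^*_{\mathcal C}\mathcal E^{f}(\bar x)(y^*)$ for $y^*\in\{0,1\}$ amounts to nothing more than rewriting the definitions, once one identifies the graph of the restricted epigraphical mapping with the restricted epigraph. Throughout, $D^*_{\mathcal C}\mathcal E^{f}(\bar x)(\cdot)$ abbreviates $D^*_{\mathcal C}\mathcal E^{f}\big((\bar x,f(\bar x))\big)(\cdot)$, i.e. the base point is $(\bar x,f(\bar x))$; this is legitimate since $f(\bar x)\ge f(\bar x)$ gives $f(\bar x)\in\mathcal E^{f}(\bar x)$. When $\bar x\notin\dom f\cap\mathcal C$ both sides of each claimed equality are empty by the standing conventions, so we may assume $\bar x\in\dom f\cap\mathcal C$ (and then $f\in\mathcal F(\bar x)$ delivers the local closedness needed for the coderivative to be defined).

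The key step is the set identity $\gph(\mathcal E^{f})_{\mathcal C}=\epi f_{\mathcal C}$. On one hand, since $f$ is single-valued we have $\gph\mathcal E^{f}=\epi f$, and the restriction rule $\gph F_{\mathcal C}=\gph F\cap(\mathcal C\times\R^m)$ recorded in the preliminaries gives
\[
\gph(\mathcal E^{f})_{\mathcal C}=\gph\mathcal E^{f}\cap(\mathcal C\times\R)=\epi f\cap(\mathcal C\times\R).
\]
On the other hand, a pair $(x,\alpha)$ belongs to $\epi f_{\mathcal C}$ iff $\alpha\ge f_{\mathcal C}(x)$; this is impossible when $x\notin\mathcal C$ (then $f_{\mathcal C}(x)=\infty$) and reduces to $\alpha\ge f(x)$ when $x\in\mathcal C$, so $\epi f_{\mathcal C}=\epi f\cap(\mathcal C\times\R)$ as well. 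Hence the two sets coincide, and consequently the normal cones $N_{\mathcal C\times\R}\big((\bar x,f(\bar x)),\gph(\mathcal E^{f})_{\mathcal C}\big)$ and $N_{\mathcal C\times\R}\big((\bar x,f(\bar x)),\epi f_{\mathcal C}\big)$ are literally the same object.

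Finally I would plug this into Definition~\ref{coderivative-wrt} with $m=1$: for $y^*=1$,
\[
D^*_{\mathcal C}\mathcal E^{f}(\bar x)(1)=\big\{x^*\mid (x^*,-1)\in N_{\mathcal C\times\R}((\bar x,f(\bar x)),\epi f_{\mathcal C})\big\}=\partial_{\mathcal C}f(\bar x),
\]
and for $y^*=0$,
\[
D^*_{\mathcal C}\mathcal E^{f}(\bar x)(0)=\big\{x^*\mid (x^*,0)\in N_{\mathcal C\times\R}((\bar x,f(\bar x)),\epi f_{\mathcal C})\big\}=\partial^{\infty}_{\mathcal C}f(\bar x),
\]
which are exactly the two asserted equalities. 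There is no genuine obstacle here; the only points requiring a line of care are the verification that the restriction operation $(\cdot)_{\mathcal C}$ on set-valued mappings matches the operation $f\mapsto f_{\mathcal C}$ on extended-real-valued functions at the level of graphs, and the bookkeeping of the base point $(\bar x,f(\bar x))$ together with the degenerate case $\bar x\notin\dom f\cap\mathcal C$.
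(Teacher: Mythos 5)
Your argument is correct and is exactly the route the paper intends: it records this proposition as following ``directly from the definitions,'' and your identity $\gph(\mathcal E^{f})_{\mathcal C}=\epi f\cap(\mathcal C\times\R)=\epi f_{\mathcal C}$ together with Definition~\ref{coderivative-wrt} at $y^*=1$ and $y^*=0$ is precisely that definitional unwinding, with the base point $(\bar x,f(\bar x))$ and the degenerate case handled as the conventions require.
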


Similar to Aubin property with respect to a set of set-valued mappings, the locally Lipschitz continuous property with respect to a set of a single-valued mapping does not depend on equivalent norms of a finite dimension space. Using \cite[Theorem~7]{TQY01-ZJNU}, Proposition~\ref{pro1} and the definition of subdifferentials with respect to a set, we obtain the following result.

\begin{theorem}\label{thm2}
Let $f:\mathbb R^n\to \bar \R$, and let $\mathcal C$ be nonempty closed convex. Let $\bar x\in \mathcal C$ and $f\in \mathcal F(\bar x).$ Then $f$ is locally Lipschitz continuous with respect to $\mathcal C$ around $\bar x$ if and only if $\partial^{\infty}_{\mathcal C}f(\bar x)=\{0\}.$
\end{theorem}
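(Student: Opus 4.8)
The plan is to deduce the statement from \cite[Theorem~7]{TQY01-ZJNU}, which establishes precisely this equivalence for the horizon subdifferential with respect to a set built on the \emph{proximal} normal cone with respect to a set, by verifying that that object agrees with the $\hat N_{\mathcal C}$-based $\partial^{\infty}_{\mathcal C}f(\bar x)$ used here. First I would record that neither side of the equivalence depends on the choice of (equivalent) norm: the difference quotient defining ${\rm lip}_{\mathcal C}f(\bar x)$ in Definition~\ref{lips-vec} is distorted only by a bounded factor, so its finiteness is norm-free, while $\partial^{\infty}_{\mathcal C}f(\bar x)$ is built from $N_{\mathcal C\times\R}$, hence from $\hat N_{\mathcal C\times\R}$, which by the remark following Definition~\ref{normal-cone-wrt} does not depend on the norm. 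Consequently we may assume the Euclidean norm throughout.

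Next I would run the identification. Put $\Omega:=\epi f_{\mathcal C}$, with ambient set $\mathcal C\times\R$, which is nonempty, closed, and convex. Because $f\in\mathcal F(\bar x)$, the set $\epi f$ is locally closed around $(\bar x,f(\bar x))$, and since $\epi f_{\mathcal C}=\epi f\cap(\mathcal C\times\R)$ is the intersection of this with a closed set, it too is locally closed around $(\bar x,f(\bar x))$. Thus Proposition~\ref{pro1}~(iv) applies and yields
\[
N_{\mathcal C\times\R}\big((\bar x,f(\bar x)),\epi f_{\mathcal C}\big)=\Limsup_{(x,\alpha)\xrightarrow{(\mathcal C\times\R)\cap\epi f_{\mathcal C}}(\bar x,f(\bar x))}N^p_{\mathcal C\times\R}\big((x,\alpha),\epi f_{\mathcal C}\big),
\]
i.e. the limiting normal cone with respect to $\mathcal C\times\R$ of the present paper coincides, on $\epi f_{\mathcal C}$ at $(\bar x,f(\bar x))$, with the proximal-based one of \cite{TQY01-ZJNU} (this is the coincidence announced in the remark after Definition~\ref{normal-cone-wrt}). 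Intersecting both sides with $\R^n\times\{0\}$ shows that $\partial^{\infty}_{\mathcal C}f(\bar x)$ equals the corresponding horizon subdifferential with respect to $\mathcal C$ of \cite{TQY01-ZJNU}, and \cite[Theorem~7]{TQY01-ZJNU} then gives exactly: $f$ is locally Lipschitz continuous with respect to $\mathcal C$ around $\bar x$ if and only if this set is $\{0\}$.

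An alternative, self-contained route avoids invoking Theorem~7: by Proposition~\ref{pro4} the condition $\partial^{\infty}_{\mathcal C}f(\bar x)=\{0\}$ reads $D^*_{\mathcal C}\mathcal E^f(\bar x)(0)=\{0\}$, which by Theorem~\ref{thm1} (applied to $\mathcal E^f$, whose relevant graph $\epi f_{\mathcal C}$ is locally closed around $(\bar x,f(\bar x))$) is the Aubin property of $\mathcal E^f$ with respect to $\mathcal C$ around $(\bar x,f(\bar x))$; it then remains to show this is equivalent to $f$ being locally Lipschitz continuous with respect to $\mathcal C$ around $\bar x$, a standard epigraphical argument using $\mathcal E^f(u)\cap V\subset\mathcal E^f(x)+\kappa\Vert u-x\Vert\B$ together with the special form $\mathcal E^f(x)=[f(x),\infty)$ of the values. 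In either route the one genuinely delicate point is the same kind of bookkeeping: in the first, checking that $\epi f_{\mathcal C}$ is locally closed around $(\bar x,f(\bar x))$ and that the normal cone object of Proposition~\ref{pro1}~(iv) is verbatim the one used in \cite{TQY01-ZJNU}; in the second, extracting from lower semicontinuity of $f$ and the Aubin inclusion the a~priori local boundedness of $f$ on $\mathcal C$ near $\bar x$ needed to upgrade the one-sided estimate to the two-sided Lipschitz estimate of Definition~\ref{lips-vec}. Once this is settled the theorem follows at once.
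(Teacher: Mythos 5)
Your main route is exactly the paper's argument: the paper proves Theorem~\ref{thm2} in one line by invoking \cite[Theorem~7]{TQY01-ZJNU} together with Proposition~\ref{pro1} (to identify the Fr\'{e}chet-based limiting normal cone with respect to a set with the proximal-based one of \cite{TQY01-ZJNU} on the locally closed set $\epi f_{\mathcal C}$) and the norm-independence of both sides, which is precisely the bookkeeping you carry out. Your second, self-contained route via Proposition~\ref{pro4} and Theorem~\ref{thm1} is a reasonable alternative but is not what the paper does; the citation-based argument suffices.
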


Using the limiting subdifferential with respect to a set, we provide optimality conditions for the following optimization problem:
\begin{equation}\label{optimprob1}
\min f(x) \text{ \rm such that } x\in \mathcal C,
\end{equation}
where $f:\mathbb R^n\to \bar\R$ and $\mathcal C$ is a nonempty closed convex set.

An element $\bar x\in \R^n$ is called a {\it local solution} to \eqref{optimprob1} if there exists $r>0$ such that
$$
f(\bar x)\le f(x)\; \text{ \rm for all } x\in \mathcal C\cap\B(\bar x,r).
$$
In the case of $\mathcal C=\dom f,$ we say that $\bar x$ is a {\it local minimizer} of $f$.

The following theorem provides necessary conditions for a global solution of the problem \eqref{optimprob1} due to the Fr\'{e}chet and limiting subdifferentials with respect to $\mathcal C$ of $f$.

\begin{theorem}\label{tq1-thm2}
Let $f:\mathbb R^n\to \bar\R$, and let  $\mathcal C$ be a closed convex set. Let $\bar x\in\dom f\cap\mathcal C$ and $f_{\mathcal C}\in \mathcal F(\bar x).$  If $\bar x$ is a local solution to \eqref{optimprob1}, then
$$
0\in \hat\partial_{\mathcal C}f(\bar x)\subset\partial_{\mathcal C}f(\bar x).
$$
Consequently, if $\dom f$ is a closed convex set, then $0\in \bar\partial f(\bar x)$ whenever $\bar x$ is a local minimizer of $f.$
\end{theorem}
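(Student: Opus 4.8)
The plan is to reduce the statement to a first-order variational inequality for the restricted function $f_{\mathcal C}$ at $\bar x$ and then translate that into the membership $0\in\hat\partial_{\mathcal C}f(\bar x)$. First I would observe that since $\bar x$ is a local solution to \eqref{optimprob1}, we have $f(\bar x)\le f(x)$ for all $x\in\mathcal C$ near $\bar x$, which is exactly the statement that $f_{\mathcal C}$ attains a local minimum at $\bar x$ (here we use that $f_{\mathcal C}(x)=f(x)$ on $\mathcal C$ and $f_{\mathcal C}(x)=\infty$ off $\mathcal C$, and $\bar x\in\dom f\cap\mathcal C$ so $f_{\mathcal C}(\bar x)=f(\bar x)$ is finite). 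Consequently, for the epigraph $\epi f_{\mathcal C}=\epi f\cap(\mathcal C\times\R)$, the point $(\bar x,f(\bar x))$ is ``lowest'' in a neighborhood, and no direction of the form $(0,-1)$ can point into $\epi f_{\mathcal C}$ near $(\bar x,f(\bar x))$.

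Next I would verify directly from the definition of $\hat\partial_{\mathcal C}f(\bar x)$ that $0$ lies in it, i.e. that $(0,-1)\in\hat N_{\mathcal C\times\R}((\bar x,f(\bar x)),\epi f)$. Using the characterization in \eqref{def-fre-normal-cone} of the Fréchet normal cone with respect to a set, with $\Omega=\epi f$ and the reference set $\mathcal C\times\R$, this requires two things: (a) a scalar $p>0$ with $(\bar x,f(\bar x))+p(0,-1)=(\bar x,f(\bar x)-p)\in\mathcal C\times\R$, which is immediate since $\bar x\in\mathcal C$; and (b) the inequality
$$
\limsup_{(x,\alpha)\xrightarrow{(\epi f)\cap(\mathcal C\times\R)}(\bar x,f(\bar x))}\frac{\langle(0,-1),(x,\alpha)-(\bar x,f(\bar x))\rangle}{\Vert(x,\alpha)-(\bar x,f(\bar x))\Vert}=\limsup\frac{f(\bar x)-\alpha}{\Vert(x,\alpha)-(\bar x,f(\bar x))\Vert}\le 0.
$$
For any $(x,\alpha)\in(\epi f)\cap(\mathcal C\times\R)$ close to $(\bar x,f(\bar x))$ we have $x\in\mathcal C$ near $\bar x$ and $\alpha\ge f(x)\ge f(\bar x)$ by local optimality, hence the numerator $f(\bar x)-\alpha\le 0$ while the denominator is positive; so each quotient is $\le 0$ and the $\limsup$ is $\le 0$. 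This gives $0\in\hat\partial_{\mathcal C}f(\bar x)$.

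The inclusion $\hat\partial_{\mathcal C}f(\bar x)\subset\partial_{\mathcal C}f(\bar x)$ follows because $\hat N_{\mathcal C\times\R}((\bar x,f(\bar x)),\epi f)=\hat N((\bar x,f(\bar x)),\epi f_{\mathcal C})\cap\mathcal R((\bar x,f(\bar x)),\mathcal C\times\R)$ by Proposition~\ref{pro1}(i), that $\hat N(z,\Lambda)\subset N(z,\Lambda)$ for the ordinary Fréchet and limiting normal cones (this passing to the outer limit at $z$ itself), and that $\hat N_{\mathcal C\times\R}(\bar z,\Lambda)\subset\Limsup_{z\to\bar z}\hat N_{\mathcal C\times\R}(z,\Lambda)=N_{\mathcal C\times\R}(\bar z,\Lambda)$ directly from \eqref{limiting-normal-cone} with $\bar z=(\bar x,f(\bar x))$ and $\Lambda=\epi f_{\mathcal C}$; combining these with the definitions of $\hat\partial_{\mathcal C}$ and $\partial_{\mathcal C}$ yields the claimed containment. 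Finally, for the ``consequently'' part: when $\dom f$ is closed and convex, specialize $\mathcal C=\dom f$; then a local minimizer $\bar x$ of $f$ is a local solution of \eqref{optimprob1} with this $\mathcal C$ (trivially, since off $\dom f$ the value is $\infty$), $f_{\dom f}=f\in\mathcal F(\bar x)$, and the first part gives $0\in\partial_{\dom f}f(\bar x)=\bar\partial f(\bar x)$ by the notational convention in the definition of the relative limiting subdifferential.

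I do not expect a serious obstacle here; the statement is essentially the with-respect-to-a-set analogue of Fermat's rule, and everything reduces to unwinding definitions. The one point requiring a little care is condition (a) above — ensuring the auxiliary direction $(0,-1)$ genuinely satisfies $\bar z+p\,(0,-1)\in\mathcal C\times\R$ — but since the second coordinate ranges over all of $\R$ and $\bar x\in\mathcal C$, this is automatic for every $p>0$, so no qualification condition is needed.
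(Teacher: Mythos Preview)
Your proposal is correct and follows essentially the same approach as the paper: both verify directly that $(0,-1)\in\hat N_{\mathcal C\times\R}((\bar x,f(\bar x)),\epi f)$ by checking the $\limsup$ condition (using $\alpha\ge f(x)\ge f(\bar x)$ for nearby $(x,\alpha)\in\epi f\cap(\mathcal C\times\R)$) together with the trivial membership $(\bar x,f(\bar x))+p(0,-1)\in\mathcal C\times\R$. You are slightly more thorough than the paper in explicitly justifying the inclusion $\hat\partial_{\mathcal C}f(\bar x)\subset\partial_{\mathcal C}f(\bar x)$ and the ``consequently'' clause, which the paper leaves implicit.
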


\begin{proof}
Let $\bar x$ be a local solution to \eqref{optim-prob1}. It is sufficient to prove that $0\in \hat\partial_{\mathcal C}f(\bar x).$ Indeed, there exists $r>0$ such that
$$
f(\bar x)\le f(x)\; \text{ \rm for all } x\in\mathcal C\cap\B(\bar x,r).
$$
It follows that the following inequality holds
$$
\langle 0, x-\bar x\rangle -(r-f(\bar x))\le\epsilon(\Vert x-\bar x\Vert + \vert r-f(\bar x)\vert)
$$
 for all $(x,r)\in \epi f\cap(\mathcal C\times\R)\cap \B((\bar x,f(\bar x)),r)$ and $\epsilon>0$, and so
$$
(0,-1)\in \hat N((\bar x,f(\bar x)),\epi f\cap(\mathcal C\times\R))= \hat N((\bar x,f(\bar x)),\epi f_{\mathcal C}).
$$
Moreover, we observe that $(\bar x,f(\bar x))+p (0,-1)\in\mathcal C\times\R$ for any $p>0$. This  implies that $(0,-1)\in \hat N_{\mathcal C\times\R}((\bar x,f(\bar x)),\epi f)$, which is clearly equivalent to $0\in\hat\partial_{\mathcal C} f(\bar x).$
\end{proof}

\section{Formulas for Calculating of Normal Cones with respect to a Set}

We begin this section with the product rules for Fr\'{e}chet and limiting normal cones with respect to a set. In what follows, one always assumes that all of the considered sets are nonempty and $\mathcal C, \mathcal C_i$, for $i=1,2$, are closed and convex sets while $\Omega_i$, for $i=1,2$, are assumed to be locally closed around the reference point.

\begin{theorem}\label{thm51}
Let $\Omega_1,\mathcal C_1\subset \R^n, \Omega_2,\mathcal C_2\subset \R^m$ and let $(\bar x_1,\bar x_2)\in (\Omega_1\cap\mathcal C_1)\times (\Omega_2\cap\mathcal C_2).$ Then the following formulas hold:

{\rm (i)} $\hat N_{\mathcal C_1\times\mathcal C_2}((\bar x_1,\bar x_2), \Omega_1\times\Omega_2)=\hat N_{\mathcal C_1}(\bar x_1,\Omega_1)\times \hat N_{\mathcal C_2}(\bar x_2,\Omega_2).$

{\rm (ii)} $N_{\mathcal C_1\times\mathcal C_2}((\bar x_1,\bar x_2), \Omega_1\times\Omega_2)=N_{\mathcal C_1}(\bar x_1,\Omega_1)\times N_{\mathcal C_2}(\bar x_2,\Omega_2).$
\end{theorem}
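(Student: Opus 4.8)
The plan is to reduce everything to the corresponding \emph{classical} product rule for Fr\'echet normal cones, together with the elementary product behaviour of the cone $\mathcal R(\cdot,\cdot)$, and then to deduce part~(ii) from part~(i) by a routine paired-sequence argument on the Painlev\'e--Kuratowski outer limit. Throughout I use the sum norm on $\R^n\times\R^m$, so that $\Vert(x_1,x_2)\Vert=\Vert x_1\Vert_n+\Vert x_2\Vert_m$, and the obvious identities $(\Omega_1\times\Omega_2)\cap(\mathcal C_1\times\mathcal C_2)=(\Omega_1\cap\mathcal C_1)\times(\Omega_2\cap\mathcal C_2)$ and $(A_1\times A_2)\cap(B_1\times B_2)=(A_1\cap B_1)\times(A_2\cap B_2)$.

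For part~(i) I would start from the representation in Proposition~\ref{pro1}(i), namely $\hat N_{\mathcal C}(\bar x,\Omega)=\hat N(\bar x,\Omega\cap\mathcal C)\cap\mathcal R(\bar x,\mathcal C)$, applied to the product data (the hypotheses of Proposition~\ref{pro1} are met since $\mathcal C_1\times\mathcal C_2$ is closed convex). The first factor splits by the classical product formula $\hat N((\bar x_1,\bar x_2),A_1\times A_2)=\hat N(\bar x_1,A_1)\times\hat N(\bar x_2,A_2)$, which is valid for arbitrary sets $A_1,A_2$: it follows straight from the $\limsup$ definition of $\hat N$ by freezing one coordinate at its reference value for the inclusion ``$\subset$'', and by adding the two one-sided quotient estimates for ``$\supset$'' (no convexity or closedness needed). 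The second factor splits as $\mathcal R((\bar x_1,\bar x_2),\mathcal C_1\times\mathcal C_2)=\mathcal R(\bar x_1,\mathcal C_1)\times\mathcal R(\bar x_2,\mathcal C_2)$: ``$\subset$'' is immediate, and for ``$\supset$'', given $p_i>0$ with $\bar x_i+p_ix_i^*\in\mathcal C_i$, the convexity of $\mathcal C_i$ together with $\bar x_i\in\mathcal C_i$ yields $\bar x_i+px_i^*\in\mathcal C_i$ for every $p\in(0,\min\{p_1,p_2\}]$, so one common radius works. Intersecting the two product decompositions gives~(i). (Alternatively, one can argue directly from Definition~\ref{normal-cone-wrt}(i): the admissibility condition $\bar x+px^*\in\mathcal C$ splits by the same convexity argument, and the defining $\limsup$ over $(\Omega_1\cap\mathcal C_1)\times(\Omega_2\cap\mathcal C_2)$ splits by freezing coordinates.)

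For part~(ii) I would insert~(i) into the definition $N_{\mathcal C}(\bar x,\Omega)=\Limsup_{x\xrightarrow{\Omega\cap\mathcal C}\bar x}\hat N_{\mathcal C}(x,\Omega)$, obtaining
$$
N_{\mathcal C_1\times\mathcal C_2}\big((\bar x_1,\bar x_2),\Omega_1\times\Omega_2\big)=\Limsup_{(x_1,x_2)\to(\bar x_1,\bar x_2)}\Big(\hat N_{\mathcal C_1}(x_1,\Omega_1)\times\hat N_{\mathcal C_2}(x_2,\Omega_2)\Big),
$$
where the outer limit runs over $(x_1,x_2)\in(\Omega_1\cap\mathcal C_1)\times(\Omega_2\cap\mathcal C_2)$, and then invoke the general fact that the outer limit of a product set-valued mapping is the product of the outer limits. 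Concretely: for ``$\subset$'', any sequences $(x_1^k,x_2^k)\to(\bar x_1,\bar x_2)$ in the product and $(y_1^k,y_2^k)\to(y_1,y_2)$ with $y_i^k\in\hat N_{\mathcal C_i}(x_i^k,\Omega_i)$ witness $y_i\in N_{\mathcal C_i}(\bar x_i,\Omega_i)$ componentwise; for ``$\supset$'', choosing for each $i$ sequences $x_i^k\xrightarrow{\Omega_i\cap\mathcal C_i}\bar x_i$ and $y_i^k\to y_i$ with $y_i^k\in\hat N_{\mathcal C_i}(x_i^k,\Omega_i)$, the paired sequence $\big((x_1^k,x_2^k),(y_1^k,y_2^k)\big)$ realizes $(y_1,y_2)$ in the left-hand outer limit, since $(x_1^k,x_2^k)$ stays in $(\Omega_1\cap\mathcal C_1)\times(\Omega_2\cap\mathcal C_2)$ and, by~(i), $(y_1^k,y_2^k)\in\hat N_{\mathcal C_1\times\mathcal C_2}((x_1^k,x_2^k),\Omega_1\times\Omega_2)$.

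I do not expect a genuine obstacle here. The only mildly delicate points are: (a) justifying the Fr\'echet-normal-cone product rule for the possibly \emph{nonconvex} sets $\Omega_i\cap\mathcal C_i$, which is handled directly from the definition and needs no regularity; and (b) the use of convexity of the $\mathcal C_i$ to produce a single radius $p$, both in the $\mathcal R$-product identity and in the admissibility clause of $\hat N_{\mathcal C}$ — this is precisely where the standing convexity hypothesis on $\mathcal C_1,\mathcal C_2$ is essential. Everything else is routine manipulation of sequences and set identities.
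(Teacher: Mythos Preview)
Your proposal is correct and follows essentially the same route as the paper: for (i) the paper likewise uses the representation $\hat N_{\mathcal C}(\bar x,\Omega)=\hat N(\bar x,\Omega\cap\mathcal C)\cap\mathcal R(\bar x,\mathcal C)$, invokes the classical Fr\'echet product rule (there cited as \cite[Proposition~1.2]{Mor06}), and proves $\mathcal R((\bar x_1,\bar x_2),\mathcal C_1\times\mathcal C_2)=\mathcal R(\bar x_1,\mathcal C_1)\times\mathcal R(\bar x_2,\mathcal C_2)$ via the same $\min\{p_1,p_2\}$ convexity argument; for (ii) it carries out exactly the two-inclusion paired-sequence argument you describe.
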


\begin{proof}
{\rm (i)} Thank to \cite[Proposition~1.2]{Mor06}, one has
\begin{align}
\hat N((\bar x_1,\bar x_2),\Omega_1\times\Omega_2\cap\mathcal C_1\times\mathcal C_2)&=\hat N((\bar x_1,\bar x_2),\Omega_1\times\mathcal C_1\cap\Omega_2\times\mathcal C_2)\nonumber\\
&=\hat N(\bar x_1,\Omega_1\cap\mathcal C_1)\times \hat N(\bar x_2,\Omega_2\cap\mathcal C_2).\label{thm51-eq1}
\end{align}
We next demonstrate
$$
\mathcal R((\bar x_1,\bar x_2), \mathcal C_1\times\mathcal C_2)=\mathcal R(\bar x_1, \mathcal C_1)\times\mathcal R(\bar x_2, \mathcal C_2).
$$
Indeed, taking $(x_1^*,x_2^*)\in \mathcal R((\bar x_1,\bar x_2), \mathcal C_1\times\mathcal C_2)$, one sees that  there exist $p>0$ such that
$$
(\bar x_1,\bar x_2)+p(x_1^*,x_2^*)\in \mathcal C_1\times\mathcal C_2,
$$
which implies that $x_1^*\in \mathcal R(\bar x_1,\mathcal C_1)$ and $x_2^*\in \mathcal R(\bar x_2,\mathcal C_2)$. Therefore,
$$
\mathcal R((\bar x_1,\bar x_2),\mathcal C_1\times\mathcal C_2)\subset\mathcal R(\bar x_1,\mathcal C_1)\times\mathcal R(\bar x_2, \mathcal C_2).
$$
To see the opposite inclusion, one takes $$(u_1^*,u_2^*)\in \mathcal R(\bar x,\mathcal C_1)\times\mathcal R(\bar x_2, \mathcal C_2).$$ There we find $\lambda_i> 0$ such that $\bar x_i+\lambda_iu_i^*\in \mathcal C_i$ for $i=1,2$. Putting $\lambda=\min\{\lambda_1,\lambda_2\},$ we obtain $\bar x_i+\lambda u_i^*\in \mathcal C_i$ due to the convexity of $\mathcal C_i$ and the fact $\bar x_i\in \mathcal C_i$ for $i=1,2.$ It follows that $(\bar x_1,\bar x_2)+\lambda(u_1^*,u_2^*)\in\mathcal C_1\times\mathcal C_2$ and then $(u_1^*,u_2^*)\in \mathcal R((\bar x_1,\bar x_2),\mathcal C_1\times\mathcal C_2).$ Thus, we derive
$$
\mathcal R((\bar x_1,\bar x_2),\mathcal C_1\times\mathcal C_2)\supset\mathcal R(\bar x_1,\mathcal C_1)\times\mathcal R(\bar x_2, \mathcal C_2).
$$
So, we obtain the following equality
$$
\mathcal R((\bar x_1,\bar x_2),\mathcal C_1\times\mathcal C_2)=\mathcal R(\bar x_1,\mathcal C_1)\times\mathcal R(\bar x_2, \mathcal C_2).
$$
Taking \eqref{thm51-eq1} into account and using the definition, we have
\begin{align*}
        &\hat N_{\mathcal C_1\times\mathcal C_2}((\bar x_1,\bar x_2), \Omega_1\times\Omega_2)\\
        &=\hat N((\bar x_1,\bar x_2), \Omega_1\times\Omega_2\cap\mathcal C_1\times\mathcal C_2)\cap \mathcal R((\bar x_1,\bar x_2),\mathcal C_1\times\mathcal C_2)\\
        &=\left(\hat N(\bar x_1,\Omega_1\cap\mathcal C_1)\times \hat N(\bar x_2,\Omega_2\cap\mathcal C_2)\right)\cap\left(\mathcal R(\bar x_1,\mathcal C_1)\times\mathcal R(\bar x_2, \mathcal C_2)\right)\\
        &=\left(\hat N(\bar x_1,\Omega_1\cap\mathcal C_1)\cap \mathcal R(\bar x_1,\mathcal C_1)\right)\times \left(\hat N(\bar x_2,\Omega_2\cap\mathcal C_2)\cap\mathcal R(\bar x_2, \mathcal C_2)\right)\\
        &=\hat N_{\mathcal C_1}(\bar x_1,\Omega_1)\times\hat N_{\mathcal C_2}(\bar x_2,\Omega_2).
\end{align*}

{\rm (ii)} Let $(x_1^*,x_2^*)\in N_{\mathcal C_1\times\mathcal C_2}((\bar x_1,\bar x_2),\Omega_1\times\Omega_2).$ There exist by the definition sequences $(u_k,v_k)\xrightarrow{\Omega_1\times\Omega_2\cap\mathcal C_1\times\mathcal C_2} (\bar x_1,\bar x_2)$ and $(u_k^*, v_k^*)\to (x_1^*,x_2^*)$ such that
$$
(u_k^*, v_k^*)\in \hat N_{\mathcal C_1\times\mathcal C_2}((u_k,v_k),\Omega_1\times\Omega_2).
$$
By using (i), we achieve
$$
(u_k^*, v_k^*)\in \hat N_{\mathcal C_1}(u_k,\Omega_1)\times \hat N_{\mathcal C_2}(v_k,\Omega_2).
$$
Since $u_k\xrightarrow{\Omega_1\cap\mathcal C_1}\bar x_1$, $u_k^*\to x^*_1$ and $u_k^*\in \hat N_{\mathcal C_1}(u_k,\Omega_1),$ we have $x_1^*\in N_{\mathcal C_1}(\bar x_1,\Omega_1).$ Similarly, we also have $x_2^*\in N_{\mathcal C_2}(\bar x_1,\Omega_2)$ and then
$$
(x_1^*,x_2^*)\in N_{\mathcal C_1}(\bar x_1,\Omega_1)\times N_{\mathcal C_2}(\bar x_2,\Omega_2).
$$
Therefore,
$$
N_{\mathcal C_1\times\mathcal C_2}((\bar x_1,\bar x_2),\Omega_1\times\Omega_2)\subset N_{\mathcal C_1}(\bar x_1,\Omega_1)\times N_{\mathcal C_2}(\bar x_2,\Omega_2).
$$
It remains to prove the opposite inclusion. Taking $(y_1^*,y_2^*)\in N_{\mathcal C_1}(\bar x_1,\Omega_1)\times N_{\mathcal C_2}(\bar x_2,\Omega_2),$ we find by the definition sequences $z_k\xrightarrow{\Omega_1\cap\mathcal C_1}\bar x_1, z_k^*\to y_1^*$ and $w_k\xrightarrow{\Omega_2\cap\mathcal C_2}\bar x_2, w_k^*\to y_2^*$ such that $z_k^*\in \hat N_{\mathcal C_1}(z_k,\Omega_1)$ and $w_k^*\in \hat N_{\mathcal C_2}(w_k,\Omega_2),$ which  implies that  $(z_k,w_k)\xrightarrow{\Omega_1\times\Omega_2\cap\mathcal C_1\times\mathcal C_2}(\bar x_1,\bar x_2)$ and $(z_k^*,w_k^*)\to (y_1^*,y_2^*)$ with
$$
(z_k^*,w_k^*)\in \hat N_{\mathcal C_1}(z_k,\Omega_1)\times\hat N_{\mathcal C_2}(w_k,\Omega_2)=\hat N_{\mathcal C_1\times\mathcal C_2}((z_k,w_k),\Omega_1\times\Omega_2).
$$
This follows by the definition that $(y_1^*,y_2^*)\in N_{\mathcal C_1\times\mathcal C_2}((\bar x_1,\bar x_2),\Omega_1\times\Omega_2)$ and so
$$
N_{\mathcal C_1\times\mathcal C_2}((\bar x_1,\bar x_2),\Omega_1\times\Omega_2)\supset N_{\mathcal C_1}(\bar x_1,\Omega_1)\times N_{\mathcal C_2}(\bar x_2,\Omega_2).
$$
Hence, we obtain the following relation
$$
N_{\mathcal C_1\times\mathcal C_2}((\bar x_1,\bar x_2),\Omega_1\times\Omega_2) = N_{\mathcal C_1}(\bar x_1,\Omega_1)\times N_{\mathcal C_2}(\bar x_2,\Omega_2).
$$
The proof of theorem is completed.
\end{proof}

\begin{theorem}\label{product-rule2}
Let $\Omega_1,\mathcal C_1\subset\R^{n+s}$ and $\Omega_2,\mathcal C_2\subset \R^m$. Define the following sets  $$\Omega:=\{(x,y,z)\in \R^n\times\R^m\times\R^s\mid (x,z)\in \Omega_1, y\in\Omega_2\}$$ and
$$
\mathcal C:=\{(x,y,z)\in \R^n\times\R^m\times\R^s\mid (x,z)\in \mathcal C_1, y\in\mathcal C_2\}
$$
and sssume that $\Omega_i$ for $i=1,2$ are locally closed around $(\bar x,\bar y,\bar z)\in \Omega\cap\mathcal C.$ Then

{\rm (i)} $\hat N_{\mathcal C}((\bar x,\bar y,\bar z),\Omega)=\{(x^*,y^*,z^*)\mid (x^*,y^*)\in \hat N_{\mathcal C_1}((\bar x,\bar z),\Omega_1), y^*\in \hat N_{\mathcal C_2}(\bar y,\Omega_2)\}.$

{\rm (ii)} $N_{\mathcal C}((\bar x,\bar y,\bar z),\Omega)=\{(x^*,y^*,z^*)\mid (x^*,y^*)\in N_{\mathcal C_1}((\bar x,\bar z),\Omega_1), y^*\in N_{\mathcal C_2}(\bar y,\Omega_2)\}.$
\end{theorem}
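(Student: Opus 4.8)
The plan is to recognize that $\Omega$ and $\mathcal C$ are merely $\Omega_1\times\Omega_2$ and $\mathcal C_1\times\mathcal C_2$ written in a permuted coordinate system, and then to transport Theorem~\ref{thm51} through that permutation. Concretely, introduce the linear isomorphism $\sigma\colon\R^n\times\R^m\times\R^s\to\R^{n+s}\times\R^m$, $\sigma(x,y,z)=((x,z),y)$. It is a bijection, it is an isometry for the sum norms used throughout, its inverse coincides with its adjoint $\sigma^{*}$, where $\sigma^{*}((x^{*},z^{*}),y^{*})=(x^{*},y^{*},z^{*})$, and one has $\Omega=\sigma^{-1}(\Omega_1\times\Omega_2)$, $\mathcal C=\sigma^{-1}(\mathcal C_1\times\mathcal C_2)$, $\Omega\cap\mathcal C=\sigma^{-1}\big((\Omega_1\cap\mathcal C_1)\times(\Omega_2\cap\mathcal C_2)\big)$, with $\sigma(\bar x,\bar y,\bar z)=((\bar x,\bar z),\bar y)$. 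Since the $\mathcal C_i$ are closed and convex and the $\Omega_i$ are locally closed around the reference point, $\mathcal C$ is closed and convex and $\Omega$ is locally closed around $(\bar x,\bar y,\bar z)$, so every object in Definition~\ref{normal-cone-wrt} is well defined and $\Omega_1\times\Omega_2$, $\mathcal C_1\times\mathcal C_2$ satisfy the hypotheses of Theorem~\ref{thm51}.

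The first step is a covariance lemma: for every $w\in\Omega\cap\mathcal C$,
$$
\hat N_{\mathcal C}(w,\Omega)=\sigma^{*}\,\hat N_{\mathcal C_1\times\mathcal C_2}\big(\sigma(w),\Omega_1\times\Omega_2\big).
$$
I would derive this from Proposition~\ref{pro1}(i), which gives $\hat N_{\mathcal C}(w,\Omega)=\hat N(w,\Omega\cap\mathcal C)\cap\mathcal R(w,\mathcal C)$: the substitution $u=\sigma(x)$ in the $\limsup$ defining the Fr\'echet normal cone (using that $\sigma$ is a linear homeomorphism, isometric for a suitable norm, and that the Fr\'echet normal cone is norm-independent) gives $\hat N(w,\Omega\cap\mathcal C)=\sigma^{*}\hat N(\sigma w,(\Omega_1\cap\mathcal C_1)\times(\Omega_2\cap\mathcal C_2))$, while $\mathcal R(w,\mathcal C)=\sigma^{*}\mathcal R(\sigma w,\mathcal C_1\times\mathcal C_2)$ is immediate from the definition of $\mathcal R$; since $\sigma^{*}$ is bijective one pulls it out of the intersection and applies Proposition~\ref{pro1}(i) once more to $\mathcal C_1\times\mathcal C_2$. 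With the lemma in hand, part~(i) follows by taking $w=(\bar x,\bar y,\bar z)$, inserting Theorem~\ref{thm51}(i), namely $\hat N_{\mathcal C_1\times\mathcal C_2}(\sigma(\bar x,\bar y,\bar z),\Omega_1\times\Omega_2)=\hat N_{\mathcal C_1}((\bar x,\bar z),\Omega_1)\times\hat N_{\mathcal C_2}(\bar y,\Omega_2)$, and applying $\sigma^{*}$, which merely reshuffles the three coordinate blocks into the order recorded in~(i). For part~(ii) I would start from $N_{\mathcal C}(w,\Omega)=\Limsup_{x\xrightarrow{\Omega\cap\mathcal C}w}\hat N_{\mathcal C}(x,\Omega)$; because $\sigma$ is a homeomorphism carrying $\Omega\cap\mathcal C$ onto $(\Omega_1\cap\mathcal C_1)\times(\Omega_2\cap\mathcal C_2)$ and $\sigma^{*}$ is a continuous bijection, the Painlev\'e--Kuratowski outer limit commutes with $\sigma^{*}$ and with the relabelling of the base point, so the covariance lemma yields $N_{\mathcal C}((\bar x,\bar y,\bar z),\Omega)=\sigma^{*}N_{\mathcal C_1\times\mathcal C_2}(\sigma(\bar x,\bar y,\bar z),\Omega_1\times\Omega_2)$, and Theorem~\ref{thm51}(ii) plus the explicit form of $\sigma^{*}$ completes the proof.

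I do not expect a genuine obstacle: the whole mathematical content lies in Theorem~\ref{thm51}, and the rest is the elementary fact that $\hat N$, $\mathcal R$, $N_{\mathcal C}$ and Painlev\'e--Kuratowski outer limits transform covariantly under the isometric linear isomorphism $\sigma$. The two points that warrant a little care are deriving the covariance rule for $\hat N_{\mathcal C}$ via Proposition~\ref{pro1}(i) (rather than wrestling directly with the $\limsup$ in Definition~\ref{normal-cone-wrt}) and justifying that $\sigma^{*}$ may be moved through the $\Limsup$ in part~(ii). An equivalent alternative, if one prefers not to introduce $\sigma$, is to repeat the proofs of Proposition~\ref{pro1}(i) and Theorem~\ref{thm51} verbatim with the plain product replaced by the permuted one; this is harmless because Mordukhovich's product rule for Fr\'echet normal cones \cite[Proposition~1.2]{Mor06} and the outer-limit argument behind Theorem~\ref{thm51} are insensitive to the ordering of the coordinates.
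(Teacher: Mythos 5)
Your argument is correct, and it gets to the result by a different organization than the paper. The paper never introduces the permutation map: it re-runs the proof of Theorem~\ref{thm51} directly for the permuted product, i.e.\ it invokes Proposition~\ref{pro1}(i) to write $\hat N_{\mathcal C}((\bar x,\bar y,\bar z),\Omega)=\hat N((\bar x,\bar y,\bar z),\Omega\cap\mathcal C)\cap\mathcal R((\bar x,\bar y,\bar z),\mathcal C)$, applies Mordukhovich's product formula \cite[Proposition~1.2]{Mor06} to the first factor (the permuted ordering being harmless there), computes $\mathcal R((\bar x,\bar y,\bar z),\mathcal C)$ by hand --- the only nontrivial point being the common scalar $p=\min\{p_1,p_2\}$, which uses the convexity of $\mathcal C_1,\mathcal C_2$ exactly where you use it implicitly through Theorem~\ref{thm51} --- and then settles (ii) by repeating the diagonal-sequence argument of Theorem~\ref{thm51}(ii). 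You instead isolate one equivariance lemma: $\hat N$, $\mathcal R$, hence $\hat N_{\mathcal C}$ and the Painlev\'e--Kuratowski outer limit all transport through the isometric coordinate permutation $\sigma$ with $\sigma^{-1}=\sigma^{*}$, after which Theorem~\ref{thm51} is cited wholesale. Your reduction is the more economical route (it makes explicit that the statement is literally Theorem~\ref{thm51} read in permuted coordinates, and both delicate points you flag --- deriving the covariance of $\hat N_{\mathcal C}$ from Proposition~\ref{pro1}(i) and commuting $\sigma^{*}$ with $\Limsup$ --- are handled correctly), whereas the paper's proof is essentially the ``repeat verbatim'' alternative you mention, which buys self-containedness at the cost of redoing the computations. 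One cosmetic remark: what your argument (and the paper's own proof) actually yields is the formula with $(x^*,z^*)\in\hat N_{\mathcal C_1}((\bar x,\bar z),\Omega_1)$ (resp.\ $N_{\mathcal C_1}$); the ``$(x^*,y^*)$'' appearing in the displayed statement of Theorem~\ref{product-rule2} is a typo, and applying $\sigma^{*}$ reshuffles the blocks into that intended order, as you indicate.
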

\begin{proof}
It is sufficiently to prove (i) because (ii) implies from (i) with the similar arguments as in the proof of Theorem~\ref{thm51}~(ii). We first observe that
\begin{equation}\label{product-rule2-eq1}
    \hat N_{\mathcal C}((\bar x,\bar y,\bar z),\Omega)=\hat N((\bar x,\bar y,\bar z),\Omega\cap\mathcal C)\cap\mathcal R((\bar x,\bar y,\bar z),\mathcal C).
\end{equation}
Since $\Omega\cap\mathcal C=\{(x,y,z)\mid (x,z)\in \Omega_1\cap\mathcal C_1, y\in \Omega_2\cap\mathcal C_2\},$ we have
\begin{equation}\label{product-rule2-eq2}
    \hat N((\bar x,\bar y,\bar z),\Omega\cap\mathcal C)=\{(x^*,y^*,z^*)\mid (x^*,z^*)\in \hat N((\bar x,\bar z),\Omega_1\cap\mathcal C_1), y^*\in \hat N(\bar y,\Omega_2\cap\mathcal C_2)\}.
\end{equation}
Moreover, we also have
\begin{align*}
        \mathcal R((\bar x,\bar y,\bar z), \mathcal C)&=\{(u,v,w)\mid \exists p> 0 : (\bar x,\bar y,\bar z)+p(u,v,w)\in\mathcal C\}\\
        &=\Big\{(u,v,w)\mid \exists p> 0 : (\bar x,\bar z) +p(u,w)\in \mathcal C_1, \bar y+pv\in \mathcal C_2\Big\}\\
        &\subset \{(u,v,w)\mid (u,w)\in \mathcal R((\bar x,\bar z),\mathcal C_1), v\in \mathcal R(\bar y,\mathcal C_2)\}.
\end{align*}

On the other hand, let
$$
(\bar u,\bar v,\bar w)\in \{(u,v,w)\mid (u,w)\in \mathcal R((\bar x,\bar z),\mathcal C_1), v\in \mathcal R(\bar y,\mathcal C_2)\}.
$$
Then there exist $p_1,p_2> 0$ such that
$$
(\bar x,\bar z)+p_1(\bar u,\bar w)\in\mathcal C_1 \text{ \rm and } \bar y+p_2\bar v\in\mathcal C_2.
$$
Putting $p:=\min\{p_1,p_2\},$ we have $(\bar x,\bar z)+p(\bar u,\bar w)\in\mathcal C_1$ and $\bar y+p\bar v\in\mathcal C_2$ due to the convexity of $\mathcal C_1,\mathcal C_2.$ Therefore,
$$
(\bar x,\bar y,\bar z)+p(\bar u,\bar v,\bar w)\in \mathcal C,
$$
which gives us that
$$
(\bar u,\bar v,\bar w)\in \mathcal R((\bar x,\bar y,\bar z),\mathcal C).
$$
Thus we obtain
$$
\{(u,v,w)\mid (u,w)\in \mathcal R((\bar x,\bar z),\mathcal C_1), v\in \mathcal R(\bar y,\mathcal C_2)\}\subset \mathcal R((\bar x,\bar y,\bar z),\mathcal C)
$$
and hence
$$
\{(u,v,w)\mid (u,w)\in \mathcal R((\bar x,\bar z),\mathcal C_1), v\in \mathcal R(\bar y,\mathcal C_2)\}=\mathcal R((\bar x,\bar y,\bar z),\mathcal C).
$$
Taking \eqref{product-rule2-eq1} and \eqref{product-rule2-eq2} into account, we achieve
\begin{align*}
    \hat N_{\mathcal C}((\bar x,\bar y,\bar z),\Omega)=\{(x^*,y^*,z^*)\mid (x^*,z^*)\in \hat N_{\mathcal C_1}((\bar x,\bar z),\Omega_1), y^*\in \hat N_{\mathcal C_2}(\bar y,\Omega_2)\}.
\end{align*}
Hence, the proof is completed.
\end{proof}

We next establish the sum rules for normal cones with respect to a set. To do this, we need to present two qualification conditions as follows.

\begin{definition}\label{slqc}
{\rm Let $\Omega_1,\Omega_2,\mathcal C_1$ and $\mathcal C_2$ be nonempty sets, and let $\bar x\in \Omega_1\cap\Omega_2\cap\mathcal C_1\cap\mathcal C_2.$

{\rm (i)} We say that $\{\Omega_1,\Omega_2\}$ satisfy the {\it limiting qualification condition with respect to} (LQC wrt) $\{\mathcal C_1,\mathcal C_2\}$  at $\bar x$ if, for any $x_{ik}\xrightarrow{\Omega_i}\bar x$ and $x_{ik}^*\in \hat N_{\mathcal C_i}(x_{ik},\Omega_i)$ with $x_{ik}^*\to x_i^*$ for $i=1,2$,
$$
x_{1k}^*+x_{2k}^*\to 0\implies x_i^*= 0 \text{ \rm for } i=1,2.
$$

{(ii)} Let $\Omega:=\Omega_1\cap\Omega_2$ and $\mathcal C:=\mathcal C_1\cap\mathcal C_2$. We say that $\{\Omega_1,\Omega_2\}$ are {\it normal-densed} in $\{\mathcal C_1,\mathcal C_2\}$ at $\bar x$ if whenever there are sequences $x_{ik}\xrightarrow{\Omega_i\cap\mathcal C_i}\bar x, x_k\xrightarrow{\Omega\cap\tbd \mathcal C}\bar x$ and $x_{ik}^*\in \hat N(x_{ik},\Omega_i\cap\mathcal C_i)$ for $i=1,2$, $x_k^*\in \mathcal R(x_k,\mathcal C)$ satisfying $x_{ik}^*\to x_i^*$ with some $x_i^*$ for $i=1,2$ and $x_k^*\to x_{1}^*+x_{2}^*,$ we find sequences $\tilde x_{ik}\xrightarrow{\Omega_i\cap\mathcal C_i}\bar x$ and $\tilde x_{ik}^*\in \hat N_{\mathcal C_i}(\tilde x_{ik}, \Omega_i)$ for $i=1,2$  such that $\tilde x_{ik}^*\to \tilde x_i^*$ with some $\tilde x_i^*$ for $i=1,2$ and
 \begin{equation}\label{slqc-eq00}
     \tilde x_{1}^*+\tilde x_{2}^*=x_1^*+x_2^*
 \end{equation} and $\max\{\Vert \tilde x_{1}^*\Vert, \Vert \tilde x_{2}^*\Vert\}>0 \text{ \rm whenever } \max\{\Vert x_{1}^*\Vert, \Vert x_{2}^*\Vert\} >0.$
 }
\end{definition}

\begin{remark}\label{rem-slqc}
{\rm (i) In the case that $\mathcal C_i$ for $i=1,2$ are neighborhoods of $\bar x$, the limiting qualification condition with respect to $\{\mathcal C_1,\mathcal C_2\}$ reduces to the limiting qualification condition, which is presented in infinite dimension spaces in \cite[Definition~3.2~(ii)]{Mor06}.}

{\rm (ii)    In Definition~\ref{slqc}~(ii), if $x_{ik}\in {\rm int}\,\mathcal C_i$ for $i=1,2$, then $\mathcal R(x_{ik},\mathcal C_i)=\R^n$. Hence, $\{\Omega_1,\Omega_2\}$  are normal-densed in $\{\mathcal C_1,\mathcal C_2\}$ at $\bar x$.}

{\rm (iii) In Section~\ref{sec-app}, we will show that the constraint qualification in Definition~\ref{slqc}~(ii)  is {\it essential} to obtain computation rules for normal cones and to provide optimality conditions for \ref{mpecs} problem. The following example illustrates the qualification conditions in Definition~\ref{slqc}.
}
\end{remark}

%The following example illustrates that there are sets satisfying the qualification conditions in Definition~\ref{slqc}.

%%%%%
%%%%%
%%%%%
%%%%%
\begin{example}\label{exam1}
	Consider the following sets
	$$\Omega_1:=\left\{(x,y,z)\mid y\in \R, z\ge x \right\},\; \Omega_2:=\R^2_+\times \R,\;  \mathcal C_1:=\R^3, \text{ \rm and } \mathcal C:=\mathcal C_2:=\R_+\times\R^2.$$
	Then the following assertions hold:
	
	{\rm (i)} $\{\Omega_1,\Omega_2\}$ satisfy LQC wrt $\{\mathcal C,\mathcal C\}$ at $(0,0,0)$ and  $\{\Omega_1,\Omega_2\}$ are normal-densed in $\{\mathcal C,\mathcal C\}$ at $(0,0,0).$
	
	{\rm (ii)} $\{\Omega_1,\Omega_2\}$ satisfy LQC wrt $\{\mathcal C_1,\mathcal C_2\}$ at $(0,0,0)$  but $\{\Omega_1,\Omega_2\}$ are not  normal-densed in $\{\mathcal C_1,\mathcal C_2\}$ at $(0,0,0).$
	
	We first show (i). Let $(x,y,z)\in \Omega_i\cap\mathcal C$ for $i=1,2$ be sufficiently close $(0,0,0).$ By directly computation, we achieve:
	\begin{equation*}
	\hat N((x,y,z),\Omega_1\cap\mathcal C)	=\left\{(u,0,v)\left\vert
		\begin{matrix}
			(u,v)\in\R_-^2\cup\{(u,v)\mid 0\le u\le -v \} & \text{\rm if } (x,z)=0;\\
			0\le u=-v  &\text{\rm if } x=z>0;\\
			u\le 0, v=0 & \text{\rm if } x=0,z>0;\\
			(u,v)=(0,0)& \text{\rm if } z>x>0
		\end{matrix}\right.
		\right\}
	\end{equation*}
	
	\begin{equation}\label{exam1-norm1}
	\hat N_{\mathcal C}((x,y,z),\Omega_1)	=\left\{(u,0,v)\left\vert
		\begin{matrix}
			u=0, v\le 0 \text{ \rm or } 0\le u\le -v  & \text{ \rm if } (x,z)=0;\\
			0\le u=-v  &\text{ \rm if } x=z>0;\\
			(u,v)=(0,0)& \text{ \rm if } z>x\ge 0
		\end{matrix}\right.
		\right\}
	\end{equation}
	\begin{equation*}
		\hat N((x,y,z),\Omega_2\cap\mathcal C)	=
		\begin{cases}
			\R_-^2\times\{0\}&\text{ \rm if } x=y=0;\\
			\R_-\times\{0\}^2&\text{ \rm if } x=0,y>0;\\
			\{0\}\times\R_-\times\{0\}&\text{ \rm if } x>0,y=0;\\
			\{(0,0,0)\}&\text{ \rm if } x,y>0.
		\end{cases}
	\end{equation*}
	\begin{equation}\label{exam1-norm2}
		\hat N_{\mathcal C}((x,y,z),\Omega_2)	=
		\begin{cases}
			\{0\}\times\R_-\times\{0\}&\text{ \rm if }x\ge 0, y=0;\\
			\{(0,0,0)\}&\text{ \rm if } x\ge 0, y>0.
		\end{cases}
	\end{equation}
It is clearly, from \eqref{exam1-norm1} and \eqref{exam1-norm2}, that $\{\Omega_1,\Omega_2\}$ satisfy LQC wrt $\{\mathcal C,\mathcal C\}$ at $(0,0,0).$

To prove that $\{\Omega_1,\Omega_2\}$ are normal-densed in $\{\mathcal C,\mathcal C\}$ at $(0,0,0),$ we set $\Omega:=\Omega_1\cap\Omega_2.$  For any sequence $(x_k,y_k,z_k)\xrightarrow{\Omega\cap\tbd\mathcal C} (0,0,0),$ we have $x_k=0$ and thus $\mathcal R((x_k,y_k,z_k),\mathcal C) = \R_+\times\R^2.$ Take sequences $(x_{ik},y_{ik},z_{ik})\xrightarrow{\Omega_i\cap\mathcal C} (0,0,0)$, $(x^*_{ik},y^*_{ik},z^*_{ik})\in \hat N((x_{ik},y_{ik},z_{ik}),\Omega_i\cap\mathcal C)$ with $(x^*_{ik},y^*_{ik},z^*_{ik})\to (x_i^*,y_i^*,z_i^*)$ for some $(x_i^*,y_i^*,z_i^*)\in\R^3$ and $i=1,2,$ and $(x_k^*,y_k^*,z_k^*)\in \mathcal R((x_k,y_k,z_k),\mathcal C)$ satisfying $$(x^*_{k},y^*_{k},z^*_{k})\to  (x_1^*,y_1^*,z_1^*)+(x_2^*,y_2^*,z_2^*).$$ We will justify sequences $(\tilde x_{ik},\tilde y_{ik},\tilde z_{ik})\xrightarrow{\Omega_i\cap\mathcal C} (0,0,0)$, $(\tilde x^*_{ik},\tilde y^*_{ik},\tilde z^*_{ik})\in \hat N_{\mathcal C}((x_{ik},y_{ik},z_{ik}),\Omega_i)$ with $(\tilde x^*_{ik},\tilde y^*_{ik},\tilde z^*_{ik})\to (\tilde x_i^*,\tilde y_i^*,\tilde z_i^*)$ for some $(\tilde x_i^*,\tilde y_i^*,\tilde z_i^*)\in\R^3$ and $i=1,2,$ satisfying
\begin{align}
&\qquad\qquad\qquad (x_1^*,y_1^*,z_1^*)+(x_2^*,y_2^*,z_2^*)=  (\tilde x_1^*,\tilde y_1^*,\tilde z_1^*)+(\tilde x_2^*,\tilde y_2^*,\tilde z_2^*),\label{exam1eq1}\\
&\max\{\Vert(\tilde x_1^*,\tilde y_1^*,\tilde z_1^*)\Vert, \Vert(\tilde x_2^*,\tilde y_2^*,\tilde z_2^*)\Vert\}>0 \text{ \rm if }  \max\{\Vert(x_1^*,y_1^*,z_1^*)\Vert, \Vert(x_2^*,y_2^*,z_2^*)\Vert\}>0\label{exam1eq2}
\end{align}
This trivially holds if $\hat N((x_{ik},y_{ik},z_{ik}),\Omega_i\cap\mathcal C)=\hat N_{\mathcal C}((x_{ik},y_{ik},z_{ik}),\Omega_i)$ for $i=1,2$. Thus we only need to consider the following cases:
	\begin{itemize}
		\item[(1)] $(x_{2k},y_{2k})=(0,0), z_{2k}\to 0$. In this case, we have $(x^*_{2k},y^*_{2k},z^*_{2k})\in\R^2_-\times\{0\}$ which implies $x_2^*\le 0, y_2^*\le 0, z_2^*=0.$  Let us consider the following subcase.
		\item[(1.1)] $(x_{1k},z_{1k}) =(0,0)$ and $y_{1k}\to 0$.
		
		\item[(a)] If $(x_{1k}^*,y_{1k}^*,z_{1k}^*)\in \R_-\times\{0\}\times\R_-$ then we have $x_{1}^*=x_{2}^*=0.$ By taking $(\tilde x_{ik},\tilde y_{ik},\tilde z_{ik})=(x_{ik},y_{ik},z_{ik})$ and $(\tilde x_{ik}^*,\tilde y_{ik}^*,\tilde z_{ik}^*)=(0,y_{ik}^*,z_{ik}^*)\to (0,y_i^*,z_i^*)$ for $i=1,2$, we find sequences satisfying \eqref{exam1eq1} and \eqref{exam1eq2}.
		
		\item[(b)] If $0\le x_{1k}^*\le -z_{1k}^*$ then we take $(\tilde x_{ik},\tilde y_{ik},\tilde z_{ik})=(0,0,0)$ for $i=1,2$ and $(\tilde x_{2k}^*,\tilde y_{2k}^*, \tilde z_{2k}^*) = (0,y_{2k}^*,0)\to (0,y_2^*,0)$ and $(\tilde x_{1k}^*,\tilde y_{1k}^*,\tilde z_{1k}^*)=(x_{1}^*+x_{2}^*, 0,z_{1}^*).$ Let $\max\{\Vert (x_1^*,0,z_1^*)\Vert, \Vert (x_2^*,y_2^*,0)\Vert\}>0.$ We have the following two possibilities. If  $\Vert (x_1^*,0,z_1^*)\Vert>0$ then $\vert z_1^*\vert>0$ because  $0\le x_1^*\le -z_1^*.$ It implies that $\Vert (x_{1}^*+x_{2}^*, 0,z_{1}^*)\Vert>0.$ Otherwise, if $\Vert (x_2^*,y_2^*,0)\Vert>0$ then $\Vert(0,y_2^*,0)\Vert>0$  or $\vert x_2^*\vert>0$ which gives us that $\Vert (x_1^*+x_2^*,0,z_1^*)\Vert>0$ due to $x_1^*\ge 0.$ So, in these all cases, the sequences $(\tilde x_{ik},\tilde y_{ik},\tilde z_{ik})$ and $(\tilde x_{ik}^*,\tilde y_{ik}^*,\tilde z_{ik}^*)$ for $i=1,2$ satisfy \eqref{exam1eq1} and \eqref{exam1eq2}.
		\item[(1.2)]  $x_{1k}=z_{1k} >0$ and $y_{1k}\in\R$. We have $0\le x^*_{1k}=-z_{1k}^*, y_{1k}^*=0$ and so $0\le x_1^*=-z_1^*, y_1^*=0$. Take $\tilde x_{1k}=\tilde z_{1k}=0, \tilde y_{1k}=y_{1k}$, $(\tilde x_{2k},\tilde y_{2k},\tilde z_{2k}) =(x_{2k},y_{2k},z_{2k})$  and $\tilde x_{1k}^*=x_{1k}^*+x_{2k}^*$, $\tilde y_{1k}^*=0$, $\tilde z_{1k}^*=z_{1k}^*\le -\tilde x_{1k}^*$, $\tilde x_{2k}^*=0,$ $\tilde y_{2k}^*=y_{2k}^*$, $\tilde z_{2k}^*=0$.  Then $(\tilde x_{ik}^*, \tilde y_{ik}^*,\tilde z_{ik}^*)\in \hat N_{\mathcal C}((\tilde x_{ik}, \tilde y_{ik},\tilde z_{ik}),\Omega_i)$ for $i=1,2$, $(\tilde x_{1k}^*, \tilde y_{1k}^*,\tilde z_{1k}^*)\to (x_1^*+x_2^*,0,z_1^*),$ $(\tilde x_{2k}^*, \tilde y_{2k}^*,\tilde z_{2k}^*)\to (0,y_2^*,0)$ and $$(x_1^*+x_2^*,0,z_1^*)+(0,y_2^*,0)= (x_1^*,0,z_1^*)+(x_2^*,y_2^*,0).$$ Moreover, if $\max\{\Vert (x_{i}^*,y_i^*, z_i^*)\Vert\mid i=1,2\}>0$ then by the similar arguments as (1.1. (b)), we have $\max\{\Vert (\tilde x_{i}^*, \tilde y_i^*, \tilde z_i^*)\Vert\mid i=1,2\}>0.$ Thus the above sequences satisfy \eqref{exam1eq1} and \eqref{exam1eq2}.
		\item[(1.3)] $x_{1k}=0,z_{1k} >0$ and $y_{1k}\to 0$. In this case, we have $x_1^*=x_2^*=0.$ Using similar arguments as (1.1. (a)), we have sequences satsifying \eqref{exam1eq1} and \eqref{exam1eq2}.
		\item[(1.4)] $x_{1k}>0,z_{1k} >0$ and $y_{1k}\to 0$. We have $x_{1k}^*=z_{1k}^*=0$ and thus $x_1^*=y_1^*=z_1^*=0.$  It implies that $x_2^*=0.$ It is easy to see that the sequences $(\tilde x_{ik},\tilde y_{ik},\tilde z_{ik})=(x_{ik},y_{ik},z_{ik})$ and $(\tilde x_{1k}^*,\tilde y_{1k}^*,\tilde z_{1k}^*)=(0,0,0)$, $(\tilde x_{2k}^*,\tilde y_{2k}^*,\tilde z_{2k}^*)=(0,y_{2k}^*,z_{2k}^*)$ satisfy \eqref{exam1eq1} and \eqref{exam1eq2}.
		\item[(2)] $x_{2k}=0, y_{2k}>0$ is proven in a similar way to (1).
		\item[(3)] $x_{2k}>0, y_{2k}=0$. In this case, $\hat N((x_{2k},y_{2k},z_{2k}),\Omega_2\cap\mathcal C)=\{0\}\times\R_-\times\{0\}.$ We only need to consider the following subcases:
		\item[(3.1)] $x_{1k}=z_{1k}=0$, $y_{1k}\to 0$ and $(x_{1k}^*,z_{1k}^*)\in\R_-^2.$ In this case, we have $x_1^*\le 0$ and $x_2^*\le 0.$ Since $\R_+\ni x_k^*\to x_1^*+x_2^*,$ we obtain $x_{1}^*=x_{2}^*=0$ and thus we can use the similar arguments as in (1.1. (a)).
		\item[(3.2)] $x_{1k}=0,z_{1k}>0$, $y_{1k}\to 0$ and $x_{1k}^*<0,z_{1k}^*=0.$ It also implies that $x_1^*=x_2^*=0$ and thus we use the arguments of (1.1. (a)) again.
		\item[(4)] $x_{2k},y_{2k}>0, z_{2k}\to 0.$ In this case, $x_{2k}^*=y_{2k}^*=z_{2k}^*=0$ and so $x_2^*=y_2^*=z_2^*=0.$ %Using the similar arguments as in (3).
		\item[(4.1)] $x_{1k}=z_{1k}=0$, $y_{1k}\to 0$ and $(x_{1k}^*,z_{1k}^*)\in\R_-^2.$ In this case, we have $x_1^*\le 0.$  Since $\R_+\ni x_k^*\to x_1^*+x_2^*,$ we obtain $x_{1}^*=0$ also. Taking $(\tilde x_{ik},\tilde y_{ik},\tilde z_{ik}) = (x_{ik}, y_{ik}, z_{ik})$ for $i=1,2$ and $\tilde x_{1k}^*=\tilde x_{2k}^*=0, \tilde y_{1k}^*=y_{1k}^*, \tilde y_{2k}^*=0, \tilde z_{1k}^*=z_{1k}^*, \tilde z_{2k}^*=0$ and using the similar arguments as in (1.1. (a)), we derive that these sequences satisfy \eqref{exam1eq1} and \eqref{exam1eq2}.
		\item[(4.2)] $x_{1k}=0,z_{1k}>0$, $y_{1k}\to 0$ and $x_{1k}^*<0,z_{1k}^*=0.$ Using the similar arguments as in (4.1), we find sequences satisfying \eqref{exam1eq1} and \eqref{exam1eq2}.
			\end{itemize}
		From the cases (1)-(4), we alway find  the sequences $(\tilde x_{ik},\tilde y_{ik},\tilde z_{ik})$ and $(\tilde x_{ik}^*,\tilde y_{ik}^*,\tilde z_{ik}^*)$ for $i=1,2$ satisfy \eqref{exam1eq1} and \eqref{exam1eq2} and hence (i) holds.
		
		\medskip
		
		We remain to prove (ii). To see this, we observe that
		\begin{equation}\label{exam1-norm3}
			\hat N_{\mathcal C_1}((x,y,z),\Omega_1)=\hat N((x,y,z),\Omega_1\cap \mathcal C_1) = \left\{(u,0,v)\mid 0\le u=-v\right\},
			\end{equation}
		and
		\begin{equation*}
			\hat N((x,y,z),\Omega_2\cap\mathcal C_2)	=
			\begin{cases}
				\R_-^2\times\{0\}&\text{ \rm if } x=y=0;\\
				\R_-\times\{0\}^2&\text{ \rm if } x=0,y>0;\\
				\{0\}\times\R_-\times\{0\}&\text{ \rm if } x>0,y=0;\\
				\{(0,0,0)\}&\text{ \rm if } x,y>0.
			\end{cases}
		\end{equation*}
		\begin{equation} \label{exam1-norm4}
			\hat N_{\mathcal C_2}((x,y,z),\Omega_2)	=
			\begin{cases}
				\{0\}\times\R_-\times\{0\}&\text{ \rm if }x\ge 0, y=0;\\
				\{(0,0,0)\}&\text{ \rm if } x\ge 0, y>0
			\end{cases}
		\end{equation} for any $(x,y,z)$ is sufficiently close $(0,0,0).$ From \eqref{exam1-norm3} and \eqref{exam1-norm4}, we obtain that $\{\Omega_1,\Omega_2\}$ satisfy LQC wrt $\{\mathcal C_1,\mathcal C_2\}$ at $(0,0,0).$ We will show that $\{\Omega_1,\Omega_2\}$ are not  normal-densed in $\{\mathcal C_1,\mathcal C_2\}$ at $(0,0,0).$

	Taking $(x_k,y_k,z_k)=(0,0,0)$ and $(x_{ik},y_{ik},z_{ik})=(0,0,0)$ for $i=1,2$. Then
		$$\hat N_{\mathcal C_1}((x_{1k},y_{1k},z_{1k}),\Omega_1)=\hat N((x_{1k},y_{1k},z_{1k}),\Omega_1\cap \mathcal C_1) = \left\{(u,0,v)\mid 0\le u=-v\right\},$$
		and
		$$\hat N((x_{2k},y_{2k},z_{2k}),\Omega_2\cap \mathcal C_2)=\R_-^2\times\{0\},\;
		\hat N_{\mathcal C_2}((x_{2k},y_{2k},z_{2k}),\Omega_2) =\{0\}\times\R_-\times\{0\}.$$
		Picking $x_{1k}^*=2,y_{1k}^*=0,z_{1k}^*=-2$ and $x_{2k}^*=-1,y_{2k}^*=0,z_{2k}^*=0.$ Then $(x_{ik}^*,y_{ik}^*,z_{ik}^*)\in \hat N((x_{ik},y_{ik},z_{ik}),\Omega_i\cap\mathcal C_i)$ for $i=1,2$ and $(x_{1k}^*,y_{1k}^*,z_{1k}^*)\to (2,0,-2)$, $(x_{2k}^*,y_{2k}^*,z_{2k}^*)\to (-1,0,0).$ It implies that $$(x_1^*,y_1^*,z_1^*)+(x_2^*,y_2^*,z_2^*)=(1,0,-2).$$
		For any $(\tilde x_{ik},\tilde y_{ik},\tilde z_{ik})\xrightarrow{\Omega_i\cap\mathcal C_i}(0,0,0)$ and $(\tilde x^*_{ik},\tilde y^*_{ik},\tilde z^*_{ik})\in \hat N_{\mathcal C_i}((\tilde x_{ik},\tilde y_{ik},\tilde z_{ik}),\Omega_i)$ for $i=1,2$, we have $\tilde x_{2k}^*= 0, z_{2k}^*=0$ and thus $\tilde x_2^*=0, z_2^*=0.$ It follows that $\tilde x_{1k}^*\to \tilde x_1^*=1$ and so $\tilde z_{1k}^*\to \tilde z_1^*=-1.$ Therefore,
		$$(\tilde x_1^*,\tilde y_1^*,\tilde z_1^*)+(\tilde x_2^*,\tilde y_2^*,\tilde z_2^*)\ne (1,0,-2)$$ and so we can not find sequences $(\tilde x_{ik},\tilde y_{ik},\tilde z_{ik})\xrightarrow{\Omega_i\cap\mathcal C_i}(0,0,0)$ and $(\tilde x^*_{ik},\tilde y^*_{ik},\tilde z^*_{ik})$ satisfying \eqref{exam1eq1} and \eqref{exam1eq2}.

\end{example}

\begin{theorem}\label{thm-intersec-rule}
Let $\Omega_i,\mathcal C_i$ for $i=1,2$ be subsets of $\R^n$. Denote $\Omega:=\Omega_1\cap\Omega_2$ and $\mathcal C:=\mathcal C_1\cap\mathcal C_2$. Let $\{\Omega_1,\Omega_2\}$ satisfy LQC wrt $\{\mathcal C_1,\mathcal C_2\}$ at $\bar x\in\Omega\cap\mathcal C,$ and let $\{\Omega_1,\Omega_2\}$ be normal-densed in $\{\mathcal C_1,\mathcal C_2\}$ at $\bar x.$ Then the following inclusion holds.
\begin{equation}\label{thm-inter-rule-eq0}
N_{\mathcal C}(\bar x,\Omega)\subset N_{\mathcal C_1}(\bar x,\Omega_1)+N_{\mathcal C_2}(\bar x,\Omega_2).\end{equation}

Consequently, if $\mathcal C_2$ is a neighborhood of $\bar x$, then
\begin{equation}\label{thm-inter-rule-eq00}
N_{\mathcal C}(\bar x,\Omega)=N_{\mathcal C_1}(\bar x,\Omega)\subset N_{\mathcal C_1}(\bar x,\Omega_1)+N(\bar x,\Omega_2).
\end{equation}
\end{theorem}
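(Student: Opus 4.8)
\emph{Strategy.} The plan is to run the classical argument for the intersection rule of limiting normal cones, routing the restriction to $\mathcal C$ through Proposition~\ref{pro1}(i) and using the two qualification conditions to keep the auxiliary Fr\'echet normals in usable form. I would fix $x^*\in N_{\mathcal C}(\bar x,\Omega)$ and, by \eqref{limiting-normal-cone}, choose $x_k\xrightarrow{\Omega\cap\mathcal C}\bar x$, $x_k^*\to x^*$ with $x_k^*\in\hat N_{\mathcal C}(x_k,\Omega)$; by Proposition~\ref{pro1}(i) this means $x_k^*\in\hat N(x_k,\Omega\cap\mathcal C)\cap\mathcal R(x_k,\mathcal C)$. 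Since $\Omega\cap\mathcal C=(\Omega_1\cap\mathcal C_1)\cap(\Omega_2\cap\mathcal C_2)$ is a finite intersection of sets locally closed around $\bar x$, it is locally closed there, so the fuzzy intersection rule for Fr\'echet normal cones (see, e.g., \cite{Mor06}) applies at each $x_k$: for a chosen $\varepsilon_k\to 0^+$ it produces $x_{ik}\in(\Omega_i\cap\mathcal C_i)\cap\B(x_k,\varepsilon_k)$ and $x_{ik}^*\in\hat N(x_{ik},\Omega_i\cap\mathcal C_i)$ with $\Vert x_k^*-x_{1k}^*-x_{2k}^*\Vert\le\varepsilon_k$; in particular $x_{ik}\xrightarrow{\Omega_i\cap\mathcal C_i}\bar x$. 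As $\mathcal C$ is convex, after passing to a subsequence I may assume that either $x_k\in\ir\mathcal C$ for all $k$, or $x_k\in\bd\mathcal C$ for all $k$.

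\emph{Transferring the dual elements to the form $\hat N_{\mathcal C_i}(\cdot,\Omega_i)$ and a boundedness estimate.} A priori $x_{ik}^*$ lies only in $\hat N(x_{ik},\Omega_i\cap\mathcal C_i)$, so neither LQC wrt $\{\mathcal C_1,\mathcal C_2\}$ nor the definition \eqref{limiting-normal-cone} of $N_{\mathcal C_i}$ applies to it yet. If $x_k\in\ir\mathcal C$, then shrinking $\varepsilon_k$ so that $\B(x_k,\varepsilon_k)$ lies well inside $\ir\mathcal C$ keeps $x_{ik}\in\ir\mathcal C_i$, hence $\mathcal R(x_{ik},\mathcal C_i)=\R^n$ and $\hat N(x_{ik},\Omega_i\cap\mathcal C_i)=\hat N_{\mathcal C_i}(x_{ik},\Omega_i)$ by Proposition~\ref{pro1}(i); the conversion is free. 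If $x_k\in\bd\mathcal C$, then also $x_k\xrightarrow{\Omega\cap\tbd\mathcal C}\bar x$ and $x_k^*\in\mathcal R(x_k,\mathcal C)$, and the normal-densed hypothesis is precisely the device that performs the conversion. In both regimes I then show $(x_{ik}^*)$ is bounded: if $t_k:=\Vert x_{1k}^*\Vert+\Vert x_{2k}^*\Vert\to\infty$ along a subsequence, set $y_{ik}^*:=t_k^{-1}x_{ik}^*$, which stays in the relevant cone, has $\Vert y_{1k}^*\Vert+\Vert y_{2k}^*\Vert=1$, and satisfies $\Vert y_{1k}^*+y_{2k}^*\Vert\le t_k^{-1}(\Vert x_k^*\Vert+\varepsilon_k)\to 0$; after a further subsequence $y_{ik}^*\to y_i^*$ with $\max\{\Vert y_1^*\Vert,\Vert y_2^*\Vert\}>0$ and $y_1^*+y_2^*=0$. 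In the interior regime $y_{ik}^*\in\hat N_{\mathcal C_i}(x_{ik},\Omega_i)$, so LQC wrt $\{\mathcal C_1,\mathcal C_2\}$ forces $y_1^*=y_2^*=0$, a contradiction. In the boundary regime, $y_{ik}^*\in\hat N(x_{ik},\Omega_i\cap\mathcal C_i)$, $t_k^{-1}x_k^*\in\mathcal R(x_k,\mathcal C)$ and $t_k^{-1}x_k^*\to 0=y_1^*+y_2^*$, so the normal-densed condition yields $\tilde x_{ik}\xrightarrow{\Omega_i\cap\mathcal C_i}\bar x$ and $\tilde y_{ik}^*\in\hat N_{\mathcal C_i}(\tilde x_{ik},\Omega_i)$ with $\tilde y_{ik}^*\to\tilde y_i^*$, $\tilde y_1^*+\tilde y_2^*=0$ and $\max\{\Vert\tilde y_1^*\Vert,\Vert\tilde y_2^*\Vert\}>0$; then LQC wrt $\{\mathcal C_1,\mathcal C_2\}$ again forces $\tilde y_i^*=0$, a contradiction. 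Hence $(x_{ik}^*)$ is bounded.

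\emph{Passing to the limit and the corollary.} After a subsequence $x_{ik}^*\to x_i^*$ with $x_1^*+x_2^*=x^*$. In the interior regime, $x_{ik}^*\in\hat N_{\mathcal C_i}(x_{ik},\Omega_i)$ and $x_{ik}\xrightarrow{\Omega_i\cap\mathcal C_i}\bar x$ give $x_i^*\in N_{\mathcal C_i}(\bar x,\Omega_i)$ directly from \eqref{limiting-normal-cone}. In the boundary regime, applying the normal-densed condition once more, now to the convergent data $(x_{ik},x_k,x_{ik}^*,x_k^*)$, yields $\tilde x_{ik}\xrightarrow{\Omega_i\cap\mathcal C_i}\bar x$ and $\tilde x_{ik}^*\in\hat N_{\mathcal C_i}(\tilde x_{ik},\Omega_i)$ with $\tilde x_{ik}^*\to\tilde x_i^*$ and $\tilde x_1^*+\tilde x_2^*=x_1^*+x_2^*=x^*$, so $\tilde x_i^*\in N_{\mathcal C_i}(\bar x,\Omega_i)$ by \eqref{limiting-normal-cone}. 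In either case $x^*\in N_{\mathcal C_1}(\bar x,\Omega_1)+N_{\mathcal C_2}(\bar x,\Omega_2)$, which is \eqref{thm-inter-rule-eq0}. For \eqref{thm-inter-rule-eq00}, if $\mathcal C_2$ is a neighborhood of $\bar x$, fix $r>0$ with $\B(\bar x,r)\subset\mathcal C_2$; for $x\in\B(\bar x,r/2)$ one has $x\in\ir\mathcal C_2$, so $\mathcal R(x,\mathcal C_2)=\R^n$ and $\Omega_2\cap\mathcal C_2$ (resp. $\mathcal C$) agrees with $\Omega_2$ (resp. $\mathcal C_1$) near $x$, whence $\hat N_{\mathcal C_2}(x,\Omega_2)=\hat N(x,\Omega_2)$ and $\hat N_{\mathcal C}(x,\Omega)=\hat N_{\mathcal C_1}(x,\Omega)$; taking the outer limit in \eqref{limiting-normal-cone} gives $N_{\mathcal C_2}(\bar x,\Omega_2)=N(\bar x,\Omega_2)$ and $N_{\mathcal C}(\bar x,\Omega)=N_{\mathcal C_1}(\bar x,\Omega)$, and \eqref{thm-inter-rule-eq0} then yields \eqref{thm-inter-rule-eq00}.

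\emph{Main obstacle.} The crux is the conversion of Fr\'echet normals of $\Omega_i\cap\mathcal C_i$ into Fr\'echet normals of $\Omega_i$ with respect to $\mathcal C_i$ along sequences that approach $\bd\mathcal C$: the fuzzy rule cannot by itself supply elements of $\mathcal R(\cdot,\mathcal C_i)$, and this is exactly what the normal-densed hypothesis is designed to remedy. Note that it is invoked twice — once inside the normalization argument to secure boundedness of $(x_{ik}^*)$, and once afterwards to read off the final decomposition — and its ``$\max$-positivity'' clause is what makes the contradiction with LQC wrt $\{\mathcal C_1,\mathcal C_2\}$ bite. Everything else (the reduction via Proposition~\ref{pro1}(i), the fuzzy intersection rule, and the localization in the corollary) is routine.
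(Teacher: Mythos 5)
Your overall architecture (reduce to Fr\'echet normals of $\Omega_i\cap\mathcal C_i$ via Proposition~\ref{pro1}(i), split into the interior and boundary regimes, convert to $\hat N_{\mathcal C_i}(\cdot,\Omega_i)$ either freely or through the normal-densed hypothesis, and use LQC to kill degenerate limits) is the right one, but there is a genuine gap at the very first technical step: the fuzzy intersection rule you invoke is not the one available in \cite{Mor06}. Lemma~3.1 there produces, for each $\epsilon_k$, a multiplier $\lambda_k\ge 0$ together with $x_{ik}\in\Omega_i\cap\mathcal C_i$ near $x_k$ and $x_{ik}^*\in \hat N(x_{ik},\Omega_i\cap\mathcal C_i)$ satisfying
\begin{equation*}
\Vert \lambda_k x_k^*-x_{1k}^*-x_{2k}^*\Vert\le 2\epsilon_k,\qquad 1-\epsilon_k\le \max\{\lambda_k,\Vert x_{1k}^*\Vert\}\le 1+\epsilon_k,
\end{equation*}
and the multiplier may well be zero. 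Your statement, with $\lambda_k\equiv 1$ and no normalization, is a strictly stronger assertion that cannot be quoted from the cited source and is not otherwise justified: without any qualification condition, a Fr\'echet normal to an intersection need not admit an approximate additive decomposition into Fr\'echet normals of the two sets, and the possible degeneracy $\lambda_k=0$ is precisely the difficulty that the hypotheses (LQC wrt $\{\mathcal C_1,\mathcal C_2\}$ and normal-densedness) are there to exclude. Since your entire boundedness/rescaling argument operates on the unproved decomposition $\Vert x_k^*-x_{1k}^*-x_{2k}^*\Vert\le\varepsilon_k$, the proof has a hole at its foundation, even though everything built on top of it is structurally sound.

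The encouraging part is that your rescaling argument is essentially the missing piece in disguise. The paper keeps the multiplier: boundedness of $(\lambda_k,x_{1k}^*,x_{2k}^*)$ is automatic from the normalization, so one passes to limits $\lambda_k\to\lambda$, $x_{ik}^*\to x_i^*$ with $x_1^*+x_2^*=\lambda x^*$; the conversion to $\hat N_{\mathcal C_i}$-normals is done exactly as you do it (free in the interior regime via $\mathcal R(\cdot,\mathcal C_i)=\R^n$, and via normal-densedness -- applied with $\lambda_k x_k^*\in\mathcal R(x_k,\mathcal C)$, which is legitimate since $\mathcal R(x_k,\mathcal C)$ is a cone -- in the boundary regime); then LQC together with the max-positivity clause rules out $\lambda=0$, which is the same contradiction you run, because $\lambda_k\to 0$ would force $\Vert x_{1k}^*\Vert\to 1$ while the converted sequences would have sums tending to $0$. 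Finally one divides by $\lambda>0$, permissible since the sets $N_{\mathcal C_i}(\bar x,\Omega_i)$ are cones. If you replace your $\lambda$-free step by the correct form of the lemma and recast ``boundedness of $(x_{ik}^*)$'' as ``$\lambda>0$'', your argument becomes correct and essentially coincides with the paper's; your treatment of the interior/boundary dichotomy, the two invocations of normal-densedness, and the localization giving \eqref{thm-inter-rule-eq00} are all fine as written.
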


\begin{proof} If $\bar x\in \ir\mathcal C$, then $\mathcal R(x,\mathcal C)=\R^n$ for any $x$ close $\bar x$ enough. Thus,
$$
N_{\mathcal C}(\bar x,\Omega)=N(\bar x,\Omega_1\cap\Omega_2), N_{\mathcal C_1}(\bar x,\Omega_1)=N(\bar x,\Omega_1) \text{ \rm and } N_{\mathcal C_2}(\bar x,\Omega_2)=N(\bar x,\Omega_2).
$$
By \cite[Theorem~3.4]{Mor06}, we obtain   relation~\eqref{thm-inter-rule-eq0}.

If $\bar x\in\bd \mathcal C$, then by taking $x^*\in N_{\mathcal C}(\bar x,\Omega_1\cap\Omega_2),$ one sees that there exists by the definition sequences $x_{k}\xrightarrow{\Omega\cap\mathcal C} \bar x$ and $x_k^*\to x^*$ such that $x_k^*\in \hat N(x_k,\Omega\cap\mathcal C)\cap\mathcal R(x_k,\mathcal C).$

We consider the following two case.

Case 1. $x_k\in \bd \mathcal C.$ For any sequence $\epsilon_k\to 0^+,$ by using   fuzzy intersection rule \cite[Lemma 3.1]{Mor06}, we find sequences $\lambda_k\in \R_+,$ $x_{ik}\in \Omega_i\cap\mathcal C$ and $x_{ik}^*\in \hat N(x_{ik},\Omega_i\cap\mathcal C)$ satisfying $\Vert x_{ik}-x_k \Vert\le \epsilon_k$ and
\begin{equation}\label{thm-inter-rule-eq1ii}
\Vert x_{1k}^*+x_{2k}^*-\lambda_kx_k^*\Vert\le 2\epsilon_k,\quad 1-\epsilon_k\le \max\{\lambda_k,\Vert x_{1k}^*\Vert\}\le 1+\epsilon_k,
\end{equation}
for $i=1,2.$ Since sequences $\lambda_k, x_{1k}^*$ are bounded, $x_{2k}^*$ is bounded due to \eqref{thm-inter-rule-eq1ii} and then without loss of generality, we assume that $\lambda_k\to \lambda, x_{1k}^*\to x_1^*$ and $x_{2k}^*\to x_2^*.$ Passing to the limit as $k\to \infty$ of the first inequality in \eqref{thm-inter-rule-eq1ii}, we obtain  $x_1^*+x_2^*=\lambda x^*.$ Since $\{\Omega_1,\Omega_2\}$ are normal-densed in $\{\mathcal C_1,\mathcal C_2\}$ at $\bar x,$ there exist $\tilde x_{ik}\xrightarrow{\Omega_i\cap\mathcal C_i} \bar x,$ $\tilde x_{ik}^*\in \hat N_{\mathcal C_i}(\tilde x_{ik},\Omega_i)$ for $i=1,2$ such that $\tilde x_{ik}^*\to \tilde x_i^*$ with some $\tilde x_i^*$ for $i=1,2$ and \begin{equation}\label{intersection-thm-eq001}
\lambda x^* = \tilde x_{1}^*+\tilde x_{2}^*,
\end{equation}
and that
\begin{equation} \label{intersection-thm-eq002}
\max\{\Vert\tilde x_{1}^*\Vert,\Vert\tilde x_{2}^*\Vert\}>0 \text{ \rm whenever } \max\{\Vert x_{1}^*\Vert,\Vert x_{2}^*\Vert\}>0,
\end{equation}
which  implies that $\lambda> 0$ because if in the opposite that $\lambda=0$ then $\lambda_k\to 0$ as $k\to \infty$. Since $\{\Omega_1,\Omega_2\}$ satisfy the LQC wrt $\{\mathcal C_1,\mathcal C_2\}$ at $\bar x$, we obtain from \eqref{intersection-thm-eq001} that $\tilde x_1^*=\tilde x_2^*=0$, which implies that $x_{i}^*\to 0$ for $i=1,2$  due to \eqref{intersection-thm-eq002}. This is a contradiction to the second inequality of \eqref{thm-inter-rule-eq1ii}.

Case 2. $x_k\in \ir\mathcal C.$ In this case, there exists $r_k>0$ such that $\B(x_k,r_k)\subset \mathcal C.$ Since $x_k\to \bar x\in \bd\mathcal C,$ $r_k\to 0^+.$ Setting $\tilde \epsilon_k:=\frac{r_k}{3},$ we obtain  $\tilde\epsilon_k\to 0^+$ also. By using the fuzzy intersection rule \cite[Lemma 3.1]{Mor06}, we find sequences $\tilde\lambda_k\in \R_+,$ $\tilde x_{ik}\in \Omega_i\cap\mathcal C_i$ and $\tilde x_{ik}^*\in \hat N(\tilde x_{ik},\Omega_i\cap\mathcal C_i)$ satisfying $\Vert \tilde x_{ik}-x_k \Vert\le \tilde\epsilon_k$ and
\begin{equation}\label{thm-inter-rule-eq2}
\Vert \tilde x_{1k}^*+\tilde x_{2k}^*-\tilde\lambda_kx_k^*\Vert\le 2\tilde\epsilon_k,\quad 1-\tilde\epsilon_k\le \max\{\tilde\lambda_k,\Vert \tilde x_{1k}^*\Vert\}\le 1+\tilde\epsilon_k,
\end{equation}
for $i=1,2,$ which  implies from $\Vert \tilde x_{ik}-x_k \Vert\le \tilde\epsilon_k$ that $\tilde x_{ik}\in \ir \mathcal C_i$ for $i=1,2$ and then $\mathcal R(\tilde x_{ik},\mathcal C_i)=\R^n.$ Therefore, $\hat N(x_k,\Omega\cap\mathcal C)=\hat N_{\mathcal C}(x_k,\Omega)$ and $\hat N(\tilde x_{ik},\Omega_i\cap\mathcal C_i)=\hat N_{\mathcal C_i}(\tilde x_{ik},\Omega_i)$ for $i=1,2.$

Thus, in the both cases, there exist sequences $x_{ik}\xrightarrow{\Omega_i\cap\mathcal C_i}\bar x$ and $\tilde x_{ik}^*\in \hat N_{\mathcal C_i}(x_{ik},\Omega_i)$ for $i=1,2$ satisfying $$\tilde x_{1k}^*+\tilde x_{2k}^*-\lambda_kx_k^*\to 0.$$ This follows that
$$
\lambda x^*\in N_{\mathcal C_1}(\bar x,\Omega_1)+N_{\mathcal C_2}(\bar x,\Omega_2),
$$
which implies  relation~\eqref{thm-inter-rule-eq0}.

It remains to consider the cases that  $\mathcal C_2$ is a neighborhood of $\bar x.$ In this case, we observe that $\hat N(x_{k},\Omega\cap\mathcal C)=\hat N(x_k,\Omega\cap\mathcal C_1)$ and $\mathcal R(x_k,\mathcal C)=\mathcal R(x_k,\mathcal C_1)$ and then $N_{\mathcal C}(\bar x,\Omega)=N_{\mathcal C_1}(\bar x,\Omega).$ Moreover, we also have $N_{\mathcal C_2}(\bar x,\Omega_2)=N(\bar x,\Omega_2).$ Thus we have the relations in  \eqref{thm-inter-rule-eq00}.
\end{proof}

In Theorem~\ref{thm-intersec-rule}, the assumption that $\{\Omega_1,\Omega_2\}$ are normal-densed in $\{\mathcal C_1,\mathcal C_2\}$ at $\bar x$  is {\it essential}. Indeed, we consider the following example.

\begin{example}\label{exam2}
Consider $\Omega_1,\Omega_2$ and $\mathcal C, \mathcal C_1,\mathcal C_2$ as in Example~\ref{exam1}. It is known from Example~\ref{exam1} that $\{\Omega_1,\Omega_2\}$ satisfy the LQC wrt $\{\mathcal C,\mathcal C\}$ at $\bar z=(0,0,0),$ and $\{\Omega_1,\Omega_2\}$ are normal-densed $\{\mathcal C,\mathcal C\}$ at $\bar z.$ Thus we obtain
\begin{equation}\label{exam2-eq0}
N_{\mathcal C}(\bar z,\Omega_1\cap\Omega_2)\subset N_{\mathcal C}(\bar z,\Omega_1)+N_{\mathcal C}(\bar z,\Omega_2).
\end{equation}
By directly computation (see Example~\ref{exam1}), we have
$$
N_{\mathcal C}(\bar z,\Omega_1\cap\Omega_2)=N_{\mathcal C}(\bar z,\Omega_1) \subset N_{\mathcal C}(\bar z,\Omega_1)+ N_{\mathcal C}(\bar z,\Omega_2)$$
which suits  perfectly with \eqref{exam2-eq0}.

However, by directly computation (see Example~\ref{exam1}), we also have
\begin{equation*}
	N_{\mathcal C_1\cap\mathcal C_2}((0,0,0),\Omega_1\cap\Omega_2)	=\left\{(u,0,v)\mid
	 0\le u\le -v
	\right\}
\end{equation*} while
\begin{align*}N_{\mathcal C_1}((0,0,0),\Omega_1)+ N_{\mathcal C_2}((0,0,0),\Omega_2) = \left\{(u,w,-u)\mid u\ge 0, w\le 0 \right\}.
	\end{align*} Therefore,
$$
N_{\mathcal C_1\cap\mathcal C_2}(\bar z,\Omega_1\cap\Omega_2)\nsubseteq N_{\mathcal C_1}(\bar z,\Omega_1)+ N_{\mathcal C_2}(\bar z,\Omega_1).$$ The reason is that $\{\Omega_1,\Omega_2\}$ are not normal-densed $\{\mathcal C_1,\mathcal C_2\}$ at $\bar z$ (see Example~\ref{exam1}~(ii)).

\end{example}
%\begin{figure}[ht!]
   % \centering
    %\includegraphics{TQ02_fig_01.pdf}
    %\caption{Sum rule of limiting normal cones with respect to a set}
    %\label{fig:01}
%\end{figure}
%
%
%
\begin{theorem}
Let $\bar x\in F^{-1}(\Theta)\cap\mathcal C$, where $F:\R^n\rightrightarrows \R^m$ is a closed-graph mapping. Assume that $\Theta$ is a closed set and $\mathcal C$ is closed and convex. Let the multifunction $x\mapsto F(x)\cap\Omega$ be inner semicompact at $\bar x$ with respect to $\mathcal C$ and the following qualification conditions be fulfilled, for every $\bar y\in F(\bar x)\cap\Theta,$

{\rm (i)} $N(\bar y,F(\Theta))\cap{\rm ker}\,D^*F^{\mathcal C}(\bar x,\bar y)=\{0\};$

{\rm (ii)} $\{\gph F,\R^n\times\Theta\}$ are normal-densed in $\{\mathcal C\times\R^m,\R^{n+m}\}$ at $(\bar x,\bar y).$

Then we have the following inclusion:
\begin{equation}\label{composite-rule-eq}
N_{\mathcal C}(\bar x,F^{-1}(\Theta))\subset \bigcup\left\{D^*_{\mathcal C}F(\bar x,\bar y)(y^*)\mid y^*\in N(\bar y,\Theta), \bar y\in F(\bar x)\cap\Theta\right\}.
\end{equation}
\end{theorem}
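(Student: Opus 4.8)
The plan is to realize $F^{-1}(\Theta)$ as the projection onto $\R^n$ of $\gph F\cap(\R^n\times\Theta)$, to lift a normal $x^*\in N_{\mathcal C}(\bar x,F^{-1}(\Theta))$ to the normal $(x^*,0)$ of that intersection in $\R^n\times\R^m$, to apply the intersection rule of Theorem~\ref{thm-intersec-rule} there, and finally to read off the limiting coderivative with respect to $\mathcal C$. (Throughout I read condition~(i) as $N(\bar y,\Theta)\cap\ker D^*_{\mathcal C}F(\bar x,\bar y)=\{0\}$.)

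First I would fix $x^*\in N_{\mathcal C}(\bar x,F^{-1}(\Theta))$ and, by Definition~\ref{normal-cone-wrt}, pick $x_k\xrightarrow{F^{-1}(\Theta)\cap\mathcal C}\bar x$ and $x_k^*\to x^*$ with $x_k^*\in\hat N_{\mathcal C}(x_k,F^{-1}(\Theta))$. Since $x_k\in F^{-1}(\Theta)\cap\mathcal C$, there is $y_k\in F(x_k)\cap\Theta$, and inner semicompactness of $x\mapsto F(x)\cap\Theta$ with respect to $\mathcal C$ at $\bar x$ lets me pass to a subsequence along which $y_k\to\bar y$; the closed-graph property of $F$ together with closedness of $\Theta$ then give $\bar y\in F(\bar x)\cap\Theta$. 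The first genuine step is the lifting: I would show $(x_k^*,0)\in\hat N_{\mathcal C\times\R^m}\big((x_k,y_k),\gph F\cap(\R^n\times\Theta)\big)$. The ``$\mathcal R$-part'' is immediate because $x_k+px_k^*\in\mathcal C$ forces $(x_k,y_k)+p(x_k^*,0)\in\mathcal C\times\R^m$. For the quotient part, any $(x,y)\in\gph F\cap(\mathcal C\times\Theta)$ near $(x_k,y_k)$ has $x\in F^{-1}(\Theta)\cap\mathcal C$ near $x_k$, and $\langle(x_k^*,0),(x,y)-(x_k,y_k)\rangle=\langle x_k^*,x-x_k\rangle$ with $\Vert x-x_k\Vert\le\Vert(x,y)-(x_k,y_k)\Vert$ for the product norm of the paper, so the Fréchet quotient in the product space is dominated by the one defining $x_k^*\in\hat N_{\mathcal C}(x_k,F^{-1}(\Theta))$; taking the limsup gives $(x_k^*,0)\in\hat N_{\mathcal C\times\R^m}((x_k,y_k),\gph F\cap(\R^n\times\Theta))$, and passing $k\to\infty$ yields $(x^*,0)\in N_{\mathcal C\times\R^m}\big((\bar x,\bar y),\gph F\cap(\R^n\times\Theta)\big)$.

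Next I would invoke Theorem~\ref{thm-intersec-rule} with $\Omega_1:=\gph F$, $\Omega_2:=\R^n\times\Theta$, $\mathcal C_1:=\mathcal C\times\R^m$ and $\mathcal C_2:=\R^{n+m}$ (both $\Omega_i$ are closed, hence locally closed, and $\mathcal C_2$ is a neighbourhood of $(\bar x,\bar y)$). Hypothesis~(ii) is exactly normal-densedness of $\{\Omega_1,\Omega_2\}$ in $\{\mathcal C_1,\mathcal C_2\}$ at $(\bar x,\bar y)$, and I would deduce the LQC wrt $\{\mathcal C_1,\mathcal C_2\}$ from~(i): if $(u_{1k}^*,v_{1k}^*)\in\hat N_{\mathcal C_1}((x_{1k},y_{1k}),\gph F)$ and $(u_{2k}^*,v_{2k}^*)\in\hat N((x_{2k},y_{2k}),\R^n\times\Theta)=\{0\}\times\hat N(y_{2k},\Theta)$ have limits $(u_i^*,v_i^*)$ with vanishing sum, then $u_1^*=u_2^*=0$ and $v_2^*=-v_1^*$, so $-v_1^*\in N(\bar y,\Theta)$, while $(0,v_1^*)\in N_{\mathcal C\times\R^m}((\bar x,\bar y),\gph F)=N_{\mathcal C\times\R^m}((\bar x,\bar y),\gph F_{\mathcal C})$ (these coincide, by Proposition~\ref{pro1}(i) applied to $\gph F_{\mathcal C}=\gph F\cap(\mathcal C\times\R^m)$) gives $-v_1^*\in\ker D^*_{\mathcal C}F(\bar x,\bar y)$; by~(i) this forces $v_1^*=0$, so all four limits vanish. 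Hence the ``consequently'' case \eqref{thm-inter-rule-eq00} applies and gives
\[
N_{\mathcal C\times\R^m}\big((\bar x,\bar y),\gph F\cap(\R^n\times\Theta)\big)\subset N_{\mathcal C\times\R^m}\big((\bar x,\bar y),\gph F\big)+N\big((\bar x,\bar y),\R^n\times\Theta\big),
\]
with $N((\bar x,\bar y),\R^n\times\Theta)=\{0\}\times N(\bar y,\Theta)$ by the classical product rule. Writing $(x^*,0)=(x^*,-y^*)+(0,y^*)$ with $(x^*,-y^*)\in N_{\mathcal C\times\R^m}((\bar x,\bar y),\gph F)=N_{\mathcal C\times\R^m}((\bar x,\bar y),\gph F_{\mathcal C})$ and $y^*\in N(\bar y,\Theta)$, we obtain $x^*\in D^*_{\mathcal C}F(\bar x,\bar y)(y^*)$ for this $\bar y\in F(\bar x)\cap\Theta$, which is precisely \eqref{composite-rule-eq}.

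The step I expect to be the main obstacle is the lifting together with its passage to the limit: this is where inner semicompactness is genuinely used (to produce the accumulation point $\bar y$ that lets the $\Limsup$ close), and where one must check carefully that dominating the product-space Fréchet quotient by the $\R^n$-one is legitimate for the specific product norm fixed in the paper and does not disturb the convergences $x_k^*\to x^*$, $(x_k,y_k)\to(\bar x,\bar y)$. A secondary technicality is the identification $N_{\mathcal C\times\R^m}((\bar x,\bar y),\gph F)=N_{\mathcal C\times\R^m}((\bar x,\bar y),\gph F_{\mathcal C})$, which is what makes the right-hand side of \eqref{composite-rule-eq} the limiting coderivative with respect to $\mathcal C$ in the sense of Definition~\ref{coderivative-wrt}.
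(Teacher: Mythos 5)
Your proposal is correct and follows essentially the same route as the paper's own proof: lifting $x^*$ to $(x^*,0)\in N_{\mathcal C\times\R^m}\big((\bar x,\bar y),\gph F\cap(\R^n\times\Theta)\big)$ via inner semicompactness, verifying the LQC for $\{\gph F,\R^n\times\Theta\}$ from qualification (i) (read, as you do, as $N(\bar y,\Theta)\cap\ker D^*_{\mathcal C}F(\bar x,\bar y)=\{0\}$), and then applying the intersection rule \eqref{thm-inter-rule-eq00} of Theorem~\ref{thm-intersec-rule} with $\mathcal C_2=\R^{n+m}$. Your explicit check that $N_{\mathcal C\times\R^m}((\bar x,\bar y),\gph F)=N_{\mathcal C\times\R^m}((\bar x,\bar y),\gph F_{\mathcal C})$ is a detail the paper leaves implicit, but it does not change the argument.
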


\begin{proof}
Taking $x^*\in N_{\mathcal C}(\bar x,F^{-1}(\Theta))$, one sees that there exist by the definition sequences $x_k\xrightarrow{F^{-1}(\Theta)\cap\mathcal C}\bar x$ and $x_k^*\in \hat N_{\mathcal C}(x_k,F^{-1}(\Theta))$ such that $x_k^*\to x^*.$ By using the inner compactness at $\bar x$ with respect to $\mathcal C$ of $\tilde F(x):=F(x)\cap\Theta$, one sees that  there exists $y_k\in F(x_k)\cap\Theta$ and its subsequence $\{y_{k_s}\}\to \bar y$ with some $\bar y\in F(\bar x)\cap\Theta.$ Without loss of generality, we can assume that   $(x_k,y_k)$ coincides with its subsequence $(x_{k_s},y_{k_s}).$  Put $\Omega_1:=\gph F$ and $\Omega_2:=\R^n\times\Theta.$ Then $\Omega_1\cap\mathcal C\times\R^m,\Omega_2$ are closed sets and $(x_k,y_k)\in \Omega_1\cap\Omega_2\cap\mathcal C\times\R^m$ for all $k.$ We next  show $(x_k^*,0)\in \hat N_{\mathcal C\times\R}((x_k,y_k),\Omega_1\cap\Omega_2).$ Indeed, for any $(x,y)\in \Omega_1\cap\Omega_2,$ we have $y\in F(x)\cap\Theta$ and $x\in F^{-1}(\Theta)\cap\mathcal C$, which implies that
$$
\limsup_{(x,y)\xrightarrow{\Omega_1\cap\Omega_2}(\bar x,\bar y)}\dfrac{\langle (x_k^*,0), (x,y)-(x_k,y_k)\rangle}{\Vert (x,y)-(x_k,y_k)\Vert} \le  \limsup_{x\xrightarrow{F^{-1}(\Theta)\cap\mathcal C}x_k}\dfrac{\langle x_k^*, x-x_k\rangle}{\Vert x-x_k\Vert}\le 0
$$
due to $x_k^*\in \hat N_{\mathcal C}(x_k,F^{-1}(\Theta))$. This follows that $(x_k^*,0)\in \hat N((x_k,y_k),\Omega_1\cap\Omega_2)$. Since $x^*\in \mathcal R(x_k, \mathcal C)$, $(x_k^*,0)\in \mathcal R((x_k,y_k),\mathcal C\times\R^m).$ Thus we derive
$$
(x^*,0)\in N_{\mathcal C\times\R^m}((\bar x,\bar y),\Omega_1\cap\Omega_2),
$$
which implies from the assumption~(ii) that $\{\Omega_1,\Omega_2\}$ are normal-densed in $\{\mathcal C_1,\mathcal C_2\}$ at $(\bar x,\bar y).$
We next prove the following claim.

{\it Claim.} $\{\Omega_1,\Omega_2\}$ satisfy the LQC wrt $\{\mathcal C_1,\mathcal C_2\}$ at $(\bar x,\bar y)$, where $\mathcal C_1:=\mathcal C\times\R^m$ and $\mathcal C_2:=\R^n\times \R^m.$ Indeed, take arbitrarily sequences $(x_{ik},y_{ik})\xrightarrow{\Omega_i\cap\mathcal C_i}(\bar x,\bar y)$ and $(x_{ik}^*,y_{ik}^*)\in \hat N_{\mathcal C_i}((x_{ik},y_{ik}),\Omega_i)$ with $(x_{ik}^*,y_{ik}^*)\to (x_i^*,y_i^*)$ for $i=1,2$ satisfying $(x_{1k}^*+x_{2k}^*, y_{1k}^*+y_{2k}^*)\to (0,0)$ as $k\to \infty.$ Then
$$
x_1^*\in D_{\mathcal C}^*F(\bar x,\bar y)(-y_1^*) \text{ \rm and } x_{2k}^*=0, \; y_{2k}^*\in \hat N(y_{2k},\Theta)\; \text{ \rm for all } k\in \N.
$$
Two last assertions imply that $x_2^*=0$ and $y_2^*\in N(\bar y,\Theta)$ and then $x_1^*=0, y_1^*=-y_2^*.$ This follows that $0\in D_{\mathcal C}^*F(\bar x,\bar y)(y_2^*)$, which gives us that $y_2^*\in {\rm ker}\,D_{\mathcal C}^*F(\bar x,\bar y)\cap N(\bar y,\Theta).$ By using the assumption~(i), we obtain $y_2^*=0$ and hence we derive that $\{\Omega_1,\Omega_2\}$ satisfies the LQC  wrt $\{\mathcal C_1,\mathcal C_2\}$ at $(\bar x,\bar y)$.

Using \eqref{thm-inter-rule-eq00} in Theorem~\ref{thm-intersec-rule}, we obtain
$$
(x^*,0) \in N_{\mathcal C\times \R^m}((\bar x,\bar y),\Omega_1\cap\Omega_2)\subset N_{\mathcal C\times\R^m}((\bar x,\bar y),\Omega_1)+N((\bar x,\bar y),\Omega_2),
$$
so we find $(x_1^*,y_1^*)\in N_{\mathcal C\times\R^m}((\bar x,\bar y),\Omega_1)$ and $(x_2^*,y_2^*)\in N((\bar x,\bar y),\Omega_2)$ such that
$$
(x^*,0)=(x_1^*,y_1^*)+(x_2^*,y_2^*).
$$
Moreover, it is not difficult to see that $N((\bar x,\bar y),\Omega_2)=\{0\}\times N(\bar y,\Theta).$ Thus $x_2^*=0$ and $y_2^*\in N(\bar y,\Theta).$ This follows that $x^*=x_1^*$ and $-y_1^*\in N(\bar y,\Theta)$, which imply that $x^*\in D^*_{\mathcal C}F(\bar x,\bar y)(y^*)$ with $y^*=-y_2^*\in N(\bar y,\Theta)$. Hence \eqref{composite-rule-eq} holds.
\end{proof}

\section{Formulas for Calculating of the Limiting Coderivative with respect to a Set}

This section is to develop  formulas for sum and chain rules of the limiting coderivatives with respect to a set of set-valued mappings. We  start with sum rules. Let us consider two set-valued mappings $F_1, F_2: \mathbb R^n\rightrightarrows\R^m$. We define a set-valued mapping $S:\R^{n+m}\rightrightarrows\R^{2m}$ and sets $\Omega_i$ for $i=1,2$ as follows, (see \cite{Mor06}):
\begin{equation}\label{s-func}
S(x,y):=\{(y_1,y_2)\mid y_1\in F_1(x), y_2\in F_2(x), y_1+y_2=y\}
\end{equation}
and
\begin{equation}\label{Omega_i}
\Omega_i:=\left\{(x,y_1,y_2)\in \R^n\times\R^m\times\R^m\mid (x,y_i)\in \gph F_i \right\}.
\end{equation}

\begin{theorem}[sum rules]\label{sum-rule}
Let $F_i:\R^n\rightrightarrows\R^m$ with $(\bar x,\bar y)\in \gph(F_1+F_2)$  and $\mathcal C_i\subset \R^n$ be nonempty and closed sets and $\bar x\in\mathcal C_i$ for $i=1,2$. Assume that the following qualifications are fulfilled:

{\rm ($q_1$) } $D^*_{\mathcal C_1}F_1(\bar x,\bar y_1)(0)\cap [-D_{\mathcal C_2}^*F_2(\bar x,\bar y_2)(0)]=\{0\},$

{\rm ($q_2$)} $\{\Omega_1,\Omega_2\}$ are normal-densed in $\{\mathcal C_1\times\R^{2m},\mathcal C_2\times\R^{2m}\}$ at $(\bar x,\bar y_1,\bar y_2),$ where $\Omega_i$ for $i=1,2$ is given by \eqref{Omega_i}.

Then we have the following assertions:

{\rm (i)} If $\gph F_1$ and $\gph F_2$ are locally closed around $(\bar x, \bar y_1)$ and $(\bar x,\bar y_2)$ respectively and $S$ is inner semicontinuous at $(\bar x,\bar y,\bar y_1,\bar y_2)$ with respect to $\mathcal C\times\R^m$ with $\mathcal C:=\mathcal C_1\cap\mathcal C_2,$ then \begin{equation}\label{sum-rule-1}
D^*_{\mathcal C}(F_1+F_2)(\bar x,\bar y)(y^*)\subset D^*_{\mathcal C_1}F_1(\bar x,\bar y_1)(y^*)+D^*_{\mathcal C_2}F_2(\bar x,\bar y_2)(y^*)\; \forall y^*\in \R^m.\end{equation}

{\rm (ii)} If $F_i$ for $i=1,2$ is closed-graph with respect to $\mathcal C$ whenever $x$ is near $\bar x$ and $S$ is inner semicompact at $(\bar x,\bar y)$ with respect to $\mathcal C\times\R^m$, then, for any $y^*\in \R^m$,
\begin{equation}\label{sum-rule-2}
D^*_{\mathcal C}(F_1+F_2)(\bar x,\bar y)(y^*)\subset \bigcup_{(\bar y_1,\bar y_2)\in S(\bar x,\bar y)}\left[D^*_{\mathcal C_1}F_1(\bar x,\bar y_1)(y^*)+D^*_{\mathcal C_2}F_2(\bar x,\bar y_1)(y^*)\right].
\end{equation}
\end{theorem}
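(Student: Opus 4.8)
The plan is to reduce the sum rule to the intersection rule of Theorem~\ref{thm-intersec-rule}, applied to the sets $\Omega_1,\Omega_2$ of \eqref{Omega_i} inside $\R^n\times\R^m\times\R^m$, following the pattern of the classical coderivative sum rule. Write $\mathcal C:=\mathcal C_1\cap\mathcal C_2$, fix $y^*\in\R^m$ and let $x^*\in D^*_{\mathcal C}(F_1+F_2)(\bar x,\bar y)(y^*)$, i.e.\ $(x^*,-y^*)\in N_{\mathcal C\times\R^m}((\bar x,\bar y),\gph(F_1+F_2))$ (by Proposition~\ref{pro1}~(i) one may replace $\gph(F_1+F_2)$ by $\gph(F_1+F_2)_{\mathcal C}$ without changing this cone). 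Throughout, $(\bar y_1,\bar y_2)$ denotes the pair from $S(\bar x,\bar y)$ appearing in the relevant hypothesis: the prescribed one in case (i), and, in case (ii), a limit point supplied by inner semicompactness of $S$ (closed-graph with respect to $\mathcal C$ then guarantees $(\bar y_1,\bar y_2)\in S(\bar x,\bar y)$).

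\textbf{Step 1: lifting to the product space.} I would first show $(x^*,-y^*,-y^*)\in N_{\mathcal C\times\R^{2m}}((\bar x,\bar y_1,\bar y_2),\Omega_1\cap\Omega_2)$. Pick, via Definition~\ref{normal-cone-wrt}, sequences $(x_k,y_k)\xrightarrow{\gph(F_1+F_2)\cap(\mathcal C\times\R^m)}(\bar x,\bar y)$ and $(x_k^*,-y_k^*)\to(x^*,-y^*)$ with $(x_k^*,-y_k^*)\in\hat N_{\mathcal C\times\R^m}((x_k,y_k),\gph(F_1+F_2))$; since $\dom S=\gph(F_1+F_2)$, inner semicontinuity in case (i) (resp.\ inner semicompactness, passing to a subsequence, in case (ii)) gives $(y_{1k},y_{2k})\in S(x_k,y_k)$ with $y_{ik}\to\bar y_i$. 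For every $(x,y_1,y_2)\in\Omega_1\cap\Omega_2\cap(\mathcal C\times\R^{2m})$ one has $(x,y_1+y_2)\in\gph(F_1+F_2)\cap(\mathcal C\times\R^m)$, together with the identity $\langle(x_k^*,-y_k^*,-y_k^*),(x,y_1,y_2)-(x_k,y_{1k},y_{2k})\rangle=\langle(x_k^*,-y_k^*),(x,y_1+y_2)-(x_k,y_{1k}+y_{2k})\rangle$ and, for the sum norm, $\Vert(x,y_1,y_2)-(x_k,y_{1k},y_{2k})\Vert\ge\Vert(x,y_1+y_2)-(x_k,y_{1k}+y_{2k})\Vert$; inserting these into the defining $\epsilon$-inequality of $\hat N_{\mathcal C\times\R^m}((x_k,y_k),\gph(F_1+F_2))$ yields $(x_k^*,-y_k^*,-y_k^*)\in\hat N_{\mathcal C\times\R^{2m}}((x_k,y_{1k},y_{2k}),\Omega_1\cap\Omega_2)$, the realizability requirement being inherited from $x_k+px_k^*\in\mathcal C$. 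Letting $k\to\infty$ gives the claim; local closedness of $\Omega_1\cap\Omega_2$ around $(\bar x,\bar y_1,\bar y_2)$ comes from that of $\gph F_i$ in case (i) and from closed-graph with respect to $\mathcal C$ in case (ii).

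\textbf{Step 2: the intersection rule.} I would then verify the hypotheses of Theorem~\ref{thm-intersec-rule} for $\{\Omega_1,\Omega_2\}$ with $\{\mathcal C_1\times\R^{2m},\mathcal C_2\times\R^{2m}\}$ at $(\bar x,\bar y_1,\bar y_2)$ (so their intersection is $\mathcal C\times\R^{2m}$). The normal-densedness is exactly ($q_2$). For the LQC with respect to $\{\mathcal C_1\times\R^{2m},\mathcal C_2\times\R^{2m}\}$: by the product formula of Theorem~\ref{product-rule2}~(i) (after reordering coordinates), every element of $\hat N_{\mathcal C_1\times\R^{2m}}(\cdot,\Omega_1)$ is null in its $y_2$-block and every element of $\hat N_{\mathcal C_2\times\R^{2m}}(\cdot,\Omega_2)$ is null in its $y_1$-block; hence, for sequences as in Definition~\ref{slqc}~(i), the condition that the sum tend to $0$ forces both $\R^m\times\R^m$-parts of the limits to vanish, while the $\R^n$-parts $x_1^*,x_2^*$ satisfy $x_1^*=-x_2^*$ with $x_1^*\in D^*_{\mathcal C_1}F_1(\bar x,\bar y_1)(0)$ and $x_2^*\in D^*_{\mathcal C_2}F_2(\bar x,\bar y_2)(0)$; by ($q_1$), $x_1^*=x_2^*=0$, so all limits vanish. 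Theorem~\ref{thm-intersec-rule} now gives $N_{\mathcal C\times\R^{2m}}((\bar x,\bar y_1,\bar y_2),\Omega_1\cap\Omega_2)\subset N_{\mathcal C_1\times\R^{2m}}((\bar x,\bar y_1,\bar y_2),\Omega_1)+N_{\mathcal C_2\times\R^{2m}}((\bar x,\bar y_1,\bar y_2),\Omega_2)$.

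\textbf{Step 3: descending, and the main obstacle.} Applying Theorem~\ref{product-rule2}~(ii) once more, $N_{\mathcal C_1\times\R^{2m}}((\bar x,\bar y_1,\bar y_2),\Omega_1)=\{(a,b,0):(a,b)\in N_{\mathcal C_1\times\R^m}((\bar x,\bar y_1),\gph F_1)\}$ and symmetrically $N_{\mathcal C_2\times\R^{2m}}((\bar x,\bar y_1,\bar y_2),\Omega_2)=\{(a,0,b):(a,b)\in N_{\mathcal C_2\times\R^m}((\bar x,\bar y_2),\gph F_2)\}$. Decomposing $(x^*,-y^*,-y^*)$ from Step 1 via the inclusion of Step 2 then produces $x_1^*,x_2^*$ with $(x_i^*,-y^*)\in N_{\mathcal C_i\times\R^m}((\bar x,\bar y_i),\gph F_i)$, i.e.\ $x_i^*\in D^*_{\mathcal C_i}F_i(\bar x,\bar y_i)(y^*)$, and $x^*=x_1^*+x_2^*$; this is \eqref{sum-rule-1} in case (i), and taking the union over $(\bar y_1,\bar y_2)\in S(\bar x,\bar y)$ yields \eqref{sum-rule-2} in case (ii). I expect Step 1 to be the main obstacle: one has to manage the Fr\'echet-normal-cone estimate and the norm comparison between the product space and the ``summed'' space with care (this is where the sum norm on products and the norm-independence recorded after Definition~\ref{normal-cone-wrt} help), and carry the realizability conditions ``$\bar x+px^*\in\mathcal C$'' through the lifting; Steps 2 and 3 are essentially bookkeeping on top of the product and intersection rules already in hand.
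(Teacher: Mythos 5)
Your proposal is correct and follows essentially the same route as the paper's proof: lift $(x^*,-y^*)$ to $(x^*,-y^*,-y^*)\in N_{\mathcal C\times\R^{2m}}((\bar x,\bar y_1,\bar y_2),\Omega_1\cap\Omega_2)$ using the inner semicontinuity/semicompactness of $S$, verify the LQC for $\{\Omega_1,\Omega_2\}$ via the product rule of Theorem~\ref{product-rule2} together with ($q_1$), invoke the intersection rule of Theorem~\ref{thm-intersec-rule} under ($q_2$), and descend with Theorem~\ref{product-rule2}~(ii). Your Step~1 is in fact slightly more careful than the paper's (which writes the two difference quotients as equal, whereas only the numerators coincide and the product-space denominator dominates under the sum norm), but this is a refinement of the same argument, not a different approach.
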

\begin{proof}
We first prove (i). It is clearly that $\Omega_1$ and $\Omega_2$ are locally closed around $(\bar x,\bar y_1,\bar y_2)$ due to the local closedness of $\gph F_1$ and $\gph F_2$ around $(\bar x,\bar y_1)$ and $(\bar x,\bar y_2)$ respectively. Taking $(y^*,x^*)\in D^*_{\mathcal C}(F_1+F_2)(\bar x,\bar y)$, we find by the definition sequences $(x_k,y_k)\xrightarrow{\tgph(F_1+F_2)\cap \mathcal C}(\bar x,\bar y)$ and $(x_k^*, y_k^*)\to (x^*,y^*)$ such that $$(x_k^*,-y_k^*)\in \hat N_{\mathcal C\times\R^m}((x_k,y_k),\gph(F_1+F_2))$$ for all $k\in \N.$ There exist, by the inner semicontinuity with respect to $\mathcal C$ at $(\bar x,\bar y,\bar y_1,\bar y_2)$ of $S$, a sequence $(y_{k1},y_{2k})\to (\bar y_1,\bar y_2)$ such that $(y_{1k},y_{2k})\in S(x_k,y_k)$, which gives us that $(x_k,y_{1k},y_{2k})\in \Omega_1\cap\Omega_2$ for all $k.$ We next prove that
\begin{equation}\label{sum-rule-eq0}
(x_k^*,-y_k^*,-y_k^*)\in \hat N_{\mathcal C\times\R^m\times\R^m}((x_k,y_{1k},y_{2k}),\Omega_1\cap\Omega_2)\; \forall k\in \N.
\end{equation}
Indeed, for any $(x,y_1,y_2)\in \Omega_1\cap\Omega_2\cap \mathcal C,$ we have
$$
\dfrac{\langle(x_k^*,-y_k^*,-y_k^*),(x,y_1,y_2)-(x_k,y_{1k},y_{2k})\rangle}{\Vert (x,y_1,y_2)-(x_k,y_{1k},y_{2k})\Vert}=\dfrac{\langle(x_k^*,-y_k^*),(x,y)-(x_k,y_{k})\rangle}{\Vert (x,y)-(x_k,y_{k})\Vert},
$$
where $y:=y_1+y_2\in (F_1+F_2)(x)$ and $y_k:=y_{1k}+y_{2k}\in (F_1+F_2)(x_k).$ It implies that
\begin{align}
&\limsup_{(x,y_1,y_2)\xrightarrow{\Omega_1\cap\Omega_2\cap\mathcal C\times\R^{2m}}(x_k,y_{1k},y_{2k}} \dfrac{\langle(x_k^*,-y_k^*,-y_k^*),(x,y_1,y_2)-(x_k,y_{1k},y_{2k})\rangle}{\Vert (x,y_1,y_2)-(x_k,y_{1k},y_{2k})\Vert}\nonumber\\
&=\limsup_{(x,y)\xrightarrow{\tgph(F_1+F_2)\cap\mathcal C\times\R^m}(x_k,y_{k})}\dfrac{\langle(x_k^*,-y_k^*),(x,y)-(x_k,y_{k})\rangle}{\Vert (x,y)-(x_k,y_{k})\Vert}\nonumber\\
&\le 0,\label{sum-rule-eq1}
\end{align}
where \eqref{sum-rule-eq1} holds due to $(x_k^*,-y_k^*)\in \hat N_{\mathcal C\times\R^m}((x_k,y_k),\gph(F_1+F_2)).$ Passing to the limits in \eqref{sum-rule-eq0}, we have
\begin{equation}\label{sum-rule-eq3}
(x^*,-y^*,-y^*)\in N_{\mathcal C\times\R^{2m}}((\bar x,\bar y_1,\bar y_2),\Omega_1\cap\Omega_2).
\end{equation}
We now show the assertion that $\{\Omega_1,\Omega_2\}$ satisfy the LQC wrt $\{\tilde{\mathcal C_1},\tilde{\mathcal C_2}\}$ at $(\bar x,\bar y_1,\bar y_2),$ where $\tilde{\mathcal C_i}:=\mathcal C_i\times\R^{2m}$ for $i=1,2.$ To see this, we take sequences $(x_{ik},u_{ik},v_{ik})\xrightarrow{\Omega_i\cap\tilde{\mathcal C_i}}(\bar x,\bar y_1,\bar y_2)$ and $(x_{ik}^*,u_{ik}^*,v_{ik}^*)\in \hat N_{\tilde{\mathcal C_i}}((x_{ik},u_{ik},v_{ik}),\Omega_i)$ with $(x_{ik}^*,u_{ik}^*,v_{ik}^*)\to (x_i^*,u_i^*,v_i^*)$ for $i=1,2$ such that
\begin{equation}\label{sum-rule-eq2}
(x_{1k}^*,u_{1k}^*,v_{1k}^*)+(x_{2k}^*,u_{2k}^*,v_{2k}^*)\to 0.
\end{equation}
It implies from \eqref{Omega_i} and the definition of $\tilde{\mathcal C_i}$ that
$$
\Omega_i\cap\tilde{\mathcal C_i}=\{(x,y_1,y_2)\in\R^{n+2m}\mid x\in\mathcal C_i, y_i\in \gph F_i\},
$$
for $i=1,2.$
Therefore, from Theorem~\ref{product-rule2}~(i), we obtain
$$
\hat N_{\tilde{\mathcal C_1}}((x_{1k},u_{1k},v_{1k}),\Omega_1)=\left\{(x_{1k}^*,u_{1k}^*,v_{1k}^*)\mid (x_{1k}^*,u_{1k}^*)\in \hat N_{\mathcal C_1\times\R^m}((x_{1k},u_{1k}),\gph F_1), v_{1k}^*=0\right\}
$$
and
$$
\hat N_{\tilde{\mathcal C_2}}((x_{2k},u_{2k},v_{2k}),\Omega_2)=\left\{(x_{2k}^*,u_{2k}^*,v_{2k}^*)\mid (x_{2k}^*,v_{2k}^*)\in \hat N_{\mathcal C_2\times\R^m}((x_{2k},v_{2k}),\gph F_2), u_{2k}^*=0\right\},
$$
which give us that
$$
v_1^*=u_2^*=0, \; (x_1^*,u_1^*)\in N_{\mathcal C_1\times\R^m}((\bar x,\bar y_1),\gph F_1)\; \text{ \rm and } (x_2^*,v_2^*)\in N_{\mathcal C_2\times\R^m}((\bar x,\bar y_2),\gph F_2).
$$
Taking \eqref{sum-rule-eq2} into account, we have $x_1^*+x_2^*=0, u_1^*=v_2^*=0$ and $x_i^*\in D_{\mathcal C_i}^*F_i(\bar x,\bar y_i)(0)$ for $i=1,2$, so $x_1^*\in D_{\mathcal C_1}^*F_1(\bar x,\bar y_1)(0)\cap\left[-D_{\mathcal C_2}^*F_2(\bar x,\bar y_2)(0)\right]=\{0\}$ due to the assumption~($q_1$). Thus, we obtain $x_1^*=x_2^*=0$ and hence the assertion is proven.

Combining this assertion, Assumption~($q_2$), and Theorem~\ref{thm-intersec-rule}, we find by \eqref{sum-rule-eq3} elements $(x_i^*,-u_i^*,-v_i^*)\in N_{\mathcal C_i\times\R^{2m}}((\bar x,\bar y_1,\bar y_2),\Omega_i)$ for $i=1,2$ such that
$$
(x_1^*,-u_1^*,-v_1^*)+(x_2^*,-u_2^*,-v_2^*)=(x^*,-y^*,-y^*).
$$
From Theorem~\ref{product-rule2}~(ii), one has
$$
N_{\mathcal C_1\times\R^{2m}}((\bar x,\bar y_1,\bar y_2),\Omega_1)=N_{\mathcal C_1\times\R^m}((\bar x,\bar y_1),\gph F_1)\times\{0\},
$$
$$
N_{\mathcal C_2\times\R^{2m}}((\bar x,\bar y_1,\bar y_2),\Omega_2)=\{(x^*,0,v^*)\mid (x^*,v^*)\in N_{\mathcal C_2\times\R^m}((\bar x,\bar y_2),\gph F_2)\}
$$
due to the definition of $\Omega_1$ and $\Omega_2$, which  imply that $v_1^*=u_2^*=0$ and then $(x_1^*+x_2^*,-u_1^*,-v_2^*)=(x^*,-y^*,-y^*).$ This gives us that $x_1^*+x_2^*=x^*$ and $u_1^*=v_2^*=y^*.$ Moreover, we also have
$$
x_1^*\in D^*_{\mathcal C_1}F_1(\bar x,\bar y_1)(u_1^*) \; \text{ \rm and } x_2^*\in D^*_{\mathcal C_2}F_2(\bar x,\bar y_2)(v_2^*),
$$
which mean that
$$
x^*\in D^*_{\mathcal C_1}F_1(\bar x,\bar y_1)(y^*)+D^*_{\mathcal C_2}F_2(\bar x,\bar y_2)(y^*).
$$
So we obtain the relation~\eqref{sum-rule-1}.

We next justify (ii). Taking $(y^*,x^*)\in D^*_{\mathcal C}(F_1+F_2)(\bar x,\bar y),$ there exist by the definition sequences $(x_k,y_k)\xrightarrow{\tgph (F_1+F_2)\cap\mathcal C\times\R^m}(\bar x,\bar y)$ and $(x_k^*,y_k^*)\in\hat N_{\mathcal C\times\R^m}((x_k,y_k),\gph(F_1+F_2))$ such that $(x_k^*,y_k^*)\to (x^*,y^*).$ From the inner semicompactness with respect to $\mathcal C\times\R^m$ at $(\bar x,\bar y)$ of $S$, we find a sequence $(y_{1k},y_{2k})\in S(x_k,y_k)$ and a subsequence $(y_{1k_n},y_{2k_n})$ of $(y_{1k},y_{2k})$ satisfying $(y_{1k_n},y_{2k_n})\to (\bar y_1,\bar y_2)$ for some $\bar y_1,\bar y_2\in \R^m.$ It implies that $y_{ik_n}\in F_i(x_{k_n})$ for $i=1,2$ and $y_{1k_n}+y_{2k_n}=y_{k_n}$, and so $(x_{k_n},y_{ik_n})\in \gph F_i$ for all $k_n\in \N.$ Since $F_i$, for $i=1,2$, is closed-graph with respect to $\mathcal C$ at $x$ for any $x$ close enough $\bar x,$ we have $\bar y_i\in F_i(\bar x)$ for $i=1,2$. Moreover, we also have $\bar y_1+\bar y_2=\bar y.$ By using the similar arguments as in (i) with $(x_{ik_n}, y_{ik_n})$ instead of $(x_{ik}, y_{ik})$, $i=1,2,$ we obtain
$$
x^*\in D^*_{\mathcal C_1}F_1(\bar x,\bar y_1)(y^*)+D^*_{\mathcal C_2}F_2(\bar x,\bar y_2)(y^*).
$$
Thus the relation~\eqref{sum-rule-2} holds.
Hence, the proof of theorem is completed.
\end{proof}

\begin{remark}\label{rem4}
From Theorem~\ref{thm1}, we can see that the qualification condition $(q_1)$ in Theorem~\ref{sum-rule} can be replaced by the following condition: $F_1$ satisfies Aubin property with respect to $\mathcal C_1$ at $(\bar x,\bar y_1)$ or $F_2$ satisfies Aubin property with respect to $\mathcal C_2$ at $(\bar x,\bar y_2).$ Note that, the qualification condition that $F_1$ (or $F_2$) satisfies the Aubin property with respect to $\mathcal C$ at $(\bar x,\bar y_1)$ (resp. $(\bar x,\bar y_2)$) is easier to achieve than that $F_1$ (or $F_2$) satisfies the Aubin property at $(\bar x,\bar y_1)$ (resp. $(\bar x,\bar y_2)$).
\end{remark}

We next state chain rules for the limiting coderivatives with respect to a set of set-valued mappings. Given $G:\R^n\rightrightarrows \R^m$, $F:\R^m\rightrightarrows\R^s$ and $\mathcal C\subset \R^n,$
we define a set-valued mapping $\mathcal S:\R^n\times\R^s\rightrightarrows\R^m$ by $\mathcal S(x,z):=G(x)\cap F^{-1}(z)$ for every $(x,z)\in \R^n\times\R^s$ and sets $\Theta_1:=\gph G\times\R^s$, $\Theta_2:=\R^n\times\gph F.$

\begin{theorem}[chain rules]\label{chain-rule}
Let $G:\R^n\rightrightarrows \R^m$, $F:\R^m\rightrightarrows\R^s$, and $\bar z\in (F\circ G)(\bar x).$ Let $\bar y\in \mathcal S(\bar x,\bar z)$. Assume that the following qualification conditions are fulfilled:

{\rm ($q_1$)} $D^*F(\bar y,\bar z)(0)\cap \left[\ker D_{{\mathcal C}}^*G(\bar y,\bar x)\right]=\{0\}$.

{\rm ($q_2$)} $\{\Theta_1,\Theta_2\}$ are normal-densed in $\{\mathcal C\times\R^{m+s},\R^{n+m+s}\}$ at $(\bar x,\bar y,\bar z).$

Then the following assertions hold:

{\rm (i)} Let $\mathcal S$ be inner semicontinuous with respect to $\mathcal C\times\R^{m+s}$ at $(\bar x,\bar z,\bar y),$ and let $\gph G$ and $\gph F$ be locally closed around $(\bar x,\bar y)$ and $(\bar y,\bar z)$ respectively. Then
\begin{equation}\label{chain-rule-eqi}
D^*_{\mathcal C}(F\circ G)(\bar x,\bar z)(z^*)\subset D^*_{\mathcal C}G(\bar x,\bar y)\circ D^*F(\bar y,\bar z)(z^*)\; \forall z^*\in \R^s.\end{equation}

{\rm (ii)} Let $\mathcal S$ be inner semicompact with respect to $\mathcal C\times\R^{m+s}$ at $(\bar x,\bar z)$ and let $G$ and $F^{-1}$ be closed-graph with respect to $\mathcal C\times\R^{m+s}$ whenever $x$ is near $\bar x$ and $z$ is near $\bar z$ respectively. Then
$$
D^*_{\mathcal C}(F\circ G)(\bar x,\bar z)(z^*)\subset \bigcup_{\bar y\in \mathcal S(\bar x,\bar z)}\left[D^*_{\mathcal C}G(\bar x,\bar y)\circ D^*F(\bar y,\bar z)(z^*)\right]\; \forall z^*\in \R^s.
$$
\end{theorem}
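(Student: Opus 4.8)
The plan is to reduce both inclusions to the intersection rule for limiting normal cones with respect to a set (Theorem~\ref{thm-intersec-rule}), applied to the two sets $\Theta_1=\gph G\times\R^s$ and $\Theta_2=\R^n\times\gph F$ in $\R^{n+m+s}$ with reference sets $\mathcal C_1:=\mathcal C\times\R^{m+s}$ and $\mathcal C_2:=\R^{n+m+s}$; since $\mathcal C_2$ is a neighbourhood of the reference point, the sharper consequence \eqref{thm-inter-rule-eq00} is available. The bridge between the composition and this intersection is the elementary equivalence ``$z\in(F\circ G)(x)$ iff $(x,y,z)\in\Theta_1\cap\Theta_2$ for some $y$''. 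Concretely, starting from $x^*\in D^*_{\mathcal C}(F\circ G)(\bar x,\bar z)(z^*)$ I would prove that the lifted functional $(x^*,0,-z^*)$ lies in $N_{\mathcal C_1}((\bar x,\bar y,\bar z),\Theta_1\cap\Theta_2)$, split it by Theorem~\ref{thm-intersec-rule}, and finally recognise the two pieces via the product rule Theorem~\ref{thm51} as the graphs of $D^*_{\mathcal C}G(\bar x,\bar y)$ and of $D^*F(\bar y,\bar z)$.

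First I would unwind the hypothesis: $x^*\in D^*_{\mathcal C}(F\circ G)(\bar x,\bar z)(z^*)$ supplies sequences $(x_k,z_k)\to(\bar x,\bar z)$ with $x_k\in\mathcal C$ and $z_k\in(F\circ G)(x_k)$, and $(x_k^*,z_k^*)\to(x^*,z^*)$ with $(x_k^*,-z_k^*)\in\hat N_{\mathcal C\times\R^s}((x_k,z_k),\gph(F\circ G))$. In case (i) the inner semicontinuity of $\mathcal S$ produces $y_k\in\mathcal S(x_k,z_k)$ with $y_k\to\bar y$; in case (ii) the inner semicompactness produces $y_k\in\mathcal S(x_k,z_k)$ with a convergent subsequence, whose limit $\bar y$ lies in $G(\bar x)\cap F^{-1}(\bar z)=\mathcal S(\bar x,\bar z)$ because $G$ and $F^{-1}$ are closed-graph with respect to the relevant set, and one passes to that subsequence. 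Either way $(x_k,y_k,z_k)\in\Theta_1\cap\Theta_2$. Using the identity $\langle(x_k^*,0,-z_k^*),(x,y,z)-(x_k,y_k,z_k)\rangle=\langle(x_k^*,-z_k^*),(x,z)-(x_k,z_k)\rangle$ together with $\Vert(x,y,z)-(x_k,y_k,z_k)\Vert\ge\Vert(x,z)-(x_k,z_k)\Vert$, the Fr\'echet-normal estimate at $(x_k,z_k)$ transfers verbatim to $(x_k,y_k,z_k)$; combined with the realizability $p_k>0$ coming from $x_k+p_kx_k^*\in\mathcal C$ (whence $(x_k,y_k,z_k)+p_k(x_k^*,0,-z_k^*)\in\mathcal C_1$), this gives $(x_k^*,0,-z_k^*)\in\hat N_{\mathcal C_1}((x_k,y_k,z_k),\Theta_1\cap\Theta_2)$, and in the limit $(x^*,0,-z^*)\in N_{\mathcal C_1}((\bar x,\bar y,\bar z),\Theta_1\cap\Theta_2)$.

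Next I would check the two hypotheses of Theorem~\ref{thm-intersec-rule} for $\{\Theta_1,\Theta_2\}$ and $\{\mathcal C_1,\mathcal C_2\}$ at $(\bar x,\bar y,\bar z)$. Normal-densedness is precisely assumption $(q_2)$. For the limiting qualification condition, take $(x_{ik},y_{ik},z_{ik})\to(\bar x,\bar y,\bar z)$ in $\Theta_i$ and $(x_{ik}^*,y_{ik}^*,z_{ik}^*)\in\hat N_{\mathcal C_i}(\cdot,\Theta_i)$ converging to $(x_i^*,y_i^*,z_i^*)$ with the sum tending to $0$. By Theorem~\ref{thm51} (using $\hat N(\cdot,\R^s)=\{0\}$ and $\mathcal R(\cdot,\R^n)=\R^n$) one has $z_{1k}^*=0$ and $(x_{1k}^*,y_{1k}^*)\in\hat N_{\mathcal C\times\R^m}(\cdot,\gph G)$, while $x_{2k}^*=0$ and $(y_{2k}^*,z_{2k}^*)\in\hat N(\cdot,\gph F)$; here $\hat N_{\mathcal C\times\R^m}(\cdot,\gph G)=\hat N_{\mathcal C\times\R^m}(\cdot,\gph G_{\mathcal C})$ by Proposition~\ref{pro1}(i), so these normals genuinely feed $D^*_{\mathcal C}G$. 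Passing to the limit, the sum condition gives $x_1^*=0$, $z_2^*=0$, $y_1^*=-y_2^*$, together with $0=x_1^*\in D^*_{\mathcal C}G(\bar x,\bar y)(y_2^*)$ and $y_2^*\in D^*F(\bar y,\bar z)(0)$; hence $y_2^*\in D^*F(\bar y,\bar z)(0)\cap\ker D^*_{\mathcal C}G(\bar x,\bar y)=\{0\}$ by $(q_1)$, and then all the limit vectors vanish, so the LQC holds.

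Finally, Theorem~\ref{thm-intersec-rule} in the form \eqref{thm-inter-rule-eq00} yields $(x^*,0,-z^*)\in N_{\mathcal C_1}((\bar x,\bar y,\bar z),\Theta_1)+N((\bar x,\bar y,\bar z),\Theta_2)$, and Theorem~\ref{thm51} identifies $N_{\mathcal C_1}((\bar x,\bar y,\bar z),\Theta_1)=N_{\mathcal C\times\R^m}((\bar x,\bar y),\gph G)\times\{0\}$ and $N((\bar x,\bar y,\bar z),\Theta_2)=\{0\}\times N((\bar y,\bar z),\gph F)$. Matching the three coordinate blocks and calling the common middle vector $y^*$, I read off $(x^*,-y^*)\in N_{\mathcal C\times\R^m}((\bar x,\bar y),\gph G)$ and $(y^*,-z^*)\in N((\bar y,\bar z),\gph F)$, i.e.\ $y^*\in D^*F(\bar y,\bar z)(z^*)$ and $x^*\in D^*_{\mathcal C}G(\bar x,\bar y)(y^*)$; this is exactly \eqref{chain-rule-eqi} in case (i), while in case (ii), since $\bar y$ arose as a cluster point in $\mathcal S(\bar x,\bar z)$, taking the union over all such $\bar y$ gives the stated inclusion. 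I expect the main obstacle to be the first step — verifying carefully that the lifted functional $(x_k^*,0,-z_k^*)$ really is a Fr\'echet normal \emph{with respect to} $\mathcal C_1$ to the intersection, i.e.\ transporting both the limsup estimate and the realizability condition up to $\R^{n+m+s}$ — together with, in the qualification step, aligning the argument slots of $D^*_{\mathcal C}G$ and of $\ker$ so that $(q_1)$ applies as written.
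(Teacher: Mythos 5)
Your proposal is correct and follows essentially the same route as the paper's proof: lift $(x^*,0,-z^*)$ to a limiting normal with respect to $\mathcal C\times\R^{m+s}$ of $\Theta_1\cap\Theta_2$, verify the LQC from ($q_1$) via the product structure of the Fr\'echet normal cones, invoke normal-densedness ($q_2$) and Theorem~\ref{thm-intersec-rule}, and then decompose the pieces to read off the coderivative chain. The only cosmetic differences are that you cite Theorem~\ref{thm51} where the paper uses Theorem~\ref{product-rule2} (both apply, since the blocks are not interleaved here) and that you spell out case (ii), which the paper dismisses as similar.
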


\begin{proof}
It is sufficient to prove (i) because the proof of (ii) is similar. To demonstrate (i), we take $x^*\in D^*_{\mathcal C}(F\circ G)(\bar x,\bar z)(z^*).$ We find sequences $(x_k,z_k)\xrightarrow{\tgph(F\circ G)\cap\mathcal C\times\R^s}(\bar x,\bar z)$, $(x_k^*,-z_k^*)\in \hat N_{\mathcal C\times\R^s}((x_k,z_k),\gph(F\circ G))$ with $(x_k^*,z_k^*)\to (x^*,z^*).$ There exists, by the inner semicontinuous with respect to $\mathcal C\times \R^{m+s}$ at $(\bar x,\bar z,\bar y)$ of $\mathcal S,$ a sequence $y_k\to \bar y$ with $y_k\in S(x_k,z_k)$ for all $k\in \N$, which follows that $y_k\in G(x_k)$ and $z_k\in F(y_k)$. We now justify that
$$
(x_k^*,0,-z_k^*)\in \hat N_{\mathcal C\times\R^{m+s}}((x_k,y_k,z_k),\Theta_1\cap\Theta_2),\;\forall k\in\N.
$$
Indeed, for any $(x,y,z)\in \Theta_1\cap\Theta_2,$ we obtain $(x,y)\in \gph G$ and $(y,z)\in \gph F$, which imply that $z\in (F\circ G)(x).$ Moreover, we also have
\begin{align*}
\dfrac{\langle (x_k^*,0,-z_k^*), (x,y,z)-(x_k,y_k,z_k)\rangle}{\Vert(x,y,z)-(x_k,y_k,z_k)\Vert}\le \dfrac{(x_k^*,-z_k^*), (x,z)-(x_k,z_k)\rangle}{\Vert(x,z)-(x_k,z_k)\Vert}.
\end{align*}
Therefore,
\begin{align}
&\limsup_{(x,y,z)\xrightarrow{\Theta_1\cap\Theta_2\cap\mathcal C\times\R^{m+s}}(x_k,y_k,z_k)}\dfrac{\langle (x_k^*,0,-z_k^*), (x,y,z)-(x_k,y_k,z_k)\rangle}{\Vert(x,y,z)-(x_k,y_k,z_k)\Vert}\nonumber\\
&\le \limsup_{(x,z)\xrightarrow{\tgph(F\circ G)\cap\mathcal C\times\R^s}(x_k,z_k)}\dfrac{\langle(x_k^*,-z_k^*), (x,z)-(x_k,z_k)\rangle}{\Vert(x,z)-(x_k,z_k)\Vert}\nonumber\\
&      \le 0, \label{chain-rule-eq1}
\end{align}
where \eqref{chain-rule-eq1} holds due to $(x_k^*,-z_k^*)\in \hat N_{\mathcal C\times\R^s}((x_k,z_k),\gph (F\circ G)).$ This derives that $(x_k^*,0,-z_k^*)\in \hat N_{\mathcal C\times\R^{m+s}}((x_k,y_k,z_k),\Theta_1\cap\Theta_2).$ At the same time, it is clearly that
$$
(x_k^*,0,-z_k^*)\in \mathcal R((x_k,y_k,z_k), \mathcal C\times\R^{m+s}) \text{ \rm whenever } (x_k^*,-z_k^*)\in\mathcal R((x_k, z_k), \mathcal C\times \R^s).
$$
Thus we obtain
$$
(x_k^*,0,-z_k^*)\in \hat N_{\mathcal C\times\R^{m+s}}((x_k,y_k,z_k),\Theta_1\cap\Theta_2),
$$
which gives us that
\begin{equation}\label{chain-rule-eq3}(x^*,0,-z^*)\in N_{\mathcal C\times\R^{m+s}}((\bar x,\bar y,\bar z),\Theta_1\cap\Theta_2).
\end{equation}
For simply, we put $\tilde{\mathcal C_1}:=\mathcal C\times\R^{m+s}$ and $\tilde{\mathcal C_2}:=\R^{n+m+s}.$ We next prove that $\{\Theta_1,\Theta_2\}$ satisfy the LQC wrt $\{\tilde{\mathcal C_1},\tilde{\mathcal C_2}\}$ at $(\bar x,\bar y,\bar z).$ To do this, we take sequences $(x_{ik},y_{ik},z_{ik})\xrightarrow{\Theta_i\cap\tilde{\mathcal C_i}}(\bar x,\bar y,\bar z)$ and $(x_{ik}^*,y_{ik}^*,z_{ik}^*)\in \hat N_{\tilde{\mathcal C_i}}((x_{ik},y_{ik}, z_{ik}),\Theta_i)$ with $(x_{ik}^*,y_{ik}^*,z_{ik}^*)\to (x_{i}^*,y_{i}^*,z_{i}^*)$ for $i=1,2$ such that \begin{equation}\label{chain-rule-eq2}
(x_{1k}^*,y_{1k}^*,z_{1k}^*)+(x_{2k}^*,y_{2k}^*,z_{2k}^*)\to 0.
\end{equation}
From the definition of $\Theta_i$ and $\mathcal C_i$ for $i=1,2$, by Theorem~\ref{product-rule2}~(i), we have
$$
\hat N_{\tilde{\mathcal C_1}}((x_{1k},y_{1k},z_{1k}),\Theta_1)=\hat N_{\mathcal C\times\R^m}((x_{1k},y_{1k}),\gph G)\times\{0\}
$$
and
$$
\hat N_{\tilde{\mathcal C_2}}((x_{2k},y_{2k},z_{2k}),\Theta_2)=\hat N((x_{2k},y_{2k},z_{2k}),\Theta_2)=\{0\}\times\hat N((x_{2k},y_{2k}),\gph F).
$$
Taking \eqref{chain-rule-eq2} into account, we have $x_{1k}^*\to 0$, $z_{2k}^*\to 0$ and $y_{1k}^*+y_{2k}^*\to 0$, which imply that $x_1^*=x_2^*=0,$ $z_1^*=z_2^*=0$, $y_1^*=-y_2^*$, and $(x_1^*,y_1^*)\in N_{\mathcal C\times\R^m}((\bar x,\bar y),\gph G)$, $(y_2^*,z_2^*)\in N((\bar y,\bar z),\gph F).$ These follow that $y_2^*\in D^*F(\bar y,\bar z)(0)$ and $0\in D_{\mathcal C}^*G(\bar x,\bar y)(-y_1^*).$ The last relation is equivalent to $-y_1^*\in \ker D_{\mathcal C}^*G(\bar y,\bar x)$, so we obtain
$$
y_2^*\in D^*F(\bar y,\bar z)(0)\cap\left[\ker D_{\mathcal C}^*G(\bar y,\bar x)\right]
$$
and hence $y_2^*=y_1^*=0$ due to the assumption~($q_1$) and the fact that $y_1^*=-y_2^*$. This follows that $(x_i^*,y_i^*,z_i^*) =0$ for $i=1,2$, which means that $\{\Theta_1,\Theta_2\}$ satisfy the LQC wrt $\{\tilde{\mathcal C_1},\tilde{\mathcal C_2}\}$ at $(\bar x,\bar y,\bar z).$ Combining this and the assumption~($q_2$) and Theorem~\ref{thm-intersec-rule}, for $i=1,2,$ we find  $(x_i^*,y_i^*,-z_i^*)\in N_{\tilde{\mathcal C_i}}((\bar x,\bar y,\bar z),\Theta_i)$ such that
$$
(x_1^*,-y_1^*,-z_1^*)+(x_2^*,-y_2^*,-z_2^*)=(x^*,0,-z^*).
$$
By using Theorem~\ref{thm51}, we obtain $(x_1^*,-y_1^*)\in N_{\mathcal C\times\R^m}((\bar x,\bar y),\gph G)$, $z_1^*=0,$ $x_2^*=0$, $(y_2^*,z_2^*)\in N((\bar y,\bar z),\gph F)$, so $x_1^*=x^*\in D^*_{\mathcal C}G(\bar x,\bar y)(y_1^*),$ $y_1^*=-y_2^*\in D^*F(\bar y,\bar z)(z_2^*)$ and $z_2^*=z^*.$ Therefore,
$$
x^*\in D^*_{\mathcal C}G(\bar x,\bar y)\circ D^*F(\bar y,\bar z)(z^*),
$$
which gives us the relation \eqref{chain-rule-eqi}.
\end{proof}

\begin{remark}
We can see that the qualification condition $(q_1)$ in Theorem~\ref{chain-rule} can be replaced by the following condition that $F$ satisfies Aubin property at $(\bar y,\bar z)$.
\end{remark}
%%=============================================
\section{Applications to Optimality Conditions}\label{sec-app}
We establish now necessary optimality conditions due to generlaized differentials with respect to a set for the \ref{mpecs} problem as follows.

\begin{theorem}\label{nec-con} Consider the \ref{mpecs} problem. Let $\bar x$ be a local minimizer to \eqref{mpecs}, and let $f_{\mathcal C_1}\in \mathcal F(\bar x)$. Assume that the following qualification conditions are fulfilled:

{\rm($q_1$)} $\partial_{\mathcal C_1}^{\infty}f(\bar x)\cap [-D_{\mathcal C_2}^*G(\bar x,0)(0)]=\{0\}$.

{\rm($q_2$)} $\{\Omega_1,\Omega_2\}$ are normal-densed in $\{\mathcal C_1\times\R^{m+1},\mathcal C_2\times\R^{m+1}\}$ at $(\bar x,0,f(\bar x)),$ where $\Omega_i$ for $i=1,2$ is, respectively, defined by
\begin{equation}\label{omegai}
\Omega_1:=\{(x,y,z)\mid (x,z)\in \epi f, y\in \R^m\} \; \text{ \rm and } \Omega_2:=\gph G\times\R.\end{equation}

Then
\begin{equation}\label{thm14-eq0}
0\in \partial f_{\mathcal C_1}(\bar x)+D^*_{\mathcal C_2}G(\bar x,0)(0).
\end{equation}
\end{theorem}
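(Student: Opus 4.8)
The plan is to lift the problem to $\R^n\times\R^m\times\R$, to read the local optimality of $\bar x$ off as a Fr\'echet/limiting normal cone (with respect to a set) at $\bar w:=(\bar x,0,f(\bar x))$ for the set $\Omega_1\cap\Omega_2$ of \eqref{omegai}, to split this normal cone by the intersection rule of Theorem~\ref{thm-intersec-rule} (with the qualifications ($q_1$), ($q_2$) doing the work), and finally to translate the two summands back into $\partial f_{\mathcal C_1}(\bar x)$ and $D^*_{\mathcal C_2}G(\bar x,0)(0)$ via the product rules of Theorems~\ref{thm51} and \ref{product-rule2}.

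\textbf{Step 1: from optimality to a normal cone.} Put $\bar w:=(\bar x,0,f(\bar x))$ and $\mathcal D:=(\mathcal C_1\cap\mathcal C_2)\times\{0\}\times\R$; note $\mathcal D$ is closed and convex. I would first observe that a triple $(x,0,z)\in(\Omega_1\cap\Omega_2)\cap\mathcal D$ close to $\bar w$ satisfies $z\ge f(x)$, $0\in G(x)$ and $x\in\mathcal C_1\cap\mathcal C_2$, hence $x$ is feasible for \eqref{mpecs} and so $z\ge f(x)\ge f(\bar x)$. Consequently $\bar w$ minimizes the linear functional $(x,y,z)\mapsto z$ over $(\Omega_1\cap\Omega_2)\cap\mathcal D$ near $\bar w$. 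Arguing exactly as in the proof of Theorem~\ref{tq1-thm2} (the inequality $\langle(0,0,-1),(x,y,z)-\bar w\rangle=-(z-f(\bar x))\le 0$ on $(\Omega_1\cap\Omega_2)\cap\mathcal D$ near $\bar w$, together with $\bar w+p(0,0,-1)\in\mathcal D$ for every $p>0$) gives
$$(0,0,-1)\in\hat N_{\mathcal D}(\bar w,\Omega_1\cap\Omega_2)\subset N_{\mathcal D}(\bar w,\Omega_1\cap\Omega_2).$$

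\textbf{Step 2: splitting off $\Omega_1$ and $\Omega_2$.} Write $\mathcal D=(\mathcal C_1\times\R^{m+1})\cap(\mathcal C_2\times\R^{m+1})\cap(\R^n\times\{0\}\times\R)$. The key point here is that, since the factor $\mathcal R(\bar w,\R^n\times\{0\}\times\R)=\R^n\times\{0\}\times\R$ in the definition of $\hat N_{\mathcal C}$ excludes all directions normal to that subspace, one has $N_{\R^n\times\{0\}\times\R}(\bar w,\R^{n+m+1})=\{0\}$; so the ``$0\in G(x)$'' constraint is carried purely as a with-respect-to set and contributes nothing to the sum. I would then apply Theorem~\ref{thm-intersec-rule} (iterated over the three with-respect-to sets): the limiting qualification condition for $\{\Omega_1,\Omega_2\}$ with respect to $\{\mathcal C_1\times\R^{m+1},\mathcal C_2\times\R^{m+1}\}$ follows from ($q_1$), once rewritten via Proposition~\ref{pro4} as $\partial^\infty_{\mathcal C_1}f(\bar x)=D^*_{\mathcal C_1}\mathcal E^f(\bar x)(0)$ and $D^*_{\mathcal C_1}\mathcal E^f(\bar x)(0)\cap[-D^*_{\mathcal C_2}G(\bar x,0)(0)]=\{0\}$ (using Theorem~\ref{product-rule2}(i) to see that a near-zero sum of Fr\'echet normals to $\Omega_1$ and $\Omega_2$ forces, in the limit, $x_1^*\in\partial^\infty_{\mathcal C_1}f(\bar x)$, $y_2^*=0$ and $x_2^*\in D^*_{\mathcal C_2}G(\bar x,0)(0)$ with $x_1^*+x_2^*=0$); the normal-densedness hypotheses are supplied by ($q_2$) (together with the trivial check for the subspace factor, cf.\ Remark~\ref{rem-slqc}). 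This yields
$$(0,0,-1)\in N_{\mathcal C_1\times\R^{m+1}}(\bar w,\Omega_1)+N_{\mathcal C_2\times\R^{m+1}}(\bar w,\Omega_2).$$

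\textbf{Step 3: identifying the pieces.} By Theorem~\ref{product-rule2}(ii) applied to $\Omega_1=\{(x,y,z)\mid(x,z)\in\epi f,\ y\in\R^m\}$ and $N_{\R^m}(0,\R^m)=\{0\}$, one gets $N_{\mathcal C_1\times\R^{m+1}}(\bar w,\Omega_1)=\{(x^*,0,z^*)\mid(x^*,z^*)\in N_{\mathcal C_1\times\R}((\bar x,f(\bar x)),\epi f)\}$, and $N_{\mathcal C_1\times\R}((\bar x,f(\bar x)),\epi f)=N_{\mathcal C_1\times\R}((\bar x,f(\bar x)),\epi f_{\mathcal C_1})$ since $\hat N_{\mathcal C}$ depends on the set only through its intersection with $\mathcal C$. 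By Theorem~\ref{thm51}(ii) applied to $\Omega_2=\gph G\times\R$, one gets $N_{\mathcal C_2\times\R^{m+1}}(\bar w,\Omega_2)=\{(x^*,y^*,0)\mid(x^*,y^*)\in N_{\mathcal C_2\times\R^m}((\bar x,0),\gph G_{\mathcal C_2})\}$. Matching coordinates in the displayed inclusion forces $z_1^*=-1$, $y_2^*=0$ and $x_1^*+x_2^*=0$, whence $(x_1^*,-1)\in N_{\mathcal C_1\times\R}((\bar x,f(\bar x)),\epi f_{\mathcal C_1})$ so $x_1^*\in\partial_{\mathcal C_1}f(\bar x)\subset\partial f_{\mathcal C_1}(\bar x)$ (the inclusion because $N_{\mathcal C}(\cdot,\Omega)\subset N(\cdot,\Omega\cap\mathcal C)$, as in the proof of Proposition~\ref{pro1}(iv)), while $(x_2^*,0)\in N_{\mathcal C_2\times\R^m}((\bar x,0),\gph G_{\mathcal C_2})$ means $x_2^*\in D^*_{\mathcal C_2}G(\bar x,0)(0)$. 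Thus $0=x_1^*+x_2^*\in\partial f_{\mathcal C_1}(\bar x)+D^*_{\mathcal C_2}G(\bar x,0)(0)$, which is \eqref{thm14-eq0}.

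\textbf{Main obstacle.} The delicate point is the correct encoding of the equilibrium constraint $0\in G(x)$ together with the two geometric constraints so that the coderivative argument remains $0$: this rests on the fact that, in sharp contrast with the classical limiting normal cone, the normal cone of the whole space with respect to $\R^n\times\{0\}\times\R$ is trivial, and on a careful bookkeeping of which set is taken ``with respect to'' which when the product rules (Theorems~\ref{thm51}, \ref{product-rule2}) and the intersection rule (Theorem~\ref{thm-intersec-rule}) are chained together; verifying that ($q_1$)--($q_2$) indeed cover all the qualification hypotheses needed along this chain is where the bulk of the technical care goes.
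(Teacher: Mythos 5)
Your Steps 1 and 3 are fine (indeed, restricting to $\mathcal D:=(\mathcal C_1\cap\mathcal C_2)\times\{0\}\times\R$ is a clean way to transfer local optimality, and the identification of the two summands via Theorems~\ref{thm51} and \ref{product-rule2} matches what the paper does at the end). The genuine gap is Step 2. Theorem~\ref{thm-intersec-rule} estimates $N_{\mathcal C}(\bar w,\Omega_1\cap\Omega_2)$ only for $\mathcal C=\mathcal C_1\cap\mathcal C_2$, the intersection of the two with-respect-to sets attached to $\Omega_1,\Omega_2$; with the sets supplied by ($q_2$) that intersection is $(\mathcal C_1\cap\mathcal C_2)\times\R^{m+1}$, which is strictly larger than your $\mathcal D$, so the theorem does not apply to $N_{\mathcal D}(\bar w,\Omega_1\cap\Omega_2)$ as you use it. You cannot repair this by first replacing $\mathcal D$ with $(\mathcal C_1\cap\mathcal C_2)\times\R^{m+1}$: the vector $(0,0,-1)$ from Step 1 need not belong to $\hat N_{(\mathcal C_1\cap\mathcal C_2)\times\R^{m+1}}(\bar w,\Omega_1\cap\Omega_2)$, because points $(x,y,z)$ of $\Omega_1\cap\Omega_2$ with $y\ne 0$ need not be \ref{mpecs}-feasible and $z$ may then drop below $f(\bar x)$ --- the factor $\{0\}$ in $\mathcal D$ is doing real work. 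Nor does the proposed ``iteration'' work: pairing the subspace $\R^n\times\{0\}\times\R$ with the whole space $\R^{n+m+1}$ and invoking Theorem~\ref{thm-intersec-rule} again would require the pair $\{\Omega_1\cap\Omega_2,\R^{n+m+1}\}$ to be normal-densed in $\{(\mathcal C_1\cap\mathcal C_2)\times\R^{m+1},\R^n\times\{0\}\times\R\}$ at $\bar w$ --- a hypothesis about the intersection $\Omega_1\cap\Omega_2$ itself, which is exactly the object you have no calculus for; it is not implied by ($q_2$), and it is not ``trivial'' (Remark~\ref{rem-slqc}~(ii) needs interior points, while $\R^n\times\{0\}\times\R$ has empty interior). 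The fact that $N_{\R^n\times\{0\}\times\R}(\bar w,\R^{n+m+1})=\{0\}$ is correct but only removes the third summand \emph{after} such a split has been justified; it does not justify the split.

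For comparison, the paper takes a different route which avoids intersecting with the $\{y=0\}$ slice altogether: it reformulates \eqref{mpecs} as minimizing $\tilde f+\delta_{\tgph G}$ over $(\mathcal C_1\cap\mathcal C_2)\times\R^m$, applies Theorem~\ref{tq1-thm2} to get $(0,0)\in D^*_{\mathcal C\times\R^m}\bigl(\mathcal E^{\tilde f}+\mathcal E^{\delta_{\tgph G}}\bigr)((\bar x,0),f(\bar x))(1)$, and then uses the coderivative sum rule (Theorem~\ref{sum-rule}) for the two epigraphical mappings, checking its hypotheses from ($q_1$) (Claim~1), ($q_2$) (Claim~2) and the inner semicontinuity of the auxiliary mapping $S$ (Claim~3), before identifying the pieces with the product rules. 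So the $y$-variable is processed inside the sum rule rather than through a shrunken with-respect-to set. If you want to keep your intersection-based strategy, you would have to either prove a version of Theorem~\ref{thm-intersec-rule} whose reference set may be smaller than $\mathcal C_1\cap\mathcal C_2$ (with an extra qualification replacing the missing normal-densedness), or switch to the paper's sum-rule reduction; as it stands, Step 2 is not covered by the results you cite.
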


\begin{proof}
We first define the function $\tilde f:\R^n\times\R^m\to\bar\R$ by $\tilde f(x,y):=f(x)$ for all $(x,y)\in \R^n\times\R^m$. We consider the following optimization problem:
\begin{align}\label{optim-prob1}
\min \quad & \tilde f(x,y)\\\nonumber
\text{such that } & y\in G(x), x\in\mathcal C_1\cap\mathcal C_2.
\end{align}
It is easy to see that $\bar x$ is a local solution to \eqref{mpecs} if and only if $(\bar x,0)$ is a local solution to \eqref{optim-prob1}. Putting $\Omega:= \gph G$ and $\mathcal C:=\mathcal C_1\cap\mathcal C_2,$ the problem \eqref{optim-prob1} can be expressed as follows:
\begin{align}\label{optim-prob2}
\min \quad & \tilde f(x,y)+\delta_{\Omega}(x,y)\\\nonumber
\text{such that } & (x,y)\in\mathcal C\times \R^m.
\end{align}
By using Theorem~\ref{tq1-thm2}, we derive
\begin{align}
(0,0)\in \partial_{\mathcal C\times\R^m}(\tilde f+\delta_{\Omega})(\bar x,0)&=D_{\mathcal C\times\R^m}^*\mathcal E^{\tilde f+\delta_{\Omega}}((\bar x,0),f(\bar x))(1)\nonumber\\
&=D_{\mathcal C\times\R^m}^*\left(\mathcal E^{\tilde f}+\mathcal E^{\delta_{\Omega}}\right)((\bar x,0),f(\bar x))(1).\label{nec-con-eq3}
\end{align}
We next prove the following two claims:

{\it Claim~1}. $D_{\mathcal C_1\times\R^m}^*\mathcal E^{\tilde f}(\bar x,0,f(\bar x))(0)\cap [-D_{\mathcal C_2\times\R^m}^*\mathcal E^{\delta_{\Omega}}(\bar x,0,0)(0)]=\{(0,0)\}.$ Indeed, let $(x^*,y^*)\in D_{\mathcal C_1\times\R^m}^*\mathcal E^{\tilde f}(\bar x,0,f(\bar x))(0)\cap [-D_{\mathcal C_2\times\R^m}^*\mathcal E^{\delta_{\Omega}}(\bar x,0,0)(0)].$ Then
\begin{equation}\label{nec-con-eq1}
(x^*,y^*,0)\in N_{\mathcal C_1\times\R^{m+1}}\left((\bar x, 0,f(\bar x)),\gph\mathcal E^{\tilde f}\right)\cap [-N_{\mathcal C_2\times\R^{m+1}}\left((\bar x,0,0),\gph\mathcal E^{\delta_{\Omega}}\right)].
\end{equation}
It observes that $\gph \mathcal E^{\tilde f}=\Omega_1.$ Set $\Omega_2':=\gph \mathcal E^{\delta_{\Omega}}=\gph G\times\R_+.$ Then we have
$$
N_{\mathcal C_1\times\R^{m+1}}\left((\bar x,0,f(\bar x)),\Omega_1\right) =\left\{(x^*,y^*,z^*)\mid y^*=0, (x^*,z^*)\in N_{\mathcal C_1\times\R^{m}}((\bar x,f(\bar x)),\epi f)\right\}
$$
and
$$
N_{\mathcal C_2\times\R^{m+1}}\left((\bar x, 0,0),\Omega_2'\right) = N_{\mathcal C_2\times\R^{m}}((\bar x,0),\gph G)\times \R_-.
$$
Taking \eqref{nec-con-eq1} into account, we achieve
$$
(x^*,0)\in N_{\mathcal C_1\times\R^{m}}((\bar x,f(\bar x)),\epi f) \text{ \rm and } (-x^*,0)\in N_{\mathcal C_2\times\R^{m}}((\bar x,0),\gph G),
$$
which imply that
$$
x^*\in \partial_{\mathcal C_1}^{\infty}f(\bar x)\cap [-D_{\mathcal C_2}^*G(\bar x,0)(0)].
$$
This gives us that $x^*=0$ due to the assumption~($q_1$). Thus we obtain Claim~1.

{\it Claim 2}. $\{\tilde\Omega_1,\tilde\Omega_2\}$ are normal-densed in $\{\tilde{\mathcal C_1}\times\R^2,\tilde{\mathcal C_2}\times\R^2\}$ at $(\bar x, 0,f(\bar x),0),$ where $\tilde{\mathcal C_i}:=\mathcal C_i\times\R^m$ for $i=1,2$, and
$$\tilde\Omega_1:=\left\{(x,y,z_1,z_2)\mid (x,y,z_1)\in \gph \mathcal E^{\tilde f},  z_2\in \R\right\}=\Omega_1\times\R,$$
$$\tilde\Omega_2:=\left\{(x,y,z_1,z_2)\mid (x,y,z_2)\in \gph \mathcal E^{\delta_{\Omega}},  z_1\in \R\right\} =\Omega_2\times\R_+.$$
It is not difficult to see that
$$\hat N((x,y,z_1,z_2),\tilde\Omega_1\cap\tilde{\mathcal C_1}\times\R^2) = \hat N((x,y,z_1),\Omega_1\cap\mathcal C_1\times\R^{m+1})\times\{0\},$$
$$\hat N_{\tilde{\mathcal C_1}\times\R^2}((x,y,z_1,z_2),\tilde\Omega_1) = \hat N_{\mathcal C_1\times\R^{m+1}}((x,y,z_1),\Omega_1)\times\{0\},$$ and
$$\hat N((x,y,z_1,z_2),\tilde\Omega_2\cap\tilde{\mathcal C_2}\times\R^2) = \hat N((x,y,z_1),\Omega_2\cap\mathcal C_2\times\R^{m+1})\times\hat N(z_2,\R_+),$$
$$\hat N_{\tilde{\mathcal C_2}\times\R^2}((x,y,z_1,z_2),\tilde\Omega_2) = \hat N_{\mathcal C_2\times\R^{m+1}}((x,y,z_1),\Omega_2)\times \hat N(z_2,\R_+).$$ Moreover, $\mathcal R((x,y,z_1,z_2),\tilde{\mathcal C}\times R^2)=\mathcal R((x,y,z_1),\mathcal C\times\R^{m+1})\times\R.$ By the assumption~($q_2$) that $\{\Omega_1,\Omega_2\}$ are normal-densed in $\{\mathcal C_1,\mathcal C_2\},$ we obtain Claim~2.

{\it Claim~3}. The mapping $S: \R^n\times\R^m\times \R$, defined by
$$
S(x,y,z)=\left\{(z_1,z_2)\mid z_1\in \mathcal E^{\tilde f}(x,y), z_2\in \mathcal E^{\delta_{\Omega}}(x,y), z_1+z_2=z\right\},
$$
is inner semicontinuous with respect to $\mathcal C\times\R^m$ at $(\bar x, 0, f(\bar x), f(\bar x), 0).$ To see this, we take any $(x_k,y_k,z_k)\xrightarrow{{\rm \tiny dom} S\cap(\mathcal C\times\R^{m+1})} (\bar x,0,f(\bar x))$. Then $z_k\in  \mathcal E^{\tilde f}(x_k,y_k)+\mathcal E^{\delta_{\Omega}}(x_k,y_k)$, which means that $z_k\ge \tilde f(x_k,y_k).$ Putting $z_{1k}=z_{k}$ and $z_{2k}=0$, we obtain $z_{1k}\in \mathcal E^{\tilde f}(x_k,y_k)$, $z_{2k}\in \mathcal E^{\delta_{\Omega}}(x_k,y_k)$, and $z_{1k}+z_{2k}=z_k$.

Moreover, we also have $z_{1k}\to f(\bar x)$ and $z_{2k}\to 0.$ By using Claims (1)-(3), and Theorem~\ref{sum-rule}, we find by \eqref{nec-con-eq3} elements $(x_1^*,y_1^*)\in D^*_{\mathcal C_1\times\R^m}\mathcal E^{\tilde f}(\bar x,0,f(\bar x))(1)$ and $(x_2^*,y_2^*)\in D^*_{\mathcal C_2\times\R^m}\mathcal E^{\delta_{\Omega}}(\bar x, 0, 0)(1)$ satisfying
$$
(x_1^*,y_1^*)+(x_2^*,y_2^*)=(0,0),
$$
which gives us the following two relations
\begin{align}
(x_1^*,y_1^*,-1)&\in N_{\mathcal C_1\times\R^{m+1}}((\bar x,0,f(\bar x)),\gph \mathcal E^{\tilde f})
\end{align}
and
\begin{align} (x_2^*,y_2^*,-1)\in N_{\mathcal C_2\times\R^m}((\bar x,0,0),\gph\mathcal E^{\delta_{\Omega}}).
\end{align}
By using Theorem~\ref{product-rule2}~(ii), we have
$$
N_{\mathcal C_1\times\R^{m+1}}((\bar x, 0, f(\bar x)),\gph \mathcal E^{\tilde f})=\{(x^*,y^*,z^*)\mid y^*=0, (x^*,z^*)\in N_{\mathcal C_1\times\R}((\bar x,f(\bar x)),\epi f)\}
$$
and
$$
N_{\mathcal C_2\times\R^{m+1}}((\bar x,0,0),\gph\mathcal E^{\delta_{\Omega}})=N_{\mathcal C_2\times\R^m}((\bar x,0),\gph G))\times\R_-.
$$
Therefore, $x_1^*\in \partial_{\mathcal C_1}f(\bar x)$ and $x_2^*\in D^*_{\mathcal C_2}G(\bar x,0)(0)$, so we derive
$$
0\in \partial_{\mathcal C_1}f(\bar x) +D^*_{\mathcal C_2}G(\bar x,0)(0).
$$
Hence, the proof is completed.
\end{proof}
\begin{remark}\label{rem5}
{\rm (i)} It is really  important to see that, in usually, the necessary optimality condition for   the \ref{mpecs} problem  is ebstablished in the form of (see e.g. \cite{BM07,G13,TC19})
\begin{equation}\label{rem5-eq1}
0\in \tilde\partial f(\bar x)+\tilde D^*G(\bar x,0)(y^*)+N(\bar x,\mathcal C),
\end{equation}
where $y^*$ is an uncertain unit vector and $\tilde\partial, \tilde D^*$ are generalized differentials. However, the inclusion \eqref{rem5-eq1} is implicit because of its dependence on $y^*$. In contrast, the condition \eqref{thm14-eq0} in Theorem~\ref{nec-con} is explicit because $y^*$ is replaced by $0$.

{\rm (ii)} In the case where $G$ satisfies the Aubin property with respect to $\mathcal C_2$ at $(\bar x, 0).$ The necessary optimality condition even depends only on the objective function $f$ but not on $G$. We have the following proposition, which is easy to obtain from  Theorems~\ref{thm1} and \ref{nec-con}.
\end{remark}
\begin{proposition}\label{nec-con-1} Consider the \ref{mpecs} problem. Let $\bar x$ be a local minimizer to \eqref{mpecs}, and let $f_{\mathcal C_1}\in \mathcal F(\bar x)$. Assume that the following qualification conditions are fulfilled:
	
	{\rm($q_1$)} The multifunction $G$ satisfies the Aubin property with respect to $\mathcal C_2$ at $(\bar x,0).$
	
	{\rm($q_2$)} $\{\Omega_1,\Omega_2\}$ are normal-densed in $\{\mathcal C_1\times\R^{m+1},\mathcal C_2\times\R^{m+1}\}$ at $(\bar x,0,f(\bar x)),$ where $\Omega_i$ is given by \eqref{omegai} for $i=1,2$.	
	
	Then we can take the set $D^*_{\mathcal C_2}G(\bar x,0)(0)$ out of \eqref{thm14-eq0}, i.e., we have
	\begin{equation}\label{pro15-eq0}
		0\in \partial f_{\mathcal C_1}(\bar x).
	\end{equation}
\end{proposition}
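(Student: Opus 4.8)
The plan is to derive the proposition directly from Theorem~\ref{nec-con}, after checking that, under hypothesis $(q_1)$, the qualification condition $(q_1)$ of Theorem~\ref{nec-con} holds automatically and the term $D^*_{\mathcal C_2}G(\bar x,0)(0)$ in \eqref{thm14-eq0} collapses to $\{0\}$.

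First I would record the elementary consequences of $\bar x$ being a local minimizer of \ref{mpecs}: it is in particular feasible, so $\bar x\in\mathcal C_1\cap\mathcal C_2$ and $0\in G(\bar x)$, whence $(\bar x,0)\in\gph G$. Moreover, the Aubin property of $G$ with respect to $\mathcal C_2$ around $(\bar x,0)$ assumed in $(q_1)$ includes, by its very definition, that $\gph G$ is locally closed around $(\bar x,0)$. Hence Theorem~\ref{thm1} applies with $F=G$, the closed convex set $\mathcal C_2$, $\bar x\in\mathcal C_2$, and $\bar y=0\in G(\bar x)$, and the assumed Aubin property with respect to $\mathcal C_2$ is equivalent to
$$
D^*_{\mathcal C_2}G(\bar x,0)(0)=\{0\}.
$$

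Next I would verify condition $(q_1)$ of Theorem~\ref{nec-con}, namely $\partial^{\infty}_{\mathcal C_1}f(\bar x)\cap[-D_{\mathcal C_2}^*G(\bar x,0)(0)]=\{0\}$. By the previous step the second factor is $-\{0\}=\{0\}$, while $0\in\partial^{\infty}_{\mathcal C_1}f(\bar x)$ always holds, since $(0,0)$ belongs to every limiting normal cone, in particular to $N_{\mathcal C_1\times\R}((\bar x,f(\bar x)),\epi f_{\mathcal C_1})$. Therefore the intersection equals $\{0\}$, so $(q_1)$ of Theorem~\ref{nec-con} is fulfilled. Condition $(q_2)$ of Theorem~\ref{nec-con} is literally the present hypothesis $(q_2)$ (with the same sets $\Omega_i$ from \eqref{omegai}), and the standing assumption $f_{\mathcal C_1}\in\mathcal F(\bar x)$ is also in force here.

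Finally, applying Theorem~\ref{nec-con} gives $0\in\partial f_{\mathcal C_1}(\bar x)+D^*_{\mathcal C_2}G(\bar x,0)(0)$, and substituting $D^*_{\mathcal C_2}G(\bar x,0)(0)=\{0\}$ yields exactly \eqref{pro15-eq0}. There is essentially no obstacle: the entire content is supplied by Theorems~\ref{thm1} and \ref{nec-con}, and the only point that deserves an explicit line rather than a bare assertion is the membership $0\in\partial^{\infty}_{\mathcal C_1}f(\bar x)$, which is what makes the qualification $(q_1)$ of Theorem~\ref{nec-con} trivially satisfied once $D^*_{\mathcal C_2}G(\bar x,0)(0)=\{0\}$ is known.
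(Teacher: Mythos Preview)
Your proposal is correct and follows exactly the route indicated in the paper, which simply states that the proposition ``is easy to obtain from Theorems~\ref{thm1} and \ref{nec-con}.'' You supply precisely this derivation, and your explicit verification that $0\in\partial^{\infty}_{\mathcal C_1}f(\bar x)$ (so that the intersection in $(q_1)$ of Theorem~\ref{nec-con} is exactly $\{0\}$, not merely contained in $\{0\}$) is a welcome point of rigor that the paper leaves implicit.
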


We now close this section by illustrated examples for Theorem~\ref{nec-con} and Proposition~\ref{nec-con-1}.

\begin{example}
Let us consider the \ref{mpecs} problem with $\mathcal C_1:=\mathcal C_2:=\mathcal C:=[0,\infty)$,
$$
f(x)=x \; \text{ \rm and }\;  G(x):=\begin{cases} \R_+ &\text{ \rm if } x\ge 0;\\
    \emptyset &\text{ \rm otherwise}.\end{cases}
$$
It is easy to see that $\bar x=0$ is the unique local solution to this problem.
Moreover, $G$ satisfies the Aubin property with respect to $\mathcal C$ at $(\bar x,0).$ By Example~\ref{exam1}~(i), the assumption ($q_2$) in Propostion~\ref{nec-con-1} holds. Thus we have $$0\in \partial_{\mathcal C}f(0).$$
By the directly computation, we really have
 $$0\in \partial_{\mathcal C}f(0)=[0,1].$$
\end{example}
The final example is to show that the assumption ($q_2$) is {\it essential} in Theorem~\ref{nec-con} and Proposition~\ref{nec-con-1}.
\begin{example}
	Let us consider the \ref{mpecs} problem with $\mathcal C_1:=\R, \mathcal C_2:=[0,\infty)$,
	$$
	f(x)=x \; \text{ \rm and }\;  G(x):=\begin{cases} \R_+ &\text{ \rm if } x\ge 0;\\
		\emptyset &\text{ \rm otherwise}.\end{cases}
	$$
	It is easy to see that $\bar x=0$ is also the unique local solution to this problem.
	Moreover, $G$ satisfies the Aubin property with respect to $\mathcal C$ at $(\bar x,0).$ By Example~\ref{exam1}~(ii), the assumption ($q_2$) in Propostion~\ref{nec-con-1} fails.
	By the directly computation, we really have
	$$0\notin \partial_{\mathcal C_1}f(0)=\{1\}.$$
\end{example}

\section*{Compliance with Ethical Standards}

{\bf Data Availability Statement}  Not applicable.

\noindent
{\bf Conflict of Interests/Competing Interests}  The authors declare that they have no conflict of interest.

\noindent
{\bf Funding} This article was supported by the National Natural Science Foundation of China
under Grant No.11401152.

\end{document}